\numberwithin{equation}{section}
\theoremstyle{plain}
\newtheorem{theorem}[equation]{Theorem}
\newtheorem{proposition}[equation]{Proposition}
\newtheorem{lemma}[equation]{Lemma}
\newtheorem{conjecture}[equation]{Conjecture}
\newtheorem*{claim*}{Claim}
\newtheorem*{addendum-Thm15}{Addendum to Theorem~\ref{Thm_comparing_RF_weak_form}}
\theoremstyle{remark}
\newtheorem{remark}[equation]{Remark}
\theoremstyle{definition}
\newtheorem{definition}[equation]{Definition}
\newcommand{\acts}{\curvearrowright}
\newcommand{\si}{\sigma}
\newcommand{\wh}{\widehat}
\newcommand{\Ga}{\Gamma}
\newcommand{\C}{{\mathcal C}}
\newcommand{\D}{\partial}
\renewcommand{\H}{\mathbb H}
\newcommand{\M}{{\mathcal M}}
\newcommand{\n}{\mathbb N}
\newcommand{\N}{{\mathcal{N}}}
\newcommand{\R}{\mathbb R}
\renewcommand{\t}{\mathfrak{t}}
\newcommand{\U}{{\mathcal U}}
\newcommand{\Z}{\mathbb Z}
\newcommand{\lb}{\linebreak[1]}
\newcommand{\al}{\alpha}
\newcommand{\be}{\beta}
\newcommand{\ben}{\begin{enumerate}}
\newcommand{\de}{\delta}
\newcommand{\Diff}{\operatorname{Diff}}
\newcommand{\een}{\end{enumerate}}
\newcommand{\Embed}{\operatorname{Embed}}
\newcommand{\eps}{\epsilon}
\newcommand{\face}{\operatorname{Face}}
\renewcommand{\fill}{\operatorname{fill}}
\newcommand{\ga}{\gamma}
\newcommand{\id}{\operatorname{id}}
\newcommand{\isom}{\operatorname{Isom}}
\newcommand{\loc}{\operatorname{loc}}
\newcommand{\lra}{\longrightarrow}
\newcommand{\met}{\operatorname{Met}}
\newcommand{\ol}{\overline}
\newcommand{\om}{\omega}
\newcommand{\partmet}{\operatorname{PartMet}}
\newcommand{\ra}{\rightarrow}
\newcommand{\restr}{\mbox{\Large \(|\)\normalsize}}
\newcommand{\Rm}{\operatorname{Rm}}
\newcommand{\St}{\operatorname{St}}
\newcommand{\ul}{\underline}
\DeclareMathOperator{\Int}{Int}
\DeclareMathOperator{\can}{can}
\DeclareMathOperator{\Ric}{Ric}
\newcommand{\ov}[1]{\overline{#1}}
\def\XXint#1#2#3{{\setbox0=\hbox{$#1{#2#3}{\int}$}
     \vcenter{\hbox{$#2#3$}}\kern-.5\wd0}}
\begin{document}

\author{Richard H. Bamler}
\address{Department of Mathematics, University of California, Berkeley, Berkeley, CA 94720}
\email{rbamler@berkeley.edu}
\author{Bruce Kleiner}
\address{Courant Institute of Mathematical Sciences, New York University,  251 Mercer St., New York, NY 10012}
\email{bkleiner@cims.nyu.edu}
\thanks{The first author was supported by a Sloan Research Fellowship and NSF grant DMS-1611906.
\hspace*{2.38mm} The second author was supported by NSF grants DMS-1405899, DMS-1406394, DMS-1711556, and a Simons Collaboration grant.}

\title[Diffeomorphism groups of $3$-manifolds]{Ricci flow and diffeomorphism groups of $3$-manifolds}

\date{\today}
\maketitle

\begin{abstract}
We complete the proof of the Generalized Smale Conjecture, apart from the case of $RP^3$, and give a new proof of Gabai's theorem for hyperbolic $3$-manifolds.  We use an approach based on Ricci flow through singularities, which applies uniformly to spherical space forms other than $S^3$ and $RP^3$ and hyperbolic manifolds, to prove that the moduli space of metrics of constant sectional curvature is contractible.  As a corollary, for such a $3$-manifold $X$, the inclusion $\isom(X,g)\ra \Diff(X)$ is a homotopy equivalence for any Riemannian metric $g$ of constant sectional curvature.   
\end{abstract}

\tableofcontents

\section{Introduction}
\nocite{cerf}
Let $X$ be a compact connected smooth $3$-manifold.  We let $\Diff(X)$ and $\met(X)$ denote the group of smooth diffeomorphisms of $X$, and the set of Riemannian metrics on $X$, respectively, equipped with their $C^\infty$-topologies.  Our focus in this paper will be on the following conjecture:
\begin{conjecture}[Generalized Smale Conjecture
\cite{smale_mr_review,gabai_smale_conjecture_hyperbolic,rubinstein_et_al}]  \label{conj_gsc} If $g$ is a Riemannian metric of constant sectional curvature $\pm 1$ on $X$, then the inclusion $\isom(X,g)\hookrightarrow\Diff(X)$ is a homotopy equivalence.  
\end{conjecture}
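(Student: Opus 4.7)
My plan is to reduce the conjecture to a contractibility statement about the moduli space of constant curvature metrics, and then attack that statement using Ricci flow through singularities.

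First, I would set up the reduction. Let $K = \pm 1$ and let $\met_K(X) \subset \met(X)$ be the subspace of Riemannian metrics of constant sectional curvature $K$. The group $\Diff(X)$ acts on $\met_K(X)$ by pullback, with stabilizer at $g$ equal to $\isom(X,g)$. By Mostow rigidity in the hyperbolic case and the classification of spherical space forms in the positive case, this action is transitive, so we obtain an identification $\met_K(X) \cong \Diff(X)/\isom(X,g)$. Provided the projection $\Diff(X) \to \met_K(X)$ is a principal $\isom(X,g)$-bundle (which is standard for this kind of slice-theoretic setup), the long exact sequence of homotopy groups reduces Conjecture~\ref{conj_gsc} to showing that $\met_K(X)$ is contractible.

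Next I would try to contract $\met_K(X)$ using Ricci flow. The strategy is to build a continuous one-parameter family of maps $F_s \colon \met(X) \to \met(X)$, $s \in [0,1]$, with $F_0 = \id$ and with $F_1$ a retraction onto $\met_K(X)$. For a metric $g_0 \in \met(X)$, I would run the (normalized) Ricci flow through singularities with initial condition $g_0$; in the hyperbolic case this flow exists smoothly for all time and the rescaled metric converges to the hyperbolic metric on $X$, while in the spherical case (excluding $S^3, RP^3$) one needs singular Ricci flow to pass through the extinction time and extract a limiting round metric. Composing this with the straight-line homotopy in the convex space $\met(X)$ gives a contraction: any loop or higher sphere in $\met_K(X)$ is filled in $\met(X)$ and then pushed back into $\met_K(X)$ by the flow.

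The main technical obstacle, and the heart of the proof, is the \emph{continuous dependence on initial data} of the singular Ricci flow and of the associated rescaling/convergence to a constant curvature metric. Even though a singular Ricci flow starting from $g_0$ is unique, the passage through the finitely many singular times in the spherical case can in principle depend badly on $g_0$; one must show that nearby initial metrics produce singular flows that remain close in an appropriate sense (for example, in the topology on the space of singular Ricci flows developed in earlier work of the authors), and that the eventual round or hyperbolic limit depends continuously on $g_0$. Equally important is to ensure that if $g_0$ is already in $\met_K(X)$ (or even close to it), then the retraction returns it to itself, so that the homotopy actually fixes $\met_K(X)$; this will require a quantitative stability statement for constant curvature metrics under the flow. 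Once these continuity and stability properties are established, combining them with the convex homotopy in $\met(X)$ yields the contractibility of $\met_K(X)$, and hence the Generalized Smale Conjecture in the stated range.
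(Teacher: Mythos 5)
Your reduction to the contractibility of $\met_{K\equiv k}(X)$ is exactly the paper's first step (the fibration $\Diff(X)\to\met_{K\equiv k}(X)$ with fiber $\isom(X,g)$), and your identification of uniqueness/stability of singular Ricci flow as the source of continuous dependence on the initial metric is also on target. The genuine gap is in the middle of your argument: you assert that running the singular Ricci flow and rescaling produces a retraction $F_1:\met(X)\to\met_{K\equiv k}(X)$, i.e.\ a globally defined constant curvature metric on $X$ depending continuously on $g_0$. The flow does not give this. In the spherical case the time slices near the extinction time become round, but to get a metric on $X$ itself one must transport that round metric back to the time-$0$ slice along the time vector field, and the backward flow is only defined on the complement of finitely many points of the late time slice; the resulting object is a constant curvature metric $\check g$ defined only on an open subset $W_g\subset X$ (diffeomorphic to $X$ minus a finite set, but with possibly wild complement in $X$). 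So the flow yields a continuous map $\met(X)\to\partmet(X)$ into partially defined metrics, not into $\met_{K\equiv k}(X)$, and your ``push back into $\met_{K\equiv k}(X)$ by the flow'' step has no construction behind it.

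Bridging that gap is the substantive second half of the paper: given a compact family $p\mapsto(W_{g(p)},\check g(p))$ of partially defined constant curvature metrics (parametrized by $D^{m+1}$, agreeing with the given global metrics over $S^m$), one must extend them, after shrinking $W_{g(p)}$ to a compact core whose complement is a union of $3$-disks, to globally defined constant curvature metrics continuously in $p$. This is done by choosing a fine subdivision of the parameter polyhedron, assigning compatible cores $Z_\si$ to faces, and extending inductively over skeleta; the induction step is an extension problem for families of embeddings of $3$-disks into $S^3$ (or $\H^3$), which is solvable precisely because $\Diff(D^3\,\mathrm{rel}\,\partial D^3)$ is contractible by Hatcher's theorem. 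So the Smale Conjecture for $S^3$ is an essential input that your outline never invokes. A smaller point: you aim for a strong deformation retraction fixing $\met_{K\equiv k}(X)$, which is more than is needed and harder to arrange; the paper only fills spheres in $\met_{K\equiv k}(X)$ through $\met(X)$ to kill homotopy groups, and then uses the CW homotopy type of $\met_{K\equiv k}(X)$ to conclude contractibility.
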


\noindent
Smale's original conjecture was for the case $X=S^3$ \cite{smale_mr_review}.  Cerf proved that the inclusion $\isom(S^3,g)\ra \Diff(S^3)$ induces a bijection on path components \cite{cerf1,cerf2,cerf3,cerf4}, and the full conjecture was proven by Hatcher \cite{hatcher_smale_conjecture}.  Hatcher used a blend of combinatorial and smooth techniques to show that the space of smoothly embedded $2$-spheres in $\R^3$ is contractible.  This  is equivalent to the assertion that $O(4)\simeq \isom(S^3,g) \ra \Diff(S^3)$ is a homotopy equivalence when $g$ has sectional curvature $1$.  Other spherical space forms were studied starting in the late 1970s.  Through the work of a number of authors it was shown that the inclusion $\isom(X)\ra \Diff(X)$ induces a bijection on path components for any spherical space form $X$ \cite{asano,rubinstein_klein_bottles,cappell_shaneson,bonahon,rubinstein_birman,boileau_otal}.  Conjecture~\ref{conj_gsc} was previously known for certain spherical space forms -- those containing geometrically incompressible one-sided Klein bottles (prism and quaternionic manifolds), as well as Lens spaces other than $RP^3$ \cite{ivanov_1,ivanov_2,rubinstein_et_al}.  The conjecture was proven for hyperbolic manifolds by Hatcher and Ivanov in the Haken case \cite{ivanov_haken,hatcher_haken} (extending the earlier work of Waldhausen and Laudenbach \cite{waldhausen,laudenbach}) and by Gabai in general \cite{gabai_smale_conjecture_hyperbolic}.  We recommend \cite[Section 1]{rubinstein_et_al} for a nice discussion of these results and other background on diffeomorphism groups.

In this paper we will use Ricci flow through singularities to prove:
\begin{theorem}
\label{thm_k_equiv_1_contractible}
Let $(X,g)$ be a compact connected Riemannian $3$-manifold of constant sectional curvature $k\in\{\pm 1\}$, other than $S^3$ or $RP^3$, and let  $\met_{K\equiv k}(X)\subset\met(X)$ be the moduli space of Riemannian metrics on $X$ of constant sectional curvature $k$.  Then $\met_{K\equiv k}(X)$ is contractible.
\end{theorem}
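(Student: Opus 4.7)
The plan is to construct a continuous retraction $r \colon \met(X) \to \met_{K\equiv k}(X)$ defined via a normalized Ricci flow through singularities. Since $\met(X)$ is convex and hence contractible, any retract of it is automatically contractible: composing a linear contraction $\met(X) \times [0,1] \to \met(X)$ to a basepoint $g_0 \in \met_{K\equiv k}(X)$ with $r$ yields a contraction of $\met_{K\equiv k}(X)$. So the task reduces to producing such an $r$ with $r|_{\met_{K\equiv k}(X)} = \id$.

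To each $g \in \met(X)$ I would associate a singular Ricci flow $\M_g$ starting at $g$, in the sense of Kleiner--Lott and Bamler--Kleiner, and extract $r(g)$ as a constant-curvature ``limit'' as follows. Under the hypothesis on $X$, the long-time behavior of a suitably normalized $\M_g$ is understood: in the spherical case (excluding $S^3$ and $RP^3$), the flow becomes extinct in finite time and a parabolic blow-down at the extinction time converges to a round metric on $X$; in the hyperbolic case, work of Hamilton--Perelman shows that the volume-normalized flow converges smoothly to a hyperbolic metric on $X$ (the thin part being empty since $X$ is itself hyperbolic). In either case one selects a canonical representative $r(g)\in\met_{K\equiv k}(X)\subset \met(X)$, normalized so that $r(g)=g$ whenever $g$ already has constant curvature $k$.

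The main task is then the continuity of $g \mapsto r(g)$, for which I would develop a parameterized theory of singular Ricci flows: a compact family $\{g_\la\}_{\la \in B}$ should produce a family $\{\M_\la\}$ of singular flows varying continuously in $\la$ in an appropriate equivariant topology, with uniform long-time convergence of the normalized flows on compact subsets of $B$, together with a continuously varying choice of distinguished diffeomorphism of $X$ identifying the limit model with $X$ itself --- so that $r$ lands in $\met_{K\equiv k}(X)$ and not merely in its quotient by $\Diff(X)$.

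The principal obstacle is precisely this continuous dependence across the singular times of the flow: for smooth solutions it is classical, but near neckpinches, at extinction, and through any surgery-like events one needs the full stability and compactness machinery for singular Ricci flows, as well as canonical parameter-dependent tracking of extinction times and normalizing diffeomorphisms. Running the argument for the moduli space, rather than merely for individual metrics, is what forces one to package these ingredients into a genuinely parameterized statement, and this parameterized stability is where the technical heart of the argument must lie.
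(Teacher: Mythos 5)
Your reduction (a retract of the contractible space $\met(X)$ is contractible) is fine, but the construction of the retraction $r$ hides the real difficulty, and at the crucial point your proposal assumes exactly what has to be proved. For a general initial metric $g$, the constant-curvature limit produced by the singular Ricci flow lives on a late-time component $C_t$ of the flow spacetime, not on $X$: there is no canonical diffeomorphism between $C_t$ and the time-$0$ slice. The only natural identification is the flow of the time vector field $\partial_\t$, and by \cite{kleiner_lott_singular_ricci_flows} this is defined only on the complement of a finite set of points of $C_t$; transporting the limit metric back to time $0$ therefore yields only a \emph{partially defined} constant-curvature metric $(W_g,\check g)$ on an open set $W_g\subset X$ whose complement, while topologically a union of punctures' worth of material, can be quite wild as a subset of $X$. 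Your ``continuously varying choice of distinguished diffeomorphism of $X$ identifying the limit model with $X$ itself'' does not exist as stated, and positing it skips the central step.

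What is missing is the mechanism for passing from the partially defined metric to a globally defined one, continuously in the parameter. In the paper this is not done by a retraction of all of $\met(X)$ at all: one only kills homotopy groups, i.e.\ for a map $S^m\to\met_{K\equiv k}(X)$ one extends over $D^{m+1}$ by (i) extending into $\met(X)$, (ii) taking the canonical partially defined metrics $(W_{g(p)},\check g(p))$ (whose continuity in $p$ rests on the uniqueness/stability theory of \cite{bamler_kleiner_uniqueness_stability} together with \cite{knopf_et_al}), and then (iii) solving a parametrized extension problem: after a fine triangulation of $D^{m+1}$ one inductively extends the metrics across a family of $3$-disks covering $X\setminus W_{g(p)}$, skeleton by skeleton. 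Step (iii) is an honest topological input, equivalent to an extension problem for embeddings $D^3\to S^3$ (resp.\ $\H^3$), and it is solved using Hatcher's theorem that $\Diff(D^3\,\mathrm{rel}\,\partial D^3)$ is contractible; the Smale conjecture for $S^3$ is thus an essential ingredient, which your outline never invokes and cannot avoid. (Two smaller inaccuracies: in the hyperbolic case the thin part of the flow need not be empty at finite times even though $X$ is hyperbolic -- one must argue, via incompressibility and the structure theory, that eventually a single hyperbolic component exhausts the flow; and the ``parabolic blow-down at the extinction time'' is not how the limit is brought back to $X$ -- it is the partially defined backward flow of $\partial_\t$ that does this.) So the parameterized stability you identify is indeed needed, but it is not where the proof's technical heart lies; the genuinely missing idea is the partially-defined pullback plus the Hatcher-based parametrized extension.
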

\noindent
By a well-known argument (see Lemma~\ref{lem_structure_met_k}), the contractibility of $\met_{K\equiv k}(X)$ is equivalent to the validity of the Generalized Smale Conjecture for $X$. Hence Theorem~\ref{thm_k_equiv_1_contractible} confirms the Generalized Smale Conjecture for several new infinite families of spherical space forms (tetrahedral, octahedral, and icosahedral manifolds), thereby completing the proof of the Generalized Smale Conjecture, apart from the $RP^3$ case.  It also provides a new proof for the other spherical space forms, and for hyperbolic manifolds.  The proof of Theorem~\ref{thm_k_equiv_1_contractible} exploits Ricci flow through singularities as developed in the papers \cite{kleiner_lott_singular_ricci_flows,bamler_kleiner_uniqueness_stability}, and gives a conceptually simple treatment that works uniformly for all manifolds $X$ as in the theorem.  By contrast, the previously known cases of Theorem~\ref{thm_k_equiv_1_contractible} were established using traditional tools from $3$-manifold topology.   They rely on the presence of certain types of distinguished surfaces:  geometrically incompressible Klein bottles or surfaces obtained from sweepouts in the spherical space form cases \cite{ivanov_1,ivanov_2,rubinstein_et_al}, or canonical solid tori arising from the Gabai's insulator techniques in the hyperbolic case \cite{gabai_smale_conjecture_hyperbolic}.  

The method used in this paper breaks down for $S^3$ and $RP^3$ due to the geometric structure of the ``thin'' part of a Ricci flow through singularities.  We will treat these cases in a separate paper \cite{bamler_kleiner_in_prep} using a more involved approach (still based on Ricci flow). Ricci flow also gives a strategy for analyzing diffeomorphism groups of some other families of $3$-manifolds.  We will discuss this in a forthcoming paper.

We remark that it has been a longstanding question whether it is possible to use techniques from geometric analysis to analyze diffeomorphism groups in dimension $3$.   There are variety of natural variational approaches to studying the space of $2$-spheres in $\R^3$ (or $S^3$) that break down due to the absence of a Palais-Smale condition, because there are too many critical points, or because the natural gradient flow does not respect embeddedness; analogous issues plague other strategies based more directly on diffeomorphisms. Theorem~\ref{thm_k_equiv_1_contractible} is the first instance where techniques from geometric analysis have been successfully applied to the study of diffeomorphism groups of $3$-manifolds.  This success depends crucially on the recent results establishing existence and uniqueness of Ricci flow through singularities for arbitrary initial conditions \cite{kleiner_lott_singular_ricci_flows,bamler_kleiner_uniqueness_stability}.

\bigskip

\begin{figure}
\labellist
\small\hair 2pt
\pinlabel $0$ at -20 137
\pinlabel $\omega(g)$ at -60 1320
\pinlabel $\t$ at 55 1400
\pinlabel $t$ at -30 1150
\pinlabel {$(X,g)$} at 1300 145
\pinlabel $W_g$ at 790 60
\pinlabel $C_t$ at 600 1130
\pinlabel $S_t$ at 970 1170
\pinlabel $\partial_{\t}$ at 1160 440
\endlabellist
\centering
\includegraphics[width=105mm]{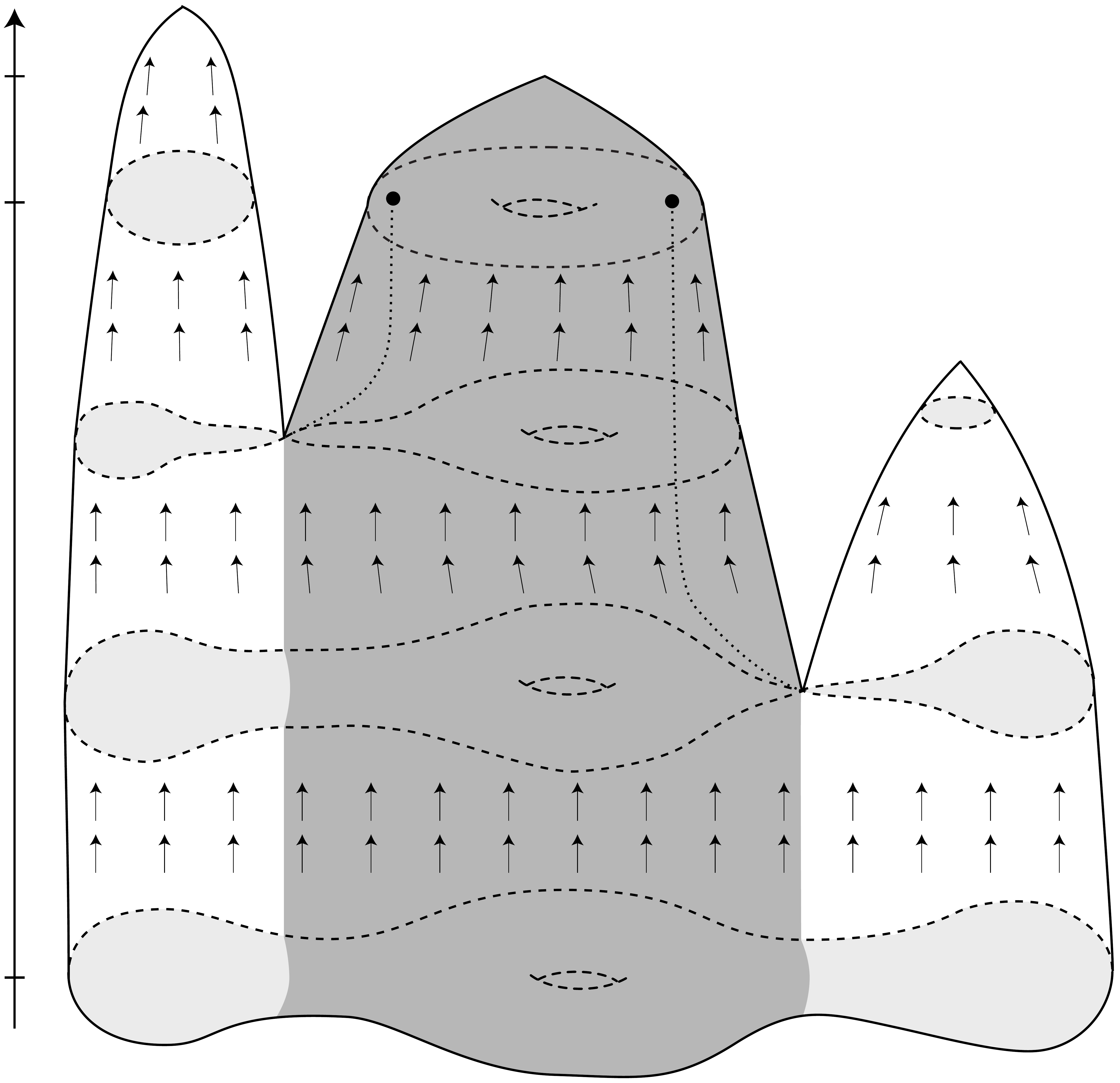}
\caption{A Ricci flow spacetime $\M$ (aka singular Ricci flow) with initial data $(X, g)$.
The time function $\t$ is expressed as height function and the arrows indicate the time vector field $\partial_\t$.
The dashed level sets indicate time slices (the second and third time slices are singular). 
For all $t < \omega(g)$ the time-$t$-slice contains exactly one component $C_t$ that is not diffeomorphic to a sphere.
The flow of $\partial_\t$ restricted to $W$ is defined on the time interval $[0, \omega(g))$.
The union of these trajectories is shaded in dark gray.
For each $t$ the trajectory of $\partial_\t$ starting from each but a finite set of points $S_t \subset C_t$ intersects the time-$0$-slice.
These points are drawn as dark dots and their trajectories, which cease to exist at a singular time, as dotted curves.
\label{fig_spacetime}}
\end{figure}
\subsection*{Informal sketch of proof} Let $X$ be as in the statement of Theorem~\ref{thm_k_equiv_1_contractible}.  To simplify notation we will focus on the case in which $X$ is a spherical space form; at the end we comment on the modifications needed for the hyperbolic case.  Thus our aim is to show that $\met_{K\equiv 1}(X)$ is contractible, which reduces to showing that all of its homotopy groups are trivial.  

Let $g_X\in \met_{K\equiv 1}(X)$ be a reference metric. It is a classical fact that any two metrics in $\met_{K\equiv 1}(X)$ are isometric.

Before proceeding, we first recall some the properties of Ricci flow through singularities, as established in \cite{kleiner_lott_singular_ricci_flows,bamler_kleiner_uniqueness_stability}. We keep the discussion informal, and refer the reader to Section~\ref{sec_prelim} for precise definitions and references.  

For every $g\in\met(X)$, there exists a singular Ricci flow  with initial data $(X,g)$.  This is a Ricci flow spacetime, i.e. a $4$-manifold $\M$ equipped with a time function $\t$ and time vector field $\D_\t$, as well as a Riemannian metric along time slices that satisfies the Ricci flow equation.  Locally, the spacetime looks like an a piece of a Ricci flow defined for a short time in some open subset of $U\subset\R^3$, and the trajectories of the time vector field $\D_\t$ correspond to the spacetime tracks of points that are motionless.  The time-$t$ slice $\M_t$ of $\M$ is the result of evolving the metric $(X,g)$ under Ricci flow for a duration $t$.  For small $t\geq 0$ this corresponds to the usual Ricci flow, but singularities may develop subsequently, which can result in noncompact time slices.  Although the structure of the spacetime may be rather complicated, it still has good topological and geometric properties:
\begin{enumerate}[label=(\alph*)]
\item For every $t$, at most one connected component of the time slice $\M_t$ is diffeomorphic to $X$ with possibly finitely many punctures,  while the remaining components are topologically trivial -- copies of $S^3$ with possibly finitely many punctures.  
\item There is a $T<\infty$, depending only on bounds on the geometry of $g$, such that $\M_t=\emptyset$ for $t\geq T$.  We establish this fact using the extinction results from \cite{colding_minicozzi_extinction,perelman_extinction}, and the fact that $X$ is prime and not aspherical.
\item Let $\om(g)\in (0,T]$ be the supremum of the times $t\in [0,\infty)$ for which $\M_t$ has a topologically nontrivial component.  As $t\ra \om(g)$, the time slice $\M_t$ has a unique component $C_t$ diffeomorphic to $X$, and $C_t$ becomes asymptotically round as $t\ra\om(g)$, i.e. the family of Riemannian manifolds $(C_t)_{t<\om(g)}$ converges, modulo rescaling, to $(X,g_X)$.
\een 
We remark that assertion (c) is based on rigidity properties of $\kappa$-solutions (the geometric models for the large curvature part of $\M$), and it makes use of the assumption that $X$ is not diffeomorphic to $S^3$ or $RP^3$ in order to exclude more complicated geometric behavior as $t\ra \om(g)$. 

Now consider a time $t<\om(g)$ close to $\om(g)$.  By \cite{kleiner_lott_singular_ricci_flows}, there is a finite subset $S_t\subset C_t$ such that the entire complement $C_t\setminus S_t$ lies in the domain of the time-$(-t)$-flow of the time vector field.  In other words, every point in $C_t\setminus S_t$ lies on a trajectory of the time vector field starting from the time-$0$ slice $\M_0=(X,g)$.    Hence we may pushforward the metric on $C_t\setminus S_t$ under the flow, to obtain a Riemannian metric $\check g_t$ on open subset $W_g(t)$ of the time-$0$ slice $\M_0=X$.  We prove that $\check g_t$ converges, modulo rescaling, to a metric $\check g\in \met_{K\equiv 1}(W_g)$, where $W_g\subset X$ is an open subset, and the Riemannian manifold $(W_g,\check g)$ is isometric $(X\setminus S_g,g_X)$ for some finite set $S_g$.  

To summarize, using singular Ricci flow we have taken an arbitrary Riemannian metric $g\in \met(X)$, and produced a Riemannian metric of constant sectional curvature $1$, albeit one defined only on some open subset $W_g\subset X$.  We point out that although $W_g$ might in principle be rather wild, it still contains the interesting topology of $X$ because it is diffeomorphic to $X\setminus S_g$ for some finite set $S_g\subset X$.

Note that if $g$ has constant sectional curvature $1$, then $\M$ corresponds to an ordinary Ricci flow and we have $C_t=\M_t$, $W_g(t)=X$ for all $t<\om(g)$, and $(W_g,\check g)=(X,g)$.

Since the singular Ricci flow $\M$ is unique up to isometry \cite{bamler_kleiner_uniqueness_stability}, the partially defined constant curvature metric $(W_g,\check g)$ is canonically attached to $g$.  Furthermore, using the stability theorem \cite{bamler_kleiner_uniqueness_stability} and \cite{knopf_et_al}, we  show that $(W_g,\check g)$ depends continuously on $g$, in an appropriate sense.  

We now return to the task of showing that the homotopy groups of $\met_{K\equiv 1}(X)$ are trivial.

Pick $m\geq 0$, and consider a (continuous) map $h:S^m\ra \met_{K\equiv 1}(X)$.  Our goal is to extend $h$ to a map $\hat h:D^{m+1}\ra \met_{K\equiv 1}(X)$.  Since $\met(X)$ is contractible, there is an extension $g:D^{m+1}\ra \met(X)$   of the composition $S^m\stackrel{h}{\ra}\met_{K\equiv 1}(X)\hookrightarrow\met(X)$.  

For every $p\in D^{m+1}$, let $(W_{g(p)},\check g(p))$ be the partially defined metric described in the preceding paragraphs.  
Note that $(W_{g(p)},\check g(p))=(X,g(p))$ when $p\in S^m$.
 To complete the proof, we show that after shrinking $W_{g(p)}$ slightly, one can extend $\check g(p)$ to a metric $\wh h(p)$ with sectional curvature $1$ defined on all of $X$, where $\wh h(p)$ depends continuously on $p$.  

We now give an indication of the extension process.  Pick $p\in D^{m+1}$.  Since $W_{g(p)}$ is diffeomorphic to $X\setminus S_{g(p)}$ and $X$ is irreducible, there is a compact domain with boundary $Z_p\subset W_p$, such that the closure $\ol{X\setminus Z_p}$ is a finite disjoint collection of closed $3$-disks.  We would like to extend the restriction $\check g(p)\restr Z_p$ across each of the $3$-disk components of $\ol{X\setminus Z_p}$ to obtain $\wh h(p)\in\met_{K\equiv 1}(X)$.   Pick one such $3$-disk $D$.  It is not hard to see that the extension problem is equivalent to an extension problem for embeddings: for a suitable open neighborhood $U$ of the boundary $\D D$ in $D$, one is given a smooth embedding of $U$ into the round $3$-sphere, and one has to extend this to an embedding $D\ra S^3$. Hatcher's theorem \cite{hatcher_smale_conjecture} implies that this problem has a contractible solution set.   To handle the full extension problem, we take a suitable fine triangulation of $D^{m+1}$, and carry out a parametrized analog of this extension procedure,  by induction over the skeleta.  
This is similar in spirit to an argument using obstruction theory, where the obstruction group is trivial.

We now discuss the hyperbolic case.   Suppose $X$ is a hyperbolic manifold, and pick a hyperbolic metric $g_X\in\met_{K\equiv -1}(X)$; for simplicity we assume here that $X$ is orientable.  

Any $g\in\met(X)$ can be evolved into a singular Ricci flow $\M$ as before.  Its properties are similar to those in the spherical space form case, except that assertions (b) and (c) have to be modified: for every $t\in [0,\infty)$ there is a unique component $C_t$ of $\M_t$ that is diffeomorphic to a punctured copy of $X$, and as $t\ra \infty$ the family of Riemannian manifolds $(C_t)_{t<\infty}$ converges, modulo rescaling, to $(X,g_X)$.  Proceeding as before we use this to construct a canonical partially defined metric $(W_g,\check g)$ with sectional curvature $-1$, where $W_g\subset X$, and $(W_g,\check g)$ is isometric to $(X\setminus S_g,g_X)$ for some finite subset $S_g\subset X$.  The rest of the proof is essentially the same as for spherical space forms.

\bigskip
\begin{remark}
We point out that one may use singular Ricci flow to show that any two metrics $g(0),g(1)\in\met_{K\equiv 1}(X)$ are isometric, without appealing to Reidemeister or Whitehead torsion \cite{milnor_whitehead_torsion}.  (Of course the Ricci flow proof is vastly more complicated than proofs using torsion, since it invokes Perelman's work as well as \cite{bamler_kleiner_uniqueness_stability}.)  The idea is as follows.  Let $g:[0,1]\ra \met(X)$ be a path from $g(0)$ to $g(1)$.  For every $p\in [0,1]$, we let $\M^p$ be the singular Ricci flow with $\M^p_0=(X,g_t)$.  As explained in the sketch above, the spacetime $\M^p$ contains a family $\{C^p_t\}$ of time slices that become asymptotically round as $t\ra \om(g(p))$.  This may be used to construct a family $\{(\ov C^p,\ov g(p))\}_{p\in [0,1]}$ of compact Riemannian manifolds with constant sectional curvature $1$  which interpolates between $(X,g(0))$ and $(X,g(1))$ and which varies continuously in the smooth topology on Riemannian manifolds.  Therefore the set of isometry classes of such metrics,  equipped with the smooth topology on Riemannian manifolds, is connected.  On the other hand, one knows that the space of isometry classes is finite: this follows from the isometric classification of spherical space forms, or alternatively, from a simple general argument based on the finiteness of the set of irreducible representations of a finite group.  Hence it contains a single point.

The same remark also applies to the hyperbolic case -- using singular Ricci flow one can give a new proof of Mostow rigidity assuming only local rigidity of hyperbolic metrics (in the appropriate form).  However, to carry this out one would have to modify the existing large-time analysis slightly so that it only invokes local rigidity rather than Mostow-Prasad rigidity.
\end{remark}

\section{Preliminaries}
\label{sec_prelim}

\subsection{Spaces of maps and metrics}
If $M$, $N$ are smooth manifolds with boundary, we let  $\Embed(M,N)$  denote the set of smooth embeddings $M\ra N$ equipped with the $C^\infty_{\loc}$-topology.

If $M$ is a smooth manifold, we let $\met(M)$ denote the set of smooth Riemannian metrics on $M$ equipped with the $C^\infty_{\loc}$-topology.  For $k\in \R$, we let $\met_{K\equiv k}(M)$ be the subspace of metrics with constant sectional curvature $k$.

We will need the following consequence of the Smale Conjecture \cite{hatcher_smale_conjecture}.

\begin{lemma}
\label{lem_embedding_restriction_fiber_bundle}
Let $\Embed_+(D^3,S^3)\subset \Embed(D^3,S^3)$ be the subset of orientation-preserving embeddings, and let 
$$
\pi:\Embed_+(D^3,S^3)\lra\Embed(S^2,S^3)
$$
be the map induced by restriction.  Then:
\begin{enumerate}[label=(\alph*)]
\item 
$\pi:\Embed_+(D^3,S^3)\lra\Embed(S^2,S^3)$ is a fiber bundle with contractible fiber.
\item Let $m\geq 0$.  Suppose $\phi_{m+1}:D^{m+1}\ra \Embed(S^2,S^3)$ is a continuous map and $\wh\phi_m:S^m\ra \Embed_+(D^3,S^3)$ is a lift of $\phi_{m+1}\restr S^m$, i.e. $\pi\circ \wh\phi_m=\phi_{m+1}\restr S^m$.  Then there is an extension $\wh\phi_{m+1}:D^{m+1}\ra \Embed_+(D^3,S^3)$ of $\wh\phi_m$ that is a lift of $\phi_{m+1}$.
\een
\end{lemma}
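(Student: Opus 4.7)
The plan is to prove (a) by identifying each fiber of $\pi$ with $\Diff(D^3\text{ rel }\partial D^3)$ and invoking Hatcher's theorem, then to deduce (b) by a standard fiber-bundle lifting argument.

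For (a), I fix $e\in\Embed(S^2,S^3)$. The smooth Schoenflies theorem yields two closed smooth $3$-disks $B_\pm\subset S^3$ with common boundary $e(S^2)$; orienting $S^3$, I label them so that orientation-preserving extensions of $e$ map into $B_+$. Any $f\in\pi^{-1}(e)$ has image a compact connected $3$-submanifold of $S^3$ with boundary $e(S^2)$, forcing $f(D^3)=B_+$, so $f$ is a diffeomorphism onto $B_+$ extending $e$. Fixing one $f_0\in\pi^{-1}(e)$, post-composition $\psi\mapsto f_0\circ\psi$ identifies $\Diff(D^3\text{ rel }\partial D^3)$ with $\pi^{-1}(e)$, and Hatcher's Smale conjecture~\cite{hatcher_smale_conjecture} guarantees this space is contractible. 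For local triviality near $e_0\in\Embed(S^2,S^3)$, I would use a tubular neighborhood of $e_0(S^2)$ and a parametrized isotopy extension argument to produce, on a $C^\infty_{\loc}$-open neighborhood $U$ of $e_0$, a continuous map $\Phi\colon U\to\Diff(S^3)$ with $\Phi(e_0)=\id$ and $\Phi(e)\circ e_0=e$ for all $e\in U$. Then $(e,f)\mapsto\Phi(e)\circ f$ is a bundle chart $U\times\pi^{-1}(e_0)\to\pi^{-1}(U)$ compatible with $\pi$.

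For (b), having (a), the pullback bundle $\phi_{m+1}^{\ast}\Embed_+(D^3,S^3)\to D^{m+1}$ is a fiber bundle with contractible fiber $F$ over a contractible paracompact base, hence is trivializable. Under any such trivialization, the partial lift $\wh\phi_m$ becomes a continuous map $S^m\to F$, which extends to $D^{m+1}$ since $F$ is contractible; translating back through the trivialization yields the required $\wh\phi_{m+1}$.

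The only deep input is Hatcher's Smale conjecture, which supplies the contractibility of $\Diff(D^3\text{ rel }\partial D^3)$. Once that is granted, the main technicality is verifying local triviality of $\pi$ in the $C^\infty_{\loc}$-topology; this is a standard parametrized isotopy extension argument, but it is the one place where care is needed to ensure the trivializing ambient diffeomorphisms $\Phi(e)$ depend continuously on $e$ in the correct topology on these infinite-dimensional embedding spaces.
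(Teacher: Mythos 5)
Your proposal is correct and takes essentially the same approach as the paper: local triviality of $\pi$ via a parametrized isotopy-extension map into $\Diff(S^3)$, identification of the fiber with $\Diff(D^3\ \mathrm{rel}\ \partial D^3)$, which is contractible by Hatcher, and then lifting over $D^{m+1}$ using the contractible fiber. Your extra details (the Schoenflies argument pinning down the image of an extension, and the explicit pullback-bundle trivialization in (b)) merely spell out steps the paper leaves implicit.
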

\begin{proof}
The fact that $\pi$ is a fiber bundle is a standard consequence of isotopy extension, which we briefly recall (see \cite{rubinstein_et_al}).  

Given $f_0\in \Embed(S^2,S^3)$ there is an open neighborhood $N(f_0)$ of $f_0$ and a continuous map 
$$
\Phi:N(f_0) \ra \Diff(S^3)
$$
such that for all $f\in N(f_0)$ we have $f=\Phi(f)\circ f_0$.  The map $\Phi$ may be obtained by constructing a locally defined isotopy near $f_0(S^2)$ using normal exponential maps, and then gluing this to the identity map with a partition of unity.  Letting $F:=\pi^{-1}(f_0)$, we obtain a bundle chart $N(f_0) \times F\ra \pi^{-1}(N(f_0))$ for $\pi$ by sending $(f,\phi)$ to $\Phi(f)\circ \phi$.

By \cite[p.604]{hatcher_smale_conjecture}, the subset
$$
\Diff(D^3rel\D D^3):=\left\{\al\in\Diff(D^3) \mid \al\restr_{S^2}=\id_{S^2}\right\}
$$
is contractible.  If $\phi_0\in F$, then we obtain a homeomorphism 
$$
\Diff(D^3rel\D D^3):=\left\{\al\in\Diff(D^3)\mid \al\restr_{S^2}=\id_{S^2}\right\}\lra F
$$
by sending $\al$ to $\phi_0\circ \al$.  Hence $F$ is also contractible.   Thus assertion (a) holds.

Since $\pi$ is a fiber bundle with contractible fiber, any map can be lifted relative to its boundary.  Hence assertion (a) holds.
\end{proof}

The following is well-known:
\begin{lemma}
\label{lem_structure_met_k}
Let $X$ be a compact connected $3$-manifold, and $g_X\in\met_{K\equiv k}(X)$ for $k\in \{\pm 1\}$.  Then:
\begin{itemize}
\item There is a fibration $\Diff(X)\ra \met_{K\equiv k}(X)$ with fiber homeomorphic to $\isom(X, \linebreak[1] g_X)$.
\item $\Diff(X)$ and $\met_{K\equiv k}(X)$ are homotopy equivalent to CW complexes.
\item $\met_{K\equiv k}(X)$ is contractible if and only if the inclusion $\isom(X,g_X)\ra \Diff(X)$ is a homotopy equivalence.
\end{itemize}
\end{lemma}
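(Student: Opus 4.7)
The plan is to consider the pullback map $\Phi:\Diff(X)\to\met(X)$ defined by $\Phi(\phi)=\phi^* g_X$, and identify $\met_{K\equiv k}(X)$ with the orbit of $g_X$ under this action. The image of $\Phi$ lies in $\met_{K\equiv k}(X)$, and one first establishes surjectivity onto $\met_{K\equiv k}(X)$ via the classical rigidity of constant curvature metrics in dimension three: for $k=1$, the isometric classification of spherical space forms shows that any two such metrics on $X$ present it as $S^3/\Gamma$ for conjugate embeddings $\Gamma\hookrightarrow O(4)$, hence are isometric; for $k=-1$ this is Mostow--Prasad rigidity. The fiber of $\Phi$ over $g_X$ is $\isom(X,g_X)$, which is a compact Lie group by the Myers--Steenrod theorem. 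To promote $\Phi$ to a fiber bundle, set $G:=\isom(X,g_X)$ and observe that $G$ acts freely and smoothly on the Fréchet manifold $\Diff(X)$ by right composition, with orbit space identified with $\met_{K\equiv k}(X)$. The existence of local sections, and hence the principal $G$-bundle structure, is a standard consequence of Ebin's slice theorem applied to the $\Diff(X)$-action on $\met(X)$.

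For the second bullet, the foundational results of Palais on $C^\infty$ function spaces, extended to diffeomorphism groups by Burghelea--Lashof, imply that $\Diff(X)$ with its $C^\infty$-topology has the homotopy type of a CW complex. Since $G$ is a compact Lie group and thus a finite-dimensional CW complex, the principal bundle from the first bullet implies that the base $\met_{K\equiv k}(X)\simeq \Diff(X)/G$ also has CW homotopy type.

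For the third bullet, the long exact sequence of homotopy groups for the fibration
$$G\longrightarrow \Diff(X)\longrightarrow \met_{K\equiv k}(X)$$
shows that $\met_{K\equiv k}(X)$ is weakly contractible if and only if the inclusion $\isom(X,g_X)\hookrightarrow\Diff(X)$ induces an isomorphism on all homotopy groups. Combined with the CW homotopy types from the previous step and Whitehead's theorem, this upgrades to the asserted equivalence between contractibility of $\met_{K\equiv k}(X)$ and homotopy equivalence of the inclusion. The only delicate ingredient is the slice-type theorem producing the principal bundle structure in the infinite-dimensional Fréchet setting, but Ebin's slice theorem is tailor-made for precisely this situation and is well established in the literature on diffeomorphism groups.
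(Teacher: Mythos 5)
Your proposal is correct and follows essentially the same route as the paper: rigidity of constant curvature metrics gives transitivity of the pushforward/pullback action of $\Diff(X)$, the orbit map gives a fibration with fiber $\isom(X,g_X)$, and the conclusion follows from the long exact sequence together with CW homotopy types and Whitehead's theorem. The only cosmetic difference is in the supporting details: you obtain the bundle structure from Ebin's slice theorem and the CW type of the base from the principal bundle, whereas the paper simply notes the orbit-space description exhibits $\met_{K\equiv k}(X)$ as a separable Fr\'echet manifold (hence of CW type); both are standard justifications of the same sketch.
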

\begin{proof}[Sketch of proof]
The metric $g_X$ is unique up to isometry; this follows from Mostow rigidity when $k=-1$, and by the isometric and smooth classification of spherical space forms when $k=1$.  Therefore the action $\Diff(X)\acts\met_{K\equiv k}(X)$ by pushforward is transitive, with stabilizer $\isom(X,g_X)$.  The space $\met_{K\equiv k}(X)$ is then homeomorphic to the orbit space $\Diff(X)/\isom(X,g_X)$, and we have a fibration $\Diff(X)\ra\met_{K\equiv k}(X)$ with fiber homeomorphic to $\isom(X,g_X)$.  The diffeomorphism group $\Diff(X)$ is a Frechet manifold that is locally diffeomorphic to the space $\Ga_{C^\infty}(TX)$ of $C^\infty$ vector fields on $X$.  Using the orbit space representation, one gets that $\met_{K\equiv k}(X)$ is a separable Frechet manifold modelled on a finite codimension closed subspace of $\Ga_{C^\infty}(TX)$.  Hence both spaces have the homotopy type of CW complexes.   Finally, using the exact homotopy sequence of the fibration $\Diff(X)\ra\met_{K\equiv k}(X)$, we get  
\begin{align*}
&\qquad\quad\met_{K\equiv k}(X)\text{ is  contractible}\\
&\iff \met_{K\equiv k}(X)\text{ is weakly contractible}\\
&\iff\text{ The inclusion $\isom(X,g_X)\ra\Diff(X)$ is a weak homotopy equivalence}\\
&\iff\text{ The inclusion $\isom(X,g_X)\ra\Diff(X)$ is a homotopy equivalence.}
\end{align*}
\end{proof}

We will also work with the collection of Riemannian metrics defined on different subsets of a given manifold.

\begin{definition}[Topology on partially defined metrics]
\label{def_topology_partially_defined_metrics}
Let $M$ be a smooth manifold, and let $\partmet(M)$ be the set of partially defined Riemannian metrics on $M$, i.e. the set of pairs $(U,h)$ where $U\subset M$ is open and $h$ is a smooth Riemannian metric on $U$.    We topologize $\partmet(M)$ as follows.  For every $(U_0,h_0)\in\partmet(M)$,  $K\subset U_0$ compact, $k<\infty$, and $\eps>0$,  we let
\begin{align*}
\U(U_0,h_0,K,k,\eps):=
\{ (U,h)\in &\partmet(X) \mid K\subset U,\; \|(\nabla_{h_0}^j(h-h_0))(x)\|_{h_0}<\eps\\
&\text{ for all }x\in K, \;j\leq k \}\,.
\end{align*}
The collection of all such subsets $\U(U_0,h_0,K,k,\eps)$  is a basis for the topology on $\partmet(X)$. 
\end{definition}

\bigskip
Note that if $Z$ is a metric space, then in order to verify that a map $Z\ni z\mapsto (U(z),g(z))\in \partmet(M)$ is continuous, it suffices to show that if $z_j\ra z_\infty\in Z$, then for every compact subset $K\subset U(z_\infty)$ we have $K\subset U(z_j)$ for large $j$, and $\nabla^k_{g(z_\infty)}(g(z_j)-g(z_\infty))\ra 0$ uniformly on $K$.

\begin{remark}
The topology on $\partmet(M)$ has the somewhat alarming property of being non-Hausdorff.  This is due to the fact that it formalizes the lower semicontinuous dependence of the open set $U$.  It may be compared to the non-Hausdorff topology on $\R$ generated by the set of open rays $\{(a,\infty)\}_{a\in\R}$, which may be used to characterize lower semicontinuous real-valued functions $X\ra \R$.
\end{remark}

%\bigskip

\subsection{Closeness and convergence of Riemannian manifolds}
We now recall notions of closeness for Riemannian manifolds.
\begin{definition}[Geometric closeness] \label{def_geometric_closeness_time_slice}
We say that a pointed Riemannian manifold $(M, g, x)$ is \textbf{$\eps$-close} to another pointed Riemannian manifold $(\ov{M}, \ov{g}, \ov{x})$ \textbf{at scale $\lambda > 0$} if there is a diffeomorphism onto its image
\[ \psi : B^{\ov{M}} (\ov{x}, \eps^{-1} ) \longrightarrow M \]
such that $\psi (\ov{x}) = x$ and
\[ \big\Vert \lambda^{-2} \psi^* g - \ov{g} \big\Vert_{C^{[\eps^{-1}]}(B^{\ov{M}} (\ov{x}, \eps^{-1} ))} < \eps. \]
Here the $C^{[\eps^{-1}]}$-norm  of a tensor $h$ is defined to be the sum of the $C^0$-norms of the tensors $h$, $\nabla^{\ov{g}} h$, $\nabla^{\ov{g},2} h$, \ldots, $\nabla^{\ov{g}, [\eps^{-1}]} h$ with respect to the metric $\ov{g}$.  We say that 
$(M, g, x)$ is \textbf{$\eps$-close} to $(\ov{M}, \ov{g}, \ov{x})$ if it is $\eps$-close at scale $1$.  We have analogous notions for (unpointed) Riemannian manifolds: \textbf{$(M,g)$ is $\eps$-close to $(\ov M,\ov g)$} if there is a diffeomorphism $\psi:\ov M\ra  M$ such that
\[ \big\Vert  \psi^* g - \ov{g} \big\Vert_{C^{[\eps^{-1}]}(\ov M)} < \eps. \]
\end{definition}
The notion of closeness provides a notion of convergence of sequences (or families) of Riemannian manifolds, in the usual way.

\bigskip\bigskip

\subsection{Ricci flow spacetimes}
We now recall the properties of singular Ricci flows that will be essential in this paper.  We refer the reader to \cite{kleiner_lott_singular_ricci_flows,bamler_kleiner_uniqueness_stability} for more details.

\bigskip
\begin{definition}[Ricci flow spacetimes] \label{def_RF_spacetime}
A {\bf Ricci flow spacetime (starting at time $a\in \R$)}  is a tuple $(\M, \lb \mathfrak{t}, \lb \partial_{\mathfrak{t}}, \lb g)$ with the following properties:
\begin{enumerate}[label=(\arabic*)]
\item $\M$ is a smooth $4$-manifold with (smooth) boundary $\partial \M$.
\item $\mathfrak{t} : \M \to [a, \infty)$ is a smooth function without critical points (called {\bf time function}).
For any $t \geq a$ we denote by $\M_t:=\mathfrak{t}^{-1} (t) \subset \M$ the {\bf time-$t$-slice} of $\M$.
\item We have $\M_a = \mathfrak{t}^{-1} (a) = \partial \M$, i.e. the initial time-slice is equal to the boundary of $\M$.
\item $\partial_{\mathfrak{t}}$ is a smooth vector field (the {\bf time vector field}), which satisfies $\partial_{\mathfrak{t}} \mathfrak{t} \equiv 1$.
\item $g$ is a smooth inner product on the spatial subbundle $\ker (d \mathfrak{t} ) \subset T \M$.
For any $t \geq a$ we denote by $g_t$ the restriction of $g$ to the time-$t$-slice $\M_t$ (note that $g_t$ is a Riemannian metric on $\M_t$).
\item $g$ satisfies the Ricci flow equation: $\mathcal{L}_{\partial_\mathfrak{t}} g = - 2 \Ric (g)$.
Here $\Ric (g)$ denotes the symmetric $(0,2)$-tensor on $\ker (d \mathfrak{t} )$ that restricts to the Ricci tensor of $(\M_t, g_t)$ for all $t \geq a$.
\end{enumerate}
For any interval $I \subset [a,\infty)$ we also write $\M_{I} = \mathfrak{t}^{-1} (I)$ and call this subset the {\bf time-slab} of $\M$ over the time interval $I$.  
Curvature quantities on $\M$, such as the Riemannian curvature tensor $\Rm$, the Ricci curvature $\Ric$, or the scalar curvature $R$ will refer to the corresponding quantities with respect to the metric $g_t$ on each time-slice.
Tensorial quantities will be embedded using the splitting $T\M = \ker (d\mathfrak{t} ) \oplus \langle \partial_{\mathfrak{t}} \rangle$.

Unless otherwise specified, we will implicitly take $a=0$.  
When there is no chance of confusion, we will usually abbreviate the tuple $(\M, \mathfrak{t}, \partial_{\mathfrak{t}}, g)$ by $\M$.
\end{definition}

\begin{definition}[Survival] \label{def_points_in_RF_spacetimes}
Let $(\M, \mathfrak{t}, \partial_{\mathfrak{t}}, g)$ be a Ricci flow spacetime and $x \in \M$ be a point.
Set $t := \mathfrak{t} (x)$.
Consider the maximal trajectory $\gamma_x : I \to \M$, $I \subset [0, \infty)$ of the time-vector field $\partial_{\mathfrak{t}}$ such that $\gamma_x (t) = x$.
Note that then $\mathfrak{t} (\gamma_x(t')) = t'$ for all $t' \in I$.
For any $t' \in I$ we say that $x$ \textbf{survives until time $t'$}, and we write 
\[ x(t') := \gamma_x (t'). \]

Similarly, if $X \subset \M_t$ is a subset in the time-$t$ time-slice, then we say that $X$ \textbf{survives until time $t'$} if this is true for every $x \in X$ and we set $X(t') := \{ x(t') \;\; : \;\; x \in X \}$.
\end{definition}

A {\bf product Ricci flow spacetime} is a Ricci flow spacetime associated with an ordinary Ricci flow $(g(t))_{t\in [a,T)}$ on a manifold $M$, i.e. it is of the form $(M\times [a,T),\t,\D_\t,g)$, where  $\t=\pi_{[a,T)}$ is projection onto the interval factor $[a,T)$, $\D_\t$ corresponds to the vector field coming from $[a,T)$, and $g_t=\pi_M^*g(t)$ where $\pi_M:M\times[a,T)\ra M$ is the canonical projection. 

\begin{definition}[Product domain]
\label{def_product_domain}
Let $(\M, \mathfrak{t}, \partial_{\mathfrak{t}}, g)$ be a Ricci flow spacetime and let $X \subset \M$ be a subset.
We call $X$ a \emph{product domain} if there is an interval $I \subset [0, \infty)$ such that for any $t \in I$ any point $x \in X$ survives until time $t$ and $x(t) \in X$.
\end{definition}

\begin{definition}[Completeness of Ricci flow spacetimes] \label{def_completeness}
We say that a Ricci flow spacetime $(\M,\mathfrak{t}, \partial_{\mathfrak{t}}, g)$ is {\bf $0$-complete} if, whenever $\ga:[0,s_0)\ra \M$ is either an integral curve of $\pm\D_\t$ or a unit speed curve in some time slice, and $\sup_{s\in [0,s_0)}|\Rm|(\ga(s))<\infty$, then $\lim_{s\ra s_0}\ga(s)$ exists.
\end{definition}

\bigskip
The next lemma states that maximal product domains in $0$-complete Ricci flow spacetimes correspond to ordinary Ricci flows, provided their time slices are compact manifolds.
\begin{lemma}
\label{lem_maximal_product_region}
Let $(\M,\t,\D_\t,g)$ be a $0$-complete Ricci flow spacetime, $t_0\geq 0$, and $C\subset\M_{t_0}$ be a compact $3$-dimensional submanifold without boundary (i.e. a finite union of compact components).  Let $\C\subset\M$ be the maximal product domain in $\M_{\geq t_0}$ with initial time slice $C$, and $(h(t))_{t\in [t_0,T)}$ be the maximal Ricci flow  on $C$ whose initial metric $h(t_0)$ is equal to the restriction of the time slice metric $g_{t_0}$  to $C\subset\M_{t_0}$.  Then $\C$ is isometric to the product Ricci flow spacetime associated with $(h(t))_{t\in [t_0,T)}$.
\end{lemma}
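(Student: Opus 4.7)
The plan is to use the flow of $\D_\t$ to produce a canonical diffeomorphism from a product $C\times[t_0,T^*)$ onto $\C$, pull back the spacetime metric to a Ricci flow on $C$, and then match it with $(h(t))$ via uniqueness of Ricci flow, using $0$-completeness to identify the lifespan $T^*$ with $T$.

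\textbf{Setup.} Let $T^* \in (t_0,\infty]$ be the supremum of those $t'\geq t_0$ such that every $x\in C$ survives until time $t'$ in the sense of Definition~\ref{def_points_in_RF_spacetimes}. Define
\[
\Phi:C\times [t_0,T^*)\lra \M,\qquad \Phi(x,t):=x(t)=\gamma_x(t).
\]
By smooth dependence of the ODE $\gamma'=\D_\t(\gamma)$ on initial data, $\Phi$ is smooth. It is injective because on any fixed time slice the map $x\mapsto x(t)$ is the restriction of the (partial) time-$t$ flow of $\D_\t$, and different time slices are separated by $\t$. Its differential is injective: tangent vectors along $C$ are sent into $\ker(d\t)$ by the smooth flow, $\partial_t$ is sent to $\D_\t$, and $d\t(\D_\t)=1$ makes $\D_\t$ transverse to the slices. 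Hence $\Phi$ is a smooth open embedding, and by the definition of a maximal product domain its image is exactly $\C$.

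\textbf{Pulled-back Ricci flow.} Set $\tilde h(t):=\Phi_t^* g_t$ on $C$, where $\Phi_t=\Phi(\cdot,t)$. Since $\Phi$ conjugates $\partial_t$ with $\D_\t$, naturality of the Lie derivative together with the Ricci flow equation $\L_{\D_\t}g=-2\Ric(g)$ yields
\[
\partial_t\tilde h(t) \;=\; \Phi_t^*\bigl(\L_{\D_\t}g\bigr)\;=\;-2\Ric(\tilde h(t)),
\]
so $(\tilde h(t))_{t\in[t_0,T^*)}$ is a smooth Ricci flow on the compact manifold $C$ with initial condition $\tilde h(t_0)=g_{t_0}|_C=h(t_0)$. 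By uniqueness of Ricci flow on closed manifolds, $\tilde h(t)=h(t)$ on $[t_0,\min(T,T^*))$, and in particular $T^*\leq T$.

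\textbf{Identifying $T^*=T$.} Suppose for contradiction that $T^*<T$. Then $h$ extends smoothly to $[t_0,T^*]$, so $\sup_{t\in[t_0,T^*)}\sup_C|\Rm_{h(t)}|<\infty$. Transporting by $\Phi$, this gives a uniform bound on $|\Rm|$ along every integral curve $\gamma_x:[t_0,T^*)\to\M$, $x\in C$. By $0$-completeness the limit $x(T^*):=\lim_{t\to T^*}\gamma_x(t)$ exists in $\M$, and since $\t(x(T^*))=T^*>0$ the limit lies in the interior $\M\setminus\D\M$, where $\D_\t$ is a smooth non-vanishing vector field. The smooth extension of the pulled-back metric up to $t=T^*$ and smoothness of $\Phi$ promote $\Phi$ to a smooth map on $C\times[t_0,T^*]$, so $C(T^*)=\Phi(C\times\{T^*\})$ is compact. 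Covering $C(T^*)$ by finitely many flow boxes of $\D_\t$ yields a uniform $\eps>0$ so that $\gamma_x$ extends to $[t_0,T^*+\eps)$ for every $x\in C$, contradicting the definition of $T^*$. Hence $T^*=T$, $\tilde h=h$ on $[t_0,T)$, and $\Phi$ becomes the claimed isometry of Ricci flow spacetimes onto $\C$.

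\textbf{Anticipated obstacle.} The only nonroutine point is Step~3: passing from pointwise extendability of integral curves (the literal content of $0$-completeness) to a uniform-in-$x$ extension past $T^*$. This is handled by combining the uniform curvature bound coming from the smoothly extended Ricci flow $h$ with compactness of $C$ and a standard flow-box argument; the hypothesis $T^*>0$ is used to ensure the limit points avoid the initial boundary $\D\M=\M_0$.
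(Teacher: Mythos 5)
Your proposal is correct and follows essentially the same route as the paper: identify the product domain with $C\times[t_0,t_+)$ via the flow of $\D_\t$, observe that the transported metrics form a Ricci flow on $C$ agreeing with $h$ by uniqueness (forcing $t_+\leq T$), and rule out $t_+<T$ by combining the curvature bound from $h$, $0$-completeness, continuity of the flow near the limit points, and compactness of $C$. The only cosmetic differences are that you pull back along the forward flow rather than push forward to time $t_0$, and you phrase the uniform extension past $T^*$ via flow boxes around the compact slice $C(T^*)$ instead of neighborhoods in $C$; these are interchangeable.
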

\begin{proof}
Let $I\subset [t_0,\infty)$ be the time interval on which $\C$ is defined.  The compactness of $C$ implies that $I=[t_0,t_+)$ for some $t_+\in(t_0,\infty]$.  For $t\in [t_0,t_+)$, let $g_{t,t_0}$ be the Riemannian metric on $C$ obtained by pushing forward $g\restr \C_t$ under the time-$(t_0-t)$ flow of the time vector field $\D_\t$.  It follows from the definition of Ricci flow spacetimes that the family of metrics $(g_{t,t_0})_{t\in[t_0,t_+)}$ defines a Ricci flow.  By the uniqueness of Ricci flow, we therefore have $g_{t,t_0}=h(t)$ for $t<\min(T,t_+)$.  It follows that $t_+\leq T$.  Suppose $t_+<T$.  Choose $x\in C$.  Since $t_+<T$, the curvature $|{\Rm}|(x(t))$ remains uniformly bounded for $t\in [t_0,t_+)$, and hence by $0$-completeness, $x$ survives until time $t_+$, and hence to some $t'>t_+$.  By continuity of the flow of $\D_\t$, an open neighborhood of $x$ in $C$ survives until some time $t_x>t_+$.  By compactness $C$ survives until some time $t'>t_+$, which is a contradiction.
\end{proof}

\bigskip

\subsection{Singular Ricci flows} 

To define singular Ricci flows, we require the definition of a $\kappa$-solution.

\begin{definition}[$\kappa$-solution] \label{def_kappa_solution}
An ancient Ricci flow $(M, (g(t))_{t \in (-\infty, 0]} )$ on a $3$-di\-men\-sio\-nal manifold $M$ is called a \textbf{(3-dimensional) $\kappa$-solution}, for $\kappa > 0$, if the following holds:
\begin{enumerate}[label=(\arabic*)]
\item $(M, g(t))$ is complete for all $t \in (- \infty, 0]$,
\item $|{\Rm}|$ is bounded on $M \times I$ for all compact $I \subset ( - \infty, 0]$,
\item $\sec_{g(t)} \geq 0$ on $M$ for all $t \in (- \infty, 0]$,
\item $R > 0$ on $M \times (- \infty, 0]$,
\item $(M, g(t))$ is $\kappa$-noncollapsed at all scales for all $t \in (- \infty, 0]$

(This means that for any $(x,t) \in M \times (- \infty, 0]$ and any $r > 0$ if $|{\Rm}| \leq r^{-2}$ on the time-$t$ ball $B(x,t,r)$, then we have $|B(x,t,r)| \geq \kappa r^n$ for its volume.)
\end{enumerate}
\end{definition}

We can now define the canonical neighborhood assumption.  This characterizes the local geometry of a Ricci flow spacetime by the geometry of $\kappa$-solution using the notion of pointed closeness from Definition~\ref{def_geometric_closeness_time_slice}.  The main statement of this assumption is that regions of small scale (i.e. high curvature) are geometrically close to regions of $\kappa$-solutions.

\begin{definition}[Canonical neighborhood assumption] \label{def_canonical_nbhd_asspt}
Let $(M, g)$ be a (possibly incomplete) Riemannian manifold.
We say that $(M, g)$ satisfies the {\bf $\eps$-canonical neighborhood assumption} at some point $x$ if there is a $\kappa > 0$, a $\kappa$-solution $(\overline{M}, \linebreak[1] (\ov{g}(t))_{t \in (- \infty, 0]})$ and a point $\ov{x} \in \ov{M}$ such that $|{\Rm}| (\overline{x}, 0) = 1$ and such that $(M, g, x)$ is $\eps$-close to $(\ov{M}, \ov{g}(0), \ov{x})$ at some (unspecified) scale $\lambda > 0$.

For  $r>0$, we say that a subset $X$ of a Ricci flow spacetime $(\M,\t,\D_\t,g)$ satisfies the \textbf{$\eps$-canonical neighborhood assumption at scales below $r$}   if the $\eps$-canonical neighborhood assumption holds at all $x \in X$ with $|{\Rm}|(x)>r^{-2}$.
\end{definition}

\begin{definition}[Singular Ricci flow] 
  If $\eps>0$ and $r:[0,\infty)\ra (0,\infty)$ is a nonincreasing function, then an {\bf $(\eps,r)$-singular Ricci flow} is an orientable Ricci flow spacetime $(\M,\t,\D_\t,g)$ such that:
\begin{itemize}
\item  The initial time slice $\M_0$ is compact.
\item $\M$ is $0$-complete.
\item $\M_{[0,t]}$ satisfies the $\eps$-canonical neighborhood assumption at scales $<r(t)$.
\end{itemize}
A {\bf singular Ricci flow} is a Ricci flow spacetime that is an $(\eps,r)$-singular Ricci flow for some $\eps$, $r$.
\end{definition}

We remark that our notion of singular Ricci flow here is equivalent to the one in \cite{bamler_kleiner_uniqueness_stability}, which is weaker than the one in
\cite{kleiner_lott_singular_ricci_flows}. The existence theorem in \cite{kleiner_lott_singular_ricci_flows} yields singular Ricci flows satisfying the stronger condition. 
\begin{theorem}[Existence and uniqueness of singular Ricci flow]
\label{thm_existence_uniqueness_singular_ricci_flow}

\mbox{}
\begin{itemize}
\item {\rm (Existence \cite{kleiner_lott_singular_ricci_flows})}
For every compact orientable Riemannian $3$-manifold $(M, \linebreak[1] g)$ there is a singular Ricci flow $\M$ with $\M_0$ isometric to $(M,g)$.  Moreover, for every $\eps>0$ there is an $r:[0,\infty)\ra (0,\infty)$ such that $\M$ is an $(\eps,r)$-singular Ricci flow, where $r$ depends only on $\eps$ and  an upper bound on $|\Rm|$ and a lower bound on the injectivity radius of $M$.
\item {\rm (Uniqueness \cite{bamler_kleiner_uniqueness_stability})}  There is a universal constant $\eps_{\can}>0$ such that if $\M^1$, $\M^2$ are $(\eps_{\can},r)$-singular Ricci flows for some $r:[0,\infty)\ra(0,\infty)$, then any isometry $\phi:\M^1_0\ra\M^2_0$ extends to an isometry $\wh\phi:\M^1\ra\M^2$.
\end{itemize} 
\end{theorem}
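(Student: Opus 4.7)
For the existence part, the plan is to obtain the singular Ricci flow as a subsequential limit of Hamilton--Perelman Ricci flows with surgery as the surgery scale tends to zero. Starting from a compact orientable Riemannian $3$-manifold $(M,g)$, for each $\delta>0$ one constructs Perelman's Ricci flow with $\delta$-cutoff, which is a piecewise smooth Ricci flow that performs surgery on $\delta$-necks at each singular time. Perelman's work provides the two crucial uniform estimates on such flows, depending only on $\eps$ and on a curvature bound and injectivity radius lower bound for the initial data $g$: (i) the $\eps$-canonical neighborhood assumption at scales below some $r(t)$, and (ii) $\kappa$-noncollapsing at those same scales. One then assembles the $\delta$-flows into Ricci flow spacetimes with boundary, glued along the pre- and post-surgery time slices using the surgery gluing map. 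Letting $\delta\to 0$, smooth compactness for Ricci flow on the regular part and the uniform canonical neighborhood control near the singular set allow us to extract a pointed Cheeger--Gromov type limit as a Ricci flow spacetime $\M$. The $\eps$-canonical neighborhood assumption passes to the limit. To verify $0$-completeness of $\M$, observe that any integral curve of $\pm\D_\t$ or unit-speed spatial curve along which $|\Rm|$ is bounded must stay in the regular limit, where completeness follows from the smooth convergence on compact sets.

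For the uniqueness part, the strategy is a continuity argument: given $(\eps_{\can},r)$-singular Ricci flows $\M^1,\M^2$ and an isometry $\phi:\M^1_0\to\M^2_0$, we seek a maximal time-equivariant isometric embedding $\wh\phi:U\to\M^2$ with $U\subset\M^1$ open and containing $\M^1_0$. On any product subdomain of $\M^1$ that survives up to time $t$, Lemma~\ref{lem_maximal_product_region} identifies the spacetime with an ordinary smooth Ricci flow, so classical uniqueness of Ricci flow extends $\phi$ smoothly until a singularity is encountered. The real issue is propagating $\wh\phi$ through singular times, where the topology of time slices changes. For this one invokes a stability theorem for singular Ricci flows: if two $(\eps_{\can},r)$-singular Ricci flows have initial data that are $C^k$-close on a compact region, then after time $\tau$ they are close on a correspondingly large region, in an appropriate spacetime sense. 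Applied locally and combined with the canonical neighborhood assumption, which forces both flows to look like the same $\kappa$-solution model near high-curvature points, this yields a unique matching of the two spacetimes on larger and larger neighborhoods. Exhausting $\M^1$ by compact sets and passing to the limit produces the global isometry $\wh\phi$.

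The hard part is the uniqueness argument, specifically the propagation of the isometry through singular times, where the pointwise canonical neighborhood assumption must be upgraded to a spacetime matching that preserves not only the metric along time slices but also the time vector field $\partial_\t$. The threshold constant $\eps_{\can}$ must be chosen small enough that every canonical neighborhood is genuinely close to a single $\kappa$-solution model, and one needs delicate parabolic regularity on the regular part of $\M$ together with sharp control near surgery-like regions (neckpinches, cap-like regions) to rule out branching or mismatch. This is exactly the content of the stability theorem proved in \cite{bamler_kleiner_uniqueness_stability}, whose proof uses a Carleman-type estimate together with a barrier argument and an induction on scales.
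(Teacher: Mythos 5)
This theorem is not proved in the paper at all: it is quoted as background, with existence attributed to \cite{kleiner_lott_singular_ricci_flows} and uniqueness to \cite{bamler_kleiner_uniqueness_stability}, and your sketch outlines exactly the strategies of those two references (existence via $\delta\to 0$ limits of Perelman's surgery flows using his uniform canonical-neighborhood and $\kappa$-noncollapsing estimates; uniqueness via the stability/comparison machinery), so at the level of this paper there is nothing to compare---both you and the authors defer the real work to the cited papers. One small correction about the cited uniqueness proof: its analytic core is an interior decay estimate for the Ricci--DeTurck perturbation, established through an induction that constructs comparison domains and comparison maps time step by time step together with maximum-principle/vanishing-type arguments, rather than a Carleman estimate (Carleman-type unique continuation appears in related backward-uniqueness results, not here).
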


\begin{theorem}[Convergence of singular Ricci flows]
\label{thm_convergence_singular_ricci_flows}
Suppose $\{h_j\}$ is a sequence of smooth Riemannian metrics on a compact orientable $3$-manifold $M$, and $h_j\ra h_\infty$ smoothly as $j\ra\infty$.  For $j\in \n\cup \infty$, let $(\M^j,\t^j,\D_{\t^j},g^j)$ be a singular Ricci flow with time-$0$ slice $(M,h_j)$, and for every $T, C<\infty$, let 
$$
\M^j_{T,C}:=\{x\in \M^j\mid \t(x)\leq T, |{\Rm}|\leq C\}\,.
$$
Then there is a sequence $\{\M^\infty\supset U^j\stackrel{\Phi^j}{\lra}V^j\subset\M^j\}$ where:
\ben
\item $U^j$, $V^j$ are open, and $\Phi^j$ is a diffeomorphism.
\item For every $T$, $C$, we have $U^j\supset \M^\infty_{T,C}$,  $V^j\supset \M^j_{T,C}$. for large $j$. 
\item $\Phi^j$ is time-preserving, and the sequences $\{(\Phi^j)^*\D_{\t^j}\}$, $\{(\Phi^j)^*g^j\}$ converge smoothly on compact subsets $\M^\infty$ to $\D_{\t^\infty}$ and $g^\infty$, respectively. 
\een
\end{theorem}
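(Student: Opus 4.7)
The strategy is to combine uniform canonical-neighborhood control (from the existence half of Theorem~\ref{thm_existence_uniqueness_singular_ricci_flow}) with the quantitative stability theorem of \cite{bamler_kleiner_uniqueness_stability}, and then extract $\Phi^j$ by Cheeger--Gromov-style diagonalization against an exhaustion of $\M^\infty$ by the compacta $\M^\infty_{n,n}$. Because $h_j\to h_\infty$ smoothly on the compact manifold $M$, there are uniform upper bounds on $|\Rm_{h_j}|$ and uniform lower bounds on $\injrad_{h_j}$, so the existence theorem furnishes a single function $r:[0,\infty)\to (0,\infty)$ such that every $\M^j$, including $j=\infty$, is an $(\eps_{\can},r)$-singular Ricci flow. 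The $\eps_{\can}$-canonical neighborhood assumption below scale $r(T)$, together with $0$-completeness, ensures by a standard bounded-geometry-plus-completeness argument that each $\M^j_{T,C}$ is compact.

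Now invoke the quantitative stability theorem of \cite{bamler_kleiner_uniqueness_stability}: for every compact $K\subset \M^\infty$, every $\delta>0$, and every integer $k$, there is a threshold $j_0$ so that for $j\geq j_0$ one obtains a time-preserving open embedding $\Phi:U\hookrightarrow \M^j$ with $U\supset K$ and
$$
\|\Phi^* g^j - g^\infty\|_{C^k(K)} + \|\Phi^*\D_{\t^j}-\D_{\t^\infty}\|_{C^k(K)} < \delta.
$$
Taking $K_n:=\M^\infty_{n,n}$, $\delta_n=1/n$, $k_n=n$ and diagonalizing produces maps $\Phi^j:U^j\to V^j:=\Phi^j(U^j)$ satisfying properties (1), (3) and the half of (2) asserting $U^j\supset \M^\infty_{T,C}$ for $j$ large.

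The substantive remaining point is coverage, $V^j\supset \M^j_{T,C}$. The plan is to apply stability symmetrically: since Step~2 gives compactness of $\M^j_{T,2C}$ uniformly once $j$ is large (the underlying $\eps_{\can}$-canonical neighborhood bounds are symmetric in $j$), one obtains time-preserving embeddings $\Psi^j:U'^j\hookrightarrow \M^\infty$ with $U'^j\supset \M^j_{T,2C}$ that converge smoothly to the identity. By the uniqueness half of Theorem~\ref{thm_existence_uniqueness_singular_ricci_flow}, applied after identifying the time-$0$ slices via $h_j$ and $h_\infty$, the maps $\Phi^j$ and $\Psi^j$ must be mutual inverses on any domain where both are defined, so every $y\in\M^j_{T,C}$ is forced into $\Phi^j(U^j)=V^j$ for $j$ large. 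This coverage step is the main obstacle, because it is where one must genuinely use global uniqueness of singular Ricci flows to exclude the possibility of ``extra'' bounded-curvature regions in $\M^j$ that have no counterpart in $\M^\infty$; local stability alone would not suffice.
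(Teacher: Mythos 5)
First, a point of comparison: the paper does not prove Theorem~\ref{thm_convergence_singular_ricci_flows} at all — it is imported from \cite{bamler_kleiner_uniqueness_stability} — so the only question is whether your derivation from the stability theorem is sound. The first half of your argument (uniform canonical neighborhood scale $r$, compactness of $\M^j_{T,C}$, and construction of $\Phi^j$ on the exhaustion $\M^\infty_{n,n}$ by stability plus diagonalization) is reasonable in outline, with the small caveat that the given $\M^j$ are arbitrary singular Ricci flows with initial data $(M,h_j)$, so you need the uniqueness statement, not just existence, to know that these particular spacetimes satisfy the canonical neighborhood assumption at the uniform scale $r$.

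The coverage step $V^j\supset\M^j_{T,C}$, however, has a genuine gap. The uniqueness half of Theorem~\ref{thm_existence_uniqueness_singular_ricci_flow} applies only when there is an \emph{isometry} between the time-$0$ slices; here $(M,h_j)$ and $(M,h_\infty)$ are merely close, not isometric, so ``identifying the time-$0$ slices via $h_j$ and $h_\infty$'' is not an isometric identification and the uniqueness theorem does not apply to the pair $(\M^j,\M^\infty)$ at all. Even if the initial slices were isometric, the theorem would only assert that \emph{some} isometric extension exists; it contains no rigidity statement forcing two independently constructed almost-isometric embeddings $\Phi^j$ and $\Psi^j$ to be mutually inverse (and ``$\Psi^j$ converge smoothly to the identity'' is not meaningful as stated, since $\Psi^j$ maps $\M^j$ into a different spacetime). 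What the argument actually needs is either (i) the surjectivity clause that is built into the stability theorem of \cite{bamler_kleiner_uniqueness_stability} — the comparison map's image contains all points of the target whose curvature scale lies above a slightly coarser threshold — in which case the ``quantitative stability theorem'' you quoted (with only $U\supset K$ and no image clause) is weaker than what you need; or (ii) an honest argument that $\Psi^j\circ\Phi^j$ is close to the identity on large compacta, for instance by running the stability machinery on the self-comparison of $\M^\infty$, which citing uniqueness does not supply. As written, your argument does not exclude bounded-curvature regions of $\M^j$ that have no counterpart in $\M^\infty$, which is exactly the point you identified as the main obstacle.
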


If $M$ is a manifold, then a {\bf punctured copy of $M$} is a manifold diffeomorphic to $M\setminus S$, where $S\subset M$ is a finite (possibly empty) subset.  Note that if $M_1$, $M_2$ are compact $3$-manifolds, then punctured copies of $M_1$ and $M_2$ can be diffeomorphic only if $M_1$ is diffeomorphic to $M_2$.  This follows from the fact that if $D$, $D'$ are $3$-disks where $D'\subset \Int D$, then $\ol{D\setminus D'}$ is diffeomorphic to $S^2\times [0,1]$.   Hence the notion of ``filling in'' punctures is well-defined.  

The following result collects most of the topological and geometric properties of singular Ricci flows that will be needed in this paper.

\begin{theorem}[Structure of singular Ricci flows]
\label{thm_structure_singular_ricci_flow}
Let $(\M,\t,\D_\t,g)$ be an $(\eps_{\can},r)$-singular Ricci flow, where $\eps_{\can}$ is as in Theorem~\ref{thm_existence_uniqueness_singular_ricci_flow}.  Then:
\ben
\item For every $t\in [0,\infty)$, each component $C\subset\M_t$ is a punctured copy of some compact $3$-manifold.  
\item Let $\M_t^{\fill}$ be the (possibly empty) $3$-manifold obtained from $\M_t$ by filling in the punctures and throwing away the copies of $S^3$.  Then $\M_t^{\fill}$ is a compact $3$-manifold, i.e. all but finitely many components of $\M_t$ are punctured copies of $S^3$.  Furthermore, for every $t_1<t_2$ the  prime decomposition of $\M_{t_2}^{\fill}$ is part of the prime decomposition of $\M_{t_1}^{\fill}$.  Hence there are only finitely many times at which the  prime decomposition of $\M_t^{\fill}$ changes.
\item $\M_t^{\fill}$ is irreducible and aspherical for large $t$, depending only on the following bounds on the geometry of $\M_0$: 
 upper bounds on the curvature and volume, and a lower bound on the injectivity radius.
\item If the time-$0$ slice $\M_0$ is a spherical space form, then there is a time $\om\in[0,\infty)$ such that:
\begin{enumerate}[label=(\alph*)]
\item  For every $t<\om$, precisely one component $C_t$ of the time-$t$-slice $\M_t$ is a punctured copy of $\M_0$, and all other components are punctured copies of $S^3$.  
\item For every $t\geq\om$, the components of $\M_t$ are punctured $S^3$s.
\item If $\M_0$ is not diffeomorphic to $S^3$ or $RP^3$, then $C_t$ has no punctures for $t$ close to $\om(g)$ and the family of Riemannian manifolds
 $(C_t)_{t<\om}$ converges smoothly, modulo rescaling, to a manifold of constant sectional curvature $1$ as $t\ra \om$.  More precisely, if $t_0<\om(g)$ is close enough to $\om(g)$ that $C_{t_0}$ is compact and has positive sectional curvature, then:
	\ben
	\item The maximal product domain $\C$ with initial time slice $C_{t_0}$ is defined on $[t_0,\om(g))$, and is isometric to the Ricci flow spacetime of the (maximal) Ricci flow $(g_{t,t_0})_{t_0\in[t_0,\om(g))}$ on $C_{t_0}$ with initial condition $g_{t_0}$.  
	\item Modulo rescaling, the family of Riemannian metrics  $(g_{t,t_0})_{t_0\in[t_0,\om(g))}$ converges in the $C^\infty$-topology as $t\ra\om(g)$ to a metric $\ov g$ of constant sectional curvature $1$.
	\item For every $t\in[t_0,\om(g))$, the time slice $\C_t$ coincides with $C_t$. 
	\een 
\end{enumerate}
\item  If $\M_0$ is diffeomorphic to a closed hyperbolic $3$-manifold, then modulo rescaling the family of Riemannian manifolds $(\M_t)_{t\in [0,\infty)}$ converges smoothly to $\M_0$ equipped with a hyperbolic metric as $t\ra\infty$. 
More precisely,  if $t_0<\infty$, the time slice  $C_{t_0}$ is compact, and the maximal product domain $\C$ with initial time slice $C_{t_0}$ is defined on $[t_0,\infty)$, then:
\begin{enumerate}[label=(\roman*)]
	\item If $(g_{t,t_0})_{t_0\in[t_0,\infty)}$ is the maximal Ricci flow on $C_{t_0}$ with initial condition $g_{t_0}$ (which corresponds to $\C$ (cf. Lemma~\ref{lem_maximal_product_region}), then, modulo rescaling, the family of Riemannian metrics  $(g_{t,t_0})_{t_0\in[t_0,\infty)}$ converges in the $C^\infty$-topology as $t\ra\infty$ to a metric $\ov g$ of constant sectional curvature $-1$.
	\item For every $t\in[t_0,\infty)$, the time slice $\C_t$ coincides with $C_t$.
	\een
\een

\end{theorem}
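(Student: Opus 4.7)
The plan is to assemble the assertions from the structural analysis of singular Ricci flows in \cite{kleiner_lott_singular_ricci_flows, bamler_kleiner_uniqueness_stability}, the finite-time extinction results \cite{perelman_extinction, colding_minicozzi_extinction}, and Perelman's long-time thick-thin decomposition, all adapted to the singular Ricci flow framework. For (1), the $\eps_{\can}$-canonical neighborhood assumption together with $0$-completeness controls the geometry of $\M_t$ near infinity: any sequence $x_i\in\M_t$ with $|{\Rm}|(x_i)\to\infty$ eventually lies in an $\eps$-neck or $\eps$-cap modeled on a $\kappa$-solution, and $0$-completeness forces each noncompact end to be an asymptotic shrinking cylinder converging to a puncture in the metric completion, realizing each component of $\M_t$ as a compact $3$-manifold with a finite set removed. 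For (2), between singular times the flow is smooth and the topology of time slices is constant. At a singular time $t_*$, comparing $\M_{t_*-\eps}$ with $\M_{t_*+\eps}$ via the canonical neighborhood structure shows that the transition occurs by pinching off $\eps$-necks, producing either prime summand decompositions or the removal of entire punctured $S^3$ components. Filling in punctures and discarding $S^3$ components thus yields a sequence of prime decompositions in which summands are only removed; Kneser--Milnor finiteness then bounds the number of topology changes.

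For (3), Perelman's long-time analysis, transferred to singular Ricci flows, yields a thick-thin decomposition: after a time $T$ depending only on initial bounds on volume, curvature and injectivity radius, the rescaled thick part of each time slice is $\eps$-close to a (possibly empty) complete finite-volume hyperbolic manifold while the thin part is a graph manifold, glued along incompressible tori. This forces $\M_t^{\fill}$ to be irreducible and aspherical. Part (5) is the same scheme specialized to $\M_0$ hyperbolic: the thin part becomes negligible for large $t$, Mostow--Prasad rigidity identifies the asymptotic thick part with $(X,g_X)$, and Lemma~\ref{lem_maximal_product_region} matches the maximal product domain inside $\M$ with a smooth Ricci flow on $C_{t_0}$, upgrading this convergence to smooth convergence of the rescaled metrics in the $C^\infty$-topology.

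For (4), $\M_0$ is a spherical space form, hence prime and not aspherical, so by (3) the flow becomes extinct in finite time. Define $\om$ as the supremum of times at which some component of $\M_t$ is a punctured copy of $\M_0$. Irreducibility of $\M_0$ and the prime decomposition analysis in (2) show that for each $t<\om$ there is exactly one such component $C_t$, with all other components punctured $S^3$s, and that for $t\geq\om$ only punctured $S^3$s remain. The delicate point is (4)(c): for $\M_0$ not diffeomorphic to $S^3$ or $RP^3$, one argues that as $t\to\om$ the component $C_t$ must be compact with diameter shrinking like $\sqrt{\om-t}$. Were $C_t$ to develop a puncture close to $\om$, rescaling by the curvature at the puncture would extract a noncompact ancient $\kappa$-solution limit that is forced to carry the nontrivial topology of $\M_0$; rescaling instead by the diameter extracts a compact $\kappa$-solution, and the classification of compact three-dimensional $\kappa$-solutions identifies such a limit with a round shrinking $S^3/\Gamma$. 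Passing to sequences $t_i\to\om$ gives the claimed smooth convergence of the rescaled $(C_t, g_t)$ to $(X, g_X)$, and Lemma~\ref{lem_maximal_product_region} supplies the product domain description.

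The main obstacle is (4)(c). The delicate input is the classification of compact three-dimensional $\kappa$-solutions together with the rigidity statement that a positively curved compact $\kappa$-solution with fundamental group other than $\{1\}$ or $\Z/2$ is round; this is precisely the reason $S^3$ and $RP^3$ must be excluded at this step, since for those fundamental groups one cannot rule out more complicated noncompact limit behavior as $t\to\om$. The other ingredients---thick-thin decomposition, extinction, and the prime decomposition analysis at singular times---are by now standard in the singular Ricci flow setting, and primarily require translation from the Ricci flow with surgery literature into the language of Theorem~\ref{thm_existence_uniqueness_singular_ricci_flow}.
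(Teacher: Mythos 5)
Your overall toolkit (Kleiner--Lott structure results, finite-time extinction, Perelman's long-time analysis, Hamilton-type convergence) matches the paper's, but the step you yourself identify as delicate, (4)(c), is not carried by your argument. A curvature blow-up limit near a hypothetical puncture is a noncompact $\kappa$-solution modeling only a neck or cap region; nothing forces it to ``carry the nontrivial topology of $\M_0$,'' since that topology can sit in the bounded-curvature part far from the puncture, and the assertion that $C_t$ is compact with diameter $\sim\sqrt{\om-t}$ is precisely what has to be proved, not an input. The paper's mechanism is different: if $C_t$ is not nearly round, then by the structure theorem for the high-curvature region \cite[Prop.~5.31]{kleiner_lott_singular_ricci_flows} the high-curvature pieces of $C_t$ can only be copies of $S^2\times[0,\infty)$, $D^3$ or $S^2\times[0,1]$ --- this is where irreducibility and the exclusion of $S^3$ and $RP^3$ enter, ruling out closed pieces and $RP^3\setminus B^3$ --- so some bounded-curvature component $W$ of the complement is a punctured copy of $\M_0$; the canonical-neighborhood derivative bound on $|\D_\t |\Rm||$ together with $0$-completeness then makes $W$ survive a definite time $\tau>0$ independent of $t$, contradicting $\M^{\fill}_\om=\emptyset$ once $t>\om-\tau$. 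This survival-versus-extinction contradiction, not the classification of compact $\kappa$-solutions, is the engine; afterwards the precise statements (4)(c)(i)--(iii) come from Hamilton's theorem applied to the ordinary Ricci flow on $C_{t_0}$, identified with the maximal product domain via Lemma~\ref{lem_maximal_product_region}, which your sketch also leaves out.

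Two secondary gaps. For (2), your ``between singular times the topology is constant, and at a singular time necks pinch off'' picture presupposes that singular times are isolated and that transitions are surgeries; a singular Ricci flow is not constructed by surgery and its singular times are not known to be discrete. The paper instead uses \cite[Thm.~1.13]{kleiner_lott_singular_ricci_flows}: for $t_1<t_2$, all but finitely many points of the non-$S^3$ components of $\M_{t_2}$ survive back to time $t_1$, giving an embedding of the punctured components into $\M_{t_1}$ and hence monotonicity of the prime decomposition. For (3), the thick-thin decomposition alone does not force irreducibility or asphericity (lens spaces are graph manifolds), nor does it produce a time threshold depending only on bounds on the initial geometry; the paper derives (3) from the finite-time extinction theorems of Perelman and Colding--Minicozzi applied to Ricci flows with surgery approximating $\M$, transferred via the convergence theorem of \cite{kleiner_lott_singular_ricci_flows} and the uniqueness theorem of \cite{bamler_kleiner_uniqueness_stability}. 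Similarly, in (5) the upgrade to $C^\infty$-convergence of the rescaled flow on $C_{t_0}$ requires the stability of normalized Ricci flow near a hyperbolic metric \cite{ye_convergence,bamler_stability_hyperbolic_cusps}; Lemma~\ref{lem_maximal_product_region} only identifies the product domain with an ordinary Ricci flow and gives no convergence by itself.
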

\begin{proof}
The proof is a combination of known results.

(1)  is contained in \cite[Prop. 5.31]{kleiner_lott_singular_ricci_flows}.

We now prove (2).  Pick $0\leq t_1<t_2<\infty$.  Let $Y\subset \M_{t_2}$ be the union of finitely many connected components none of which is a punctured copy of $S^3$, and let $Y^{\fill}$ be the result of filling in the punctures of $Y$.  
By \cite[Theorem 1.13]{kleiner_lott_singular_ricci_flows}, there is a finite subset $S\subset Y$ such that flow of the time vector field $\D_\t$ is defined on $Y\setminus S$ over the time interval $[t_1-t_2,0]$, and hence it defines a smooth embedding $Y\setminus S\hookrightarrow \M_{t_1}$.  
Taking $t_1=0$, we see that the prime decomposition of $Y^{\fill}$ is part of the prime decomposition of $\M_0$, and hence the number of summands is bounded independently of the choice of $t_2$ and $Y$.  It follows that $\M_{t_2}^{\fill}$ (and similarly $\M_{t_1}^{\fill}$) are  compact $3$-manifolds, and without loss of generality we may assume that $Y$ is the union of all components of $\M_{t_2}$ that are not punctured copies of $S^3$.  The embedding $Y\hookrightarrow\M_{t_1}$ implies that the  prime decomposition of $\M_{t_2}^{\fill}$ is part of the prime decomposition of $\M_{t_1}^{\fill}$.  This proves (2).  

By \cite{colding_minicozzi_extinction,perelman_extinction}, there is a $\ul{t}<\infty$ such that for any Ricci flow with surgery (in the sense of Perelman \cite{perelman_surgery}) with sufficiently precise cutoff and starting from the Riemannian manifold $\M_0$, then for any $t\geq \ul{t}$ every component of the time-$t$ slice is irreducible and aspherical or a copy of $S^3$.  
Pick $t\geq \ul{t}$.  
Choose a connected component $C\subset \M_t$ that is not a punctured copy of $S^3$, and let $Z\subset C$ be a compact domain with spherical boundary components such that $\Int Z$ is diffeomorphic to $C$.  By the convergence theorem \cite[Thm. 1.2]{kleiner_lott_singular_ricci_flows} and \cite[Cor. 1.4]{bamler_kleiner_uniqueness_stability}, the domain $Z$ smoothly embeds in the time-$t$ slice of some Ricci flow with surgery starting from $\M_0$.  It follows that filling in the punctures of $C$, we get an irreducible and aspherical $3$-manifold.  This proves (3).

Suppose $\M_0$ is diffeomorphic to a spherical space form.  

If $\M_0$ is a copy of $S^3$, then by (2), for every $t\geq 0$ all components of $\M_t$ are punctured copies of $S^3$, and taking $\om =0$, assertions (a) and (b) follow.  

If $\M_0$ is not a copy of $S^3$, then by (2) and (3) there is an $\om\in (0,\infty)$ such that $\M_t^{\fill}$ is a copy of $\M_0$ for $t<\om$ and $\M_t^{\fill}=\emptyset$ for $t>\om$.  If $\M_\om^{\fill}$ is a copy of $\M_0$, then there is a compact domain with smooth boundary $Z\subset \M_\om$ such that $\Int Z$ is a punctured copy of $\M_0$.  Applying the flow of the time vector field $\D_\t$ for short time, we see that $Z$ embeds in $\M_t$ for some $t>\om$, contradicting $\M_t^{\fill}=\emptyset$.  Therefore $\M_\om^{\fill}=\emptyset$.  Hence assertions (4)(a) and (4)(b) hold. 

Now suppose $\M_0$ is not a copy of $S^3$ or $RP^3$.  Choose $t<\om$, and let $Z\subset \M_t$ be the component that is a punctured copy of $\M_0$.  Choose $\hat\eps>0$, and suppose $Z$ is not $\hat\eps$-close modulo rescaling to $\M_0$ equipped with a $K\equiv 1$ metric.  By  \cite[Prop. 5.31]{kleiner_lott_singular_ricci_flows}, there is a finite disjoint collection $\{N_i\}_{i=1}^k$ where:
\begin{enumerate}[label=(\roman*)]
\item Each $N_i$ is a domain with boundary in $Z$ which is diffeomorphic to one of the following:  $S^3$, $RP^3$, $S^2\times S^1$, $RP^3\# RP^3$, $S^2\times[0,\infty)$, $D^3$, $RP^3\setminus B^3$, or $S^2\times [0,1]$.  
\item $|{\Rm}|<Cr^{-2}(t)$ on the complement $Z\setminus \cup_i N_i$, where $C$ is a universal constant.  
\een
Since $\M_0$ is not a copy of $S^3$ or $RP^3$ and $Z$ is a punctured copy of $\M_0$, which is irreducible, each $N_i$ must be a copy of $S^2\times [0,\infty)$, $D^3$, or $S^2\times [0,1]$.  
The components of $Z\setminus \cup_i N_i$ embed in $\M_0$, and are therefore punctured copies of $S^3$ or $\M_0$.   
It follows that some component $W$ of $Z\setminus \cup_i N_i$ is a punctured copy of $\M_0$. 
Let $r_* > 0$ be a lower bound on $r(t)$ for $t \in [0, \omega)$.
By the canonical neighborhood assumption we can find a universal constant $C_*$ such that $|\partial_{\t} |{\Rm}| | < C_* r^{-2}_*$ whenever $|{\Rm}| \in ( r_*^{-2}, 2 r_*^{-2})$ (see \cite[Lemma 8.1]{bamler_kleiner_uniqueness_stability}).
Integrating this inequality, using the curvature bound in (ii) and $0$-completeness, it follows that there is a constant $\tau = \frac12 C_*^{-1} r_*^2 > 0$, which is independent of $t$, such that $W$ survives until time $t+\tau$ and such that $|{\Rm}| < 2 r_*^{-2}$ on $W (t')$ for all $t' \in [t, t+\tau]$ (compare \cite[Lemma 8.4]{bamler_kleiner_uniqueness_stability}).
So if $t$ is sufficiently close to $t$, then $W$ survives until time $\omega$, contradicting the fact that $\M_\om^{\fill}=\emptyset$. 
Hence modulo rescaling $Z$ is $\hat\eps$-close to $\M_0$ equipped with a metric of constant sectional curvature $1$. 
Thus the first part of assertion (4)(c) holds.

Now suppose that $C_{t_0}$ is compact and has positive sectional curvature for some $t_0<\om(g)$. Let $\C\subset\M_{[t_0,\infty)}$ be the maximal product domain with initial time slice $C_{t_0}$, and let $[t_0,t_+)$ be the time interval on which $\C$ is defined.  By Lemma~\ref{lem_maximal_product_region}, $\C$ is isometric to the spacetime for the  ordinary Ricci flow $(g_{t,t_0})_{t\in [t_0,t_+)}$ on $C_{t_0}$ with initial condition given by $g_{t_0}$.  Because $g_{t_0}$ has positive sectional curvature, it follows from \cite{hamilton_positive_ricci} that the metrics $(g_{t,t_0})_{t\in [t_0,t_+)}$ converge modulo rescaling to a $K\equiv 1$ metric $\ov g$ on $C_{t_0}$ as $t\ra t_+$.  If $t_+>\om(g)$, then $\M_t$ would contain a copy of $X$ for some $t>\om(g)$, contradicting  assertion (4)(b).  If $t_+<\om(g)$, then by the compactness of $C_{t_+}$, we may flow $C_{t_+}$ backward and forward under $\D_\t$ for a short time.  By (4)(a) this yields $C_t$ for $t$ close to $t_+$, and in particular $\C_t$ for $t<t_+$ close to $t_+$.  Hence $\C$ may be extended past $t_+$, which is a contradiction.  Thus $t_+=\om(g)$.  Hence (4)(c)(i) and (4)(c)(ii) hold.  Assertion (4)(c)(iii) follows from (4)(a).

Now suppose $\M_0$ is diffeomorphic to a hyperbolic $3$-manifold. 

Let $(\M^j,\t^j,\D_{\t^j},g^j)\ra\M^\infty$ be a convergent sequence of Ricci flow spacetimes as in \cite[Thm. 1.2]{kleiner_lott_singular_ricci_flows}, i.e. the $\M^j$s are associated with a sequence of Ricci flows with surgery with initial conditions isometric to $\M_0$, where the Perelman surgery parameter $\de_j$ tends to zero as $j\ra \infty$.  By  \cite[Cor. 1.4]{bamler_kleiner_uniqueness_stability} we may take $\M^\infty=\M$.  We recall that Perelman's work implies that for fixed $j$, a statement similar to (5) holds: as $t\ra\infty$ i.e. the Riemannian manifolds $(\M^j_t,t^{-2}g^j_t)$ converge in the smooth topology to $\M_0$ with a hyperbolic metric \cite{perelman_surgery,bamler_certain_topologies}.  Although Perelman's argument by itself does not provide the uniform control needed to pass the statement to the limit to obtain (5) directly, his argument can nonetheless be implemented by using some results from \cite{kleiner_lott_singular_ricci_flows}, as we now explain.

\begin{claim*}
For every $j\in \n\cup\{\infty\}$, and every $t\in [0,\infty)$:
\begin{itemize}
\item The scalar curvature attains a nonpositive minimum $R^j_{\min}(t)$ on $\M^j_t$.
\item the function $t\mapsto \wh R^j(t):=R^j_{\min}(t)(V^j)^\frac23(t)$ is nondecreasing, where $V^j(t)$ is the volume of $\M^j_t$.  
\end{itemize}
Moreover, $\wh R^j\ra \wh R^\infty$ uniformly on compact sets as $j\ra\infty$.
\end{claim*}
\begin{proof}
For $j\in \n$, the fact that $R^j_{\min}(t)$ is well-defined and negative, and $t\mapsto \wh R^j(t)$ is nondecreasing, was shown by Perelman.  From the properness asserted in \cite[Thm. 1.2(a)]{kleiner_lott_singular_ricci_flows} it follows that $t\mapsto R^\infty_{\min}(t)$ is a well-defined continuous function, and the main assertion of \cite[Thm. 1.2]{kleiner_lott_singular_ricci_flows} then implies that $R^j_{\min}\ra R^\infty_{\min}$ uniformly on compact sets.  

Since $V^\infty$ is continuous and $V^j\ra V^\infty$ uniformly on compact sets by \cite[Thm.~4.1, Cor. 7.11]{kleiner_lott_singular_ricci_flows}, it follows that $\wh R^j\ra \wh R^\infty$ uniformly on compact sets.  Because $\wh R^\infty$ is a uniform limit of nondecreasing functions, it is also nondecreasing.
\end{proof}

\bigskip

Using the fact that $\wh R^j\ra \wh R^\infty$ uniformly on compact sets, the arguments from \cite[Secs. 89-91]{kleiner_lott_perelman_notes} may now be implemented uniformly for all the Ricci flows with surgery, with the slight modification that the ``slowly varying almost hyperbolic structures'' in  \cite[Prop. 90.1]{kleiner_lott_perelman_notes}  are only defined on an interval $[T_0,T_j]$ for some sequence $T_j\ra \infty$.  After passing to a subsequence, we may use \cite[Thm. 1.2]{kleiner_lott_singular_ricci_flows} to obtain a version of \cite[Prop. 90.1]{kleiner_lott_perelman_notes} for $\M=\M^\infty$.   The proof of incompressibility of the cuspidal tori as in \cite[Sec. 91]{kleiner_lott_perelman_notes} carries over to the singular Ricci flow $\M$, since compressing disks avoid the thin parts of $\M$.  Alternatively, one may deduce incompressibility in time slices of $\M^\infty$ from incompressibility in $\M^j$ for large $j$.  The complement of the (truncated) hyperbolic regions consists of graph manifolds \cite[Sec. 91]{kleiner_lott_perelman_notes}.  By (2) we conclude that there is precisely one hyperbolic component, and it coincides with $\M^\infty_t$ for large $t$.  
Thus we have proven the first part of assertion (5).

The proof of (5)(i)--(ii) is similar to the proof of (4)(c)(i)--(iii), except that instead of appealing to \cite{hamilton_positive_ricci}, we use the convergence of normalized Ricci flow shown in \cite{ye_convergence,bamler_stability_hyperbolic_cusps}.
\end{proof}

%\bigskip

%\bigskip

\section{The canonical limiting constant curvature metric}
\label{sec_canonical_limiting_metric}
In this section, we let $X$ be a $3$-dimensional spherical space form other than $S^3$ or $RP^3$.   Recall that $3$-dimensional spherical space forms are always orientable.

By Theorem~\ref{thm_structure_singular_ricci_flow},  a singular Ricci flow $\M$ starting from an arbitrary metric $g\in \met(X)$, contains a family of time slice components whose metrics converge, modulo rescaling, to a round metric.  
In this section, we will use a result from \cite{kleiner_lott_singular_ricci_flows} to flow these metrics back to the time $0$ slice $\M_0\simeq X$, with the caveat that the flow is not  defined everywhere -- it is only defined on the complement of a finite subset.  Using the uniqueness and continuity theorems of \cite{bamler_kleiner_uniqueness_stability}, we show that this process yields a canonical partially defined metric of constant of sectional curvature $1$ on $X$, which depends continuously on $g$ in the sense of Definition~\ref{def_topology_partially_defined_metrics}.  See Figure~\ref{fig_spacetime} for a depiction of this.

Pick $g\in \met(X)$ and let $\M$ be a singular Ricci flow with $\M_0=(X,g)$.  Let $\om(g)=\om$ be as in Theorem~\ref{thm_structure_singular_ricci_flow}, and for every $t<\om(g)$ let $C_t$ be the unique component of $\M_t$ that is a punctured copy of $X$.  

For every $t_1,t_2\in[0,\om(g))$, let $C_{t_1,t_2}\subset C_{t_1}$ be the set of points in $C_{t_1}$ that survive until time $t_2$, i.e. the points for which the time $(t_2-t_1)$-flow of the time vector field $\D_\t$ is defined.  Then $C_{t_1,t_2}$ is an open subset of $C_{t_1}$, and the time $(t_2-t_1)$-flow of $\D_\t$ defines a smooth map $\Phi_{t_1,t_2}:C_{t_1,t_2}\ra \M_{t_2}$, which is a diffeomorphism onto its image.  We define a metric $g_{t_1,t_2}$ on $\Phi_{t_1,t_2}(C_{t_1,t_2})$ by $g_{t_1,t_2}:=(\Phi_{t_1,t_2})_*g_{t_1}$, where $g_{t_1}$ is the spacetime metric on $C_{t_1,t_2}\subset \M_{t_1}$.  We let $W_g(t):=\Phi_{t,0}(C_{t,0})$, so  $g_{t,0}$ is a metric on $W_g(t)$.

\begin{lemma}[Limiting $K\equiv 1$ metric]
\label{lem_canonical_limiting_metric}
Choose $t_0<\om(g)$ such that $C_{t_0}$ is compact and has positive sectional curvature; such a time $t_0$ exists by Theorem~\ref{thm_structure_singular_ricci_flow}(4)(a).  Then:  
\ben
\item $W_g(t)=W_g(t_0)=:W_g$ for all $t\in[t_0,\om(g))$.
\item Modulo rescaling, $g_{t,0}$ converges in the smooth topology to a $K\equiv 1$ metric $\check g$ on $W_g$ as $t\ra \om(g)$.
\item $(W_g,\check g)$ is isometric to $(X\setminus S, g_X)$ for some finite (possibly empty) subset $S\subset X$, where the cardinality of $S$ is bounded above by a constant depending only on bounds on the curvature, injectivity radius, and volume of $g$.
\item If $g$ has constant sectional curvature, then $W_g=X$ and $\check g=\lambda g$ for some $\lambda\in(0,\infty)$.
\een
\end{lemma}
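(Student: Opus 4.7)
The plan is to combine the flow-map structure with Theorem~\ref{thm_structure_singular_ricci_flow}(4)(c). For assertion (1), I would invoke Theorem~\ref{thm_structure_singular_ricci_flow}(4)(c)(i)--(iii): the maximal product domain $\C$ with initial time slice $C_{t_0}$ is defined on the entire interval $[t_0,\om(g))$ and satisfies $\C_t=C_t$ for all $t\in[t_0,\om(g))$, so $\Phi_{t_0,t}\colon C_{t_0}\to C_t$ is a diffeomorphism. A point $x\in C_t$ survives back to time $0$ iff $\Phi_{t,t_0}(x)\in C_{t_0,0}$, which gives $C_{t,0}=\Phi_{t_0,t}(C_{t_0,0})$; using the composition law $\Phi_{t,0}=\Phi_{t_0,0}\circ\Phi_{t,t_0}$ one then obtains $W_g(t)=\Phi_{t,0}(C_{t,0})=\Phi_{t_0,0}(C_{t_0,0})=W_g(t_0)$, proving (1).

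For (2), the same composition law yields $g_{t,0}=(\Phi_{t_0,0})_*(g_{t,t_0}\restr_{C_{t_0,0}})$. Theorem~\ref{thm_structure_singular_ricci_flow}(4)(c)(ii) provides rescaling factors $\la_t>0$ such that $\la_t^{-2}g_{t,t_0}\to\ov g$ smoothly on $C_{t_0}$, where $\ov g$ has constant sectional curvature $1$, and pushing forward by the fixed diffeomorphism $\Phi_{t_0,0}\colon C_{t_0,0}\to W_g$ yields smooth convergence $\la_t^{-2}g_{t,0}\to\check g:=(\Phi_{t_0,0})_*(\ov g\restr_{C_{t_0,0}})$, proving (2). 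For (3), since $C_{t_0}$ has no punctures by Theorem~\ref{thm_structure_singular_ricci_flow}(4)(c) it is diffeomorphic to $X$, so $(C_{t_0},\ov g)$ is a closed $K\equiv 1$ manifold diffeomorphic to $X$; by the isometric classification of spherical space forms, after normalizing the rescaling we may take $(C_{t_0},\ov g)$ to be isometric to $(X,g_X)$. The complement $S':=C_{t_0}\setminus C_{t_0,0}$ is finite by \cite[Thm.~1.13]{kleiner_lott_singular_ricci_flows} and corresponds under this isometry to a finite set $S\subset X$, giving the isometry $(W_g,\check g)\cong(X\setminus S,g_X)$. For the cardinality bound on $|S|$, I would argue by compactness and contradiction: if $|S_{g_j}|\to\infty$ along a sequence $g_j$ of metrics with uniformly bounded curvature, injectivity radius, and volume, pass to a subsequence with $g_j\to g_\infty$ smoothly, apply Theorem~\ref{thm_convergence_singular_ricci_flows} to get spacetime convergence, and then use quantitative control on the scales at which singular trajectories may terminate to derive a contradiction.

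For (4), if $g$ already has constant sectional curvature $1$, then the ordinary Ricci flow $g(t)=(1-4t)g$ is smooth on $X$ up to extinction at $t=\tfrac14$. The associated product Ricci flow spacetime vacuously satisfies the canonical neighborhood assumption and is therefore a singular Ricci flow with initial data $(X,g)$; by the uniqueness part of Theorem~\ref{thm_existence_uniqueness_singular_ricci_flow} it is isometric to $\M\restr_{[0,\om(g))}$. Hence $C_{t,0}=C_t$ for every $t<\om(g)$, so $W_g=X$, and tracking the rescaling factors $\la_t$ shows $\check g=\la g$ for some $\la>0$. The main obstacle in the argument is the uniform cardinality bound in (3); the other steps are essentially formal consequences of Theorems~\ref{thm_existence_uniqueness_singular_ricci_flow}, \ref{thm_convergence_singular_ricci_flows}, and \ref{thm_structure_singular_ricci_flow}, whereas the bound on $|S|$ requires a genuinely quantitative mechanism—most naturally a volume-comparison or scale-based estimate on the singular set—to prevent proliferation of singular trajectories within a prescribed initial-geometry class.
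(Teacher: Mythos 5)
Assertions (1), (2), (4), and the qualitative part of (3) in your proposal follow the paper's argument essentially verbatim: product domain from Theorem~\ref{thm_structure_singular_ricci_flow}(4)(c), the composition law $\Phi_{t,0}=\Phi_{t_0,0}\circ\Phi_{t,t_0}$, pushforward of the convergent family $g_{t,t_0}$, uniqueness of the round metric on $X$, and uniqueness of singular Ricci flows to identify $\M$ with the round shrinking product spacetime (your word ``vacuously'' for the canonical neighborhood assumption is off --- near the extinction time the assumption is genuinely in force, and it holds because the round shrinking space form is itself, after a time shift, a $\kappa$-solution --- but the conclusion stands).

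The genuine gap is the uniform cardinality bound in (3), which you yourself flag as ``the main obstacle'' and then only sketch. The compactness-and-contradiction scheme you propose does not work as stated: bounds on curvature, injectivity radius, and volume give at best $C^{1,\alpha}$ (Cheeger--Gromov) precompactness modulo diffeomorphisms, not the smooth convergence $g_j\to g_\infty$ required to invoke Theorem~\ref{thm_convergence_singular_ricci_flows}; and even granting spacetime convergence on curvature-bounded subsets, nothing in that theorem controls the \emph{number} of trajectories that fail to reach time $0$ --- the ``quantitative control on the scales at which singular trajectories may terminate'' you appeal to is precisely the content that needs proving. No contradiction argument is needed: the cited result \cite[Theorem 1.13]{kleiner_lott_singular_ricci_flows} is already quantitative, bounding the number of non-surviving points in $C_{t_0}$ in terms of the volume of $\M_0$ and the elapsed time, here $\om(g)$. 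One then bounds $\om(g)$ itself in terms of the curvature, injectivity radius, and volume bounds on $g$ via the extinction statement in Theorem~\ref{thm_structure_singular_ricci_flow}(3) (a spherical space form is not aspherical, so $\M^{\fill}_t=\emptyset$ once $t$ exceeds a time depending only on those bounds). With that substitution your proof of (3) closes, and the rest of the argument is correct.
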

We caution the reader that although $(W_g,\check g)$ is isometric to $(X,g_X)$ with finitely many points removed, the complement $X\setminus W_g$ may have nonempty interior and could, in principle, be quite irregular.

\begin{proof}[Proof of Lemma~\ref{lem_canonical_limiting_metric}]
Let $\C\subset\M_{[t_0,\infty)}$ be the maximal product domain with  $\C_{t_0}=C_{t_0}$.  By Theorem~\ref{thm_structure_singular_ricci_flow}(4)(c), the domain $\C$ is defined on the time interval $[t_0,\om(g))$.
Hence for all $t\in[t_0,\om(g))$ we have $C_{t,t_0}=C_{t}$,  $C_{t,0}=\Phi^{-1}_{t,t_0}(C_{t_0,0})$, $\Phi_{t,0}=\Phi_{t_0,0}\circ\Phi_{t,t_0}$ and $W_g(t)=\Phi_{t,0}(C_{t,0})=\Phi_{t_0,0}(C_{t_0,0})=W_g(t_0)$.   Thus (1) holds.

Note that on $\Phi_{t,0}(C_{t,0})$
$$
g_{t,0}=(\Phi_{t,0})_*g_t=(\Phi_{t_0,0})_*(\Phi_{t,t_0})_*(g_t)=(\Phi_{t_0,0})_*g_{t,t_0}\,.
$$
By Theorem~\ref{thm_structure_singular_ricci_flow}(4), modulo rescaling, $(g_{t,t_0})_{t\in[t_0,\om(g))}$  converges in $\met(C_{t_0,0})$ to the $K\equiv 1$ metric $\ov g$ on $C_{t_0,0}$ as $t\ra\om(g)$.  Consequently, modulo rescaling,  $(g_{t,0})_{t\in[t_0,\om(g))}$ converges in $\met(W_g)$ to the $K\equiv 1$ metric $(\Phi_{t_0,0})_*\ov g$.
 Thus assertion (2) holds. 

By \cite[Theorem 1.13]{kleiner_lott_singular_ricci_flows}, all but finitely many points in $C_{t_0}$ survive until $t=0$, i.e. $C_{t_0}\setminus C_{t_0,0}$ is finite; moreover the cardinality is bounded above depending only on the volume of $\M_0$ and $\om(g)$.  
There is a unique $K\equiv 1$ metric on $X$ up to isometry \cite{milnor_whitehead_torsion}, so $(C_{t_0},\ov g)$ is isometric to $(X,g_X)$.  Since $(W_g,\check g)$ is isometric to $(C_{t_0,0},\ov g)$, assertion (3) holds.

Now suppose $g$ has constant sectional curvature.  Then $\M$ is the product Ricci flow spacetime corresponding to a shrinking round space form. Hence $W_g=X$, and $g_{t,0}$ agrees with $g$ modulo rescaling, for all $t\in [0,\om(g))$, so $\check g$ agrees with $g$ modulo rescaling, and assertion (4) holds.
\end{proof}

\bigskip

By Theorem~\ref{thm_structure_singular_ricci_flow}, for every  $g\in\met(X)$ there exists a singular Ricci flow $\M$ with $\M_0$ isometric to $(X,g)$ which is unique up to isometry; hence the pair $(W_g,\check g)$ is also independent of the choice of $\M$, i.e. it is a well-defined invariant of $g\in \met(X)$.  

\bigskip

Next, we show that the pair $(W_g,\check g)$ varies continuously with $g$, in the sense of Definition~\ref{def_topology_partially_defined_metrics}.  After unwinding definitions, this is a straightforward consequence of the convergence theorem of \cite{bamler_kleiner_uniqueness_stability}.

\begin{lemma}
\label{lem_continuous_partmet}
The assignment $g\mapsto (W_g,\check g)$ defines a continuous map $\met(X)\ra \partmet(X)$.  
\end{lemma}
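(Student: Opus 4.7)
The plan is to verify sequential continuity. Let $g_j\to g_\infty$ in $\met(X)$ and fix a compact $K\subset W_{g_\infty}$; by the remark following Definition~\ref{def_topology_partially_defined_metrics}, it suffices to show $K\subset W_{g_j}$ for all large $j$ and $\nabla^k_{\check g_\infty}(\check g_j-\check g_\infty)\to 0$ uniformly on $K$ for every $k\ge 0$. Let $\M^j,\M^\infty$ be the associated singular Ricci flows (Theorem~\ref{thm_existence_uniqueness_singular_ricci_flow}), and apply Theorem~\ref{thm_convergence_singular_ricci_flows} to obtain open subsets $U^j\subset\M^\infty$, $V^j\subset\M^j$ and time-preserving diffeomorphisms $\Phi^j:U^j\to V^j$ such that $(\Phi^j)^*\D_{\t^j}\to\D_{\t^\infty}$ and $(\Phi^j)^*g^j\to g^\infty$ smoothly on compact subsets, with $U^j$ eventually containing every $\M^\infty_{T,C}$. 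Combining the time-$0$ restriction of the metric convergence with $g_j\to g_\infty$ on $X$ and the uniqueness clause of Theorem~\ref{thm_existence_uniqueness_singular_ricci_flow}, we may further arrange that $\Phi^j|_{\M^\infty_0\cap U^j}\to\id_X$ in $C^\infty_{\loc}$ under the canonical identifications $\M^\infty_0=X=\M^j_0$.

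Pick $t_0<\om(g_\infty)$ so that $C^\infty_{t_0}$ is compact and positively curved, as permitted by Theorem~\ref{thm_structure_singular_ricci_flow}(4)(c). Choose a slightly larger compact $K^+\subset W_{g_\infty}$ with $K\subset\Int K^+$, set ${K'}^+:=(\Phi^\infty_{t_0,0})^{-1}(K^+)\subset C^\infty_{t_0,0}$, and let $L\subset\M^\infty_{[0,t_0]}$ be the compact set swept out by the backward $\D_{\t^\infty}$-trajectories from ${K'}^+$ to $K^+$. Since $L$ has bounded curvature, Theorem~\ref{thm_convergence_singular_ricci_flows} gives $L\subset U^j$ for all large $j$; the smooth convergence of $(\Phi^j)^*\D_{\t^j}$ on $L$ then implies that every backward flow line of $(\Phi^j)^*\D_{\t^j}$ from ${K'}^+$ is defined on $[0,t_0]$ and $C^\infty$-close to the corresponding $\D_{\t^\infty}$-trajectory.

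The closed submanifold $\Phi^j(C^\infty_{t_0})\subset\M^j_{t_0}$ is connected, boundaryless, diffeomorphic to $X$, and (for $j$ large) of positive sectional curvature, so it is a connected component of $\M^j_{t_0}$ that is a non-punctured copy of $X$. Since $X\not\cong S^3, RP^3$, Theorem~\ref{thm_structure_singular_ricci_flow}(4)(a) forces $t_0<\om(g_j)$ and $\Phi^j(C^\infty_{t_0})=C^j_{t_0}$; in particular $\Phi^j({K'}^+)\subset C^j_{t_0,0}$, and pushing the backward-flow picture forward by $\Phi^j$ shows that $\Phi^j_{t_0,0}(\Phi^j({K'}^+))\subset W_{g_j}$ is, as a subset of $X$, close to $K^+$ in the $C^\infty_{\loc}$ sense (using $\Phi^j|_{\M^\infty_0}\to\id_X$). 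Consequently $K\subset W_{g_j}$ for all large $j$. For the metric convergence, pick any $t\in[t_0,\om(g_\infty))$; the smooth convergence of the Ricci flows $g^j_{\cdot,t_0}$ on $C^j_{t_0}$ (obtained from the convergence of spacetimes) implies, after push-forward by $\Phi^j_{t,0}$, that $g^j_{t,0}\to g^\infty_{t,0}$ smoothly on $K$. Choosing $t$ close to $\om(g_\infty)$ (hence $<\om(g_j)$) makes the rescaled $g^j_{t,0}$ (resp.\ $g^\infty_{t,0}$) arbitrarily $C^k$-close to $\check g_j$ (resp.\ $\check g_\infty$) on $K$ by Lemma~\ref{lem_canonical_limiting_metric}(2), and a diagonal argument yields $\nabla^k_{\check g_\infty}(\check g_j-\check g_\infty)\to 0$ uniformly on $K$ for every $k$.

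The main obstacle is the initial-slice normalization $\Phi^j|_{\M^\infty_0}\to\id_X$: one must verify that the diffeomorphisms produced by Theorem~\ref{thm_convergence_singular_ricci_flows} can be chosen so that, at time $0$, they tend to the identity on $X$ rather than to some nontrivial element of $\isom(X,g_\infty)$. This rests on the stability-based proof of Theorem~\ref{thm_convergence_singular_ricci_flows} in \cite{bamler_kleiner_uniqueness_stability}, which propagates a chosen identification of initial slices through the entire spacetime.
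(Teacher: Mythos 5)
Your overall scaffolding is the same as the paper's: pass to the singular Ricci flows, apply Theorem~\ref{thm_convergence_singular_ricci_flows}, work at a time $t_0$ where $C^\infty_{t_0}$ is compact and positively curved, identify $\Phi^j(C^\infty_{t_0})=C^j_{t_0}$, control the backward flow of the time vector fields on a compact product region down to time $0$, and then compare the limiting round metrics. However, there is a genuine gap at the key analytic step. Your ``diagonal argument'' needs the following uniformity: for a \emph{fixed} $t$ close to $\om(g_\infty)$, the rescaled $g^j_{t,0}$ must be $\eps$-close to $\check g_j$ on $K$ \emph{for all large $j$ simultaneously}. Lemma~\ref{lem_canonical_limiting_metric}(2) does not give this: it is a statement about a single flow ($t\to\om(g_j)$ for fixed $j$), coming from Hamilton's theorem, and it provides no modulus of convergence that is uniform over the family of initial data $g^j_{t,t_0}$. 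Asserting that ``choosing $t$ close to $\om(g_\infty)$ makes the rescaled $g^j_{t,0}$ arbitrarily close to $\check g_j$'' is exactly the continuity/stability statement that has to be proved or cited. The paper closes this by invoking the continuous dependence of the limiting round metric on the positively curved initial metric \cite{knopf_et_al}: since $(\phi^j_{t_0})^*g^j\to g^\infty$ on $C^\infty_{t_0}$, one gets $(\phi^j_{t_0})^*\ov g^j\to\ov g^\infty$ directly, and then pulls back along the converging flows of the time vector field (cf.\ also Remark~\ref{rem_alternate_continuous_dependence}, which notes any continuous, diffeomorphism-equivariant ``rounding'' procedure would do). Without some such input, your final step does not close.

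A secondary point: your justification of the normalization $\Phi^j|_{\M^\infty_0}\to\id_X$ via the uniqueness clause of Theorem~\ref{thm_existence_uniqueness_singular_ricci_flow} does not work as stated. Knowing $(\Phi^j_0)^*g^j\to g^\infty$ only makes $\Phi^j_0$ close to an isometry of $(X,g_\infty)$, and you cannot correct it by post-composing with a spacetime isometry, since the needed correction $(\Phi^j_0)^{-1}$ is in general not an isometry of $(X,g^j)$ and so does not extend. The correct source is the one you name at the end: in the stability construction of \cite{bamler_kleiner_uniqueness_stability} the comparison map is built from the identity identification of the initial slices, so $\Phi^j_0$ may be taken to be (or converge to) the identity; the paper's own chain of equalities, which inserts $(\phi^j_0)^*$ harmlessly, implicitly relies on the same fact, so this point is acceptable once attributed correctly, but it is not a consequence of the uniqueness statement alone.
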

\begin{proof}
Suppose $g^j\ra g^\infty$ in $\met(X)$.  For all $j\in \n\cup \{\infty\}$, let $\M^j$ be a singular Ricci flow with $\M^j_0=(X,g^j)$, let $\om(g^j)$,  $C^j_t$, be as in Theorem~\ref{thm_structure_singular_ricci_flow}, and $(W_{g^j},\check g^j)$  be as in Corollary~\ref{lem_canonical_limiting_metric}.  Choose a compact subset $Z\subset W_{g^\infty}$.  To prove the lemma, we will show that $Z\subset W_{g^j}$ for large $j$, and that $\check g^j \ra \check g^\infty$  in the $C^\infty$-topology, on an open subset containing $Z$.

Let $\phi^j:\M^\infty\supset U^j\ra V^j\subset \M^j$ be the diffeomorphism onto its image from Theorem~\ref{thm_convergence_singular_ricci_flows}.

Let $t_0$ be the time from Lemma~\ref{lem_canonical_limiting_metric} for $\M^\infty$.  By Theorem~\ref{thm_convergence_singular_ricci_flows},  the map $\phi^j$ is defined on $C^\infty_{t_0}$ for large $j$, and  $(\phi^j_{t_0})^*g^j\ra g^\infty$ on $C^\infty_{t_0}$.  
Therefore, without loss of generality, we may assume that $\phi^j$ is defined on $C^\infty_{t_0}$ for all $j$, and $C^j_{t_0}$ has positive sectional curvature.  For $j\in \n\cup\{\infty\}$, if $\C^j\subset\M^j$ is the maximal product domain with time-$t_0$ slice $C^j_{t_0}$, then $\C^j$ is defined on $[t_0,\om(g^j))$ and $\C^j_t=C^j_t$ for all $t\in[t_0,\om(g^j))$,  by  Theorem~\ref{thm_structure_singular_ricci_flow}(4).  

The flow $\Phi^j_{t,t_0}$ of the time vector field $\D_{\t^j}$ is defined on $C^j_t$ for $t\in [t_0,\om(g^j))$, $j\in \n\cup\{\infty\}$, since $\C^j$ is a product domain  in $[t_0,\om(g^j))$. Thus, for $j\in \n\cup\{\infty\}$, $g^j_{t,t_0}:=(\Phi^j_{t,t_0})_*g^j$ is a well-defined smooth metric on $C^j_{t_0}$ for all $t\in[t_0,\om(g^j))$, and modulo rescaling, $g^j_{t,t_0}\ra\ov g^j\in \met_{K\equiv 1}(C^j_{t_0})$ as $t\ra\om(g^j)$.  By \cite{knopf_et_al}, since $(\phi^j)^*g^j\ra g^\infty$ on $C^\infty_{t_0}$ in the $C^\infty$-topology, it follows that $(\phi^j)^*\ov g^j\ra \ov g^\infty$ in the $C^\infty$-topology.

By assumption $Z\subset W_{g^\infty}$, so it survives until time $t_0$ in $\M^\infty$, and $\Phi^\infty_{0,t_0}(Z)\subset C^\infty_{t_0}$.  Since $Z$ is compact, there is a product domain 
$\N^\infty\subset\M^\infty$ defined on $[0,t_0]$, such that $\N^\infty$ has compact closure in $\M^\infty$,  and the time zero slice $\N^\infty_0$ is an open subset of $W_{g^\infty}$ containing $Z$.

For large $j$ the map $\phi^j$ is defined on $\N^\infty$, and we let $\wh\N^j$ be the pullback of $\M^j$ under $\phi^j\restr\N^\infty$, i.e. $\wh\N^j:=(\N^\infty,\wh\t^j=\t^\infty,\D_{\wh t^j},\wh g^j)$ where $\D_{\wh t^j}:=(\phi^j\restr \N^\infty)^*\D_{\t^j}$, $\wh g^j:=(\phi^j\restr\N^\infty)^*g^j$. 
(Note that $\wh\N^j$ is not quite a Ricci flow spacetime because it has boundary points in the time-$t_0$ slice.)
By Theorem~\ref{thm_convergence_singular_ricci_flows}, we have $\D_{\wh t^j}\ra \D_{\t^\infty}$, $\wh g^j\ra g^\infty$ in the $C^\infty$-topology on $\N^\infty$ as $j\ra\infty$.  

Now choose an open set $Z'\subset\N^\infty_0$ with compact closure in $\N^\infty_0$, such that $Z\subset Z'$.  Since $\D_{\wh \t^j}$ converges to the product vector field $\D_{\t^\infty}$ on $\N^\infty$, it follows that for large $j$, the flow $\wh\Phi^j_{0,t_0}\restr Z'$ of $\D_{\wh \t^j}$ is defined and takes values in $C^\infty_{t_0}$, and $\wh\Phi^j_{0,t_0}\restr Z'\ra \Phi^\infty_{0,t_0}\restr Z'$ in the $C^\infty$-topology as $j\ra\infty$.  Because $(\phi^j_{t_0})^*\ov g^j\ra \ov g^\infty$ as $j\ra\infty$, we get 
\begin{equation}
\label{eqn_phi_pullback_g_bar}
\left(\wh\Phi^j_{0,t_0}\right)^*(\phi^j_{t_0})^*\ov g^j\ra (\Phi^\infty_{0,t_0})^*\ov g^\infty=\check g^\infty \qquad\text{on}\qquad Z'
\end{equation} 
in the $C^\infty$ topology, as $j\ra\infty$.   

The map $\phi^j:\wh\N^j \ra \phi^j(\N^\infty)\subset \M^j$ preserves time functions, time vector fields, and metrics.   Therefore, the assertions in the previous paragraph imply that for large $j$, $\Phi^j_{0,t_0}$ is defined on $Z'$  and takes values in $\phi^j_{t_0}(C^\infty_{t_0})=C^j_{t_0}$ and  
$\check g^j\restr Z'=(\Phi^j_{0,t_0}\restr Z')^*\ov g^j=(\phi^j_0)^*(\Phi^j_{0,t_0}\restr Z')^*\ov g^j=(\wh\Phi^j_{0,t_0}\restr Z')^*(\phi^j_{t_0})^*\ov g^j$.  By (\ref{eqn_phi_pullback_g_bar}) we conclude that $\check g^j \restr Z'\ra \check g^\infty\restr Z'$ in the $C^\infty$- topology as $j\ra \infty$.
\end{proof}

\begin{remark}
\label{rem_alternate_continuous_dependence}
In order to prove the continuous dependence of the limiting round metric on the initial metric of positive sectional curvature, we invoked the continuity theorem from \cite{knopf_et_al}.  However, in our applications in the next section, it would work equally well if instead of using the limiting $K\equiv 1$ metric produced by Ricci flow, we used some other geometric construction to replace a metric $h$ that is $\eps$-close to a round Riemannian metric with a round metric $\ov h$, as long as $\ov h$ depends continuously on $h$ in the smooth topology and is equivariant with respect to diffeomorphisms.  For instance, one could use the smooth dependence of certain eigenspaces of the Laplacian.
\end{remark}

\section{Extending constant curvature metrics}
\label{sec_extending_constant_curvature_metrics}
The goal of this section is the following proposition, which asserts that under certain conditions a finite dimensional continuous family of partially defined $K\equiv 1$ metrics can be extended to a continuous family of globally defined $K\equiv 1$ metrics. 

In this section $X$ will denote  a spherical space form other than $S^3$.  Pick $g_X\in\met_{K\equiv 1}(X)$. We recall  that $g_X$ is unique up to isometry \cite{milnor_whitehead_torsion,de_rham_torsion,franz_torsion,reidemeister_torsion}.

In the following we will use the term {\bf polyhedron} to refer to (the geometric realization of) a simplicial complex.  

\begin{proposition}[Extending $K\equiv 1$ metrics]
\label{prop_extending_metrics_general}
Let $P_0$ be a finite polyhedron, and $Q_0\subset P_0$ a subpolyhedron.  Suppose $P_0\ni p\mapsto (W_p,g(p))$ is an assignment with the following properties:
\begin{enumerate}[label=(\roman*)]
\item $p\mapsto (W_p,g(p))$ defines a continuous map $P_0\ra \partmet(X)$. 
\item There is an $n<\infty$ such that for every $p\in P_0$ the Riemannian manifold $(W_p,g(p))$ is isometric to $(X\setminus S_p,g_X)$ for a finite subset $S_p\subset X$ with $|S_p|\leq n$.   
\item $W_q=X$ for all $q\in Q_0$.
\een 
Then there is a continuous map $\wh g:P_0\ra \met_{K\equiv 1}(X)$ such that for every $q\in Q_0$ we have $\wh g(q)=g(q)$.
\end{proposition}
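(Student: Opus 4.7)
The plan is to induct on the skeleta of a sufficiently fine triangulation of $P_0$, reducing the metric extension problem at each step to a parametrized embedding extension problem solvable via Lemma~\ref{lem_embedding_restriction_fiber_bundle}(b).

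First, using the compactness of $P_0$ and the lower semicontinuity of $p\mapsto W_p$ in $\partmet(X)$, a finite-cover plus barycentric-subdivision argument produces a triangulation $T$ of $P_0$ with $Q_0$ as a subcomplex, equipped for each simplex $\sigma\in T$ with an open polyhedral neighborhood $V_\sigma\supset\sigma$ and a compact codimension-$0$ submanifold with smooth boundary $Z_\sigma\subset X$ satisfying: (a) $\overline{X\setminus Z_\sigma}$ is a finite disjoint union of smooth closed $3$-balls $D_{\sigma,1},\dots,D_{\sigma,k_\sigma}$ (possible because $X$ is irreducible and each $W_p$ is diffeomorphic to $X\setminus S_p$ with $|S_p|\le n$); (b) $Z_\sigma\subset W_p$ for all $p\in V_\sigma$, with $p\mapsto g(p)|_{Z_\sigma}$ continuous; (c) $V_\tau\subset V_\sigma$ and $Z_\tau\subset Z_\sigma$ whenever $\sigma\prec\tau$, arranged by choosing $Z_\tau$ for top-dimensional simplices first and enlarging for lower-dimensional faces; and (d) $Z_\sigma=X$ when $\sigma\subset Q_0$.

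Next, I reformulate the extension problem on a simplex via developing maps. For each pair $(\sigma,i)$ fix a slightly enlarged open $3$-disk $D^+_{\sigma,i}\subset X$ containing $D_{\sigma,i}$ and such that $A_{\sigma,i}:=D^+_{\sigma,i}\cap Z_\sigma$ is a simply connected collar of $\partial D_{\sigma,i}$ in $Z_\sigma$. Because $g(p)|_{A_{\sigma,i}}$ is a round metric on a simply connected domain, its developing map yields, after fixing the $\isom(S^3)$-ambiguity by a continuous choice of basepoint and frame, a continuous map $\varphi_{\sigma,i}:V_\sigma\to\Embed(A_{\sigma,i},S^3)$. A $K\equiv 1$ extension of $g(p)|_{Z_\sigma}$ across $D_{\sigma,i}$ then corresponds, via the developing map, to a smooth extension $\hat\varphi_{\sigma,i}(p):D^+_{\sigma,i}\hookrightarrow S^3$ of $\varphi_{\sigma,i}(p)$, i.e.\ (identifying $D^+_{\sigma,i}\cong D^3$) a lift through the restriction map of Lemma~\ref{lem_embedding_restriction_fiber_bundle}(a). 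I then carry out the induction on skeleta. Set $\hat g:=g$ on $Q_0$, and suppose inductively that $\hat g$ has been continuously defined on $\partial\sigma$ for a $k$-simplex $\sigma\cong D^k\not\subset Q_0$, in such a way that $\hat g(p)|_{Z_\rho}=g(p)|_{Z_\rho}$ for $p$ in each face $\rho\prec\sigma$ (hence $\hat g(p)|_{Z_\sigma}=g(p)|_{Z_\sigma}$ on $\partial\sigma$, by nesting (c)). The developing maps of $\hat g(p)$ on $D^+_{\sigma,i}$ then assemble into a continuous lift $\hat\varphi_{\sigma,i}:\partial\sigma\to\Embed_+(D^+_{\sigma,i},S^3)$ of $\varphi_{\sigma,i}|_{\partial\sigma}$; Lemma~\ref{lem_embedding_restriction_fiber_bundle}(b) extends it continuously over $\sigma$, and pulling back the round metric along $\hat\varphi_{\sigma,i}(p)$ and gluing to $g(p)|_{Z_\sigma}$ yields the desired $\hat g:\sigma\to\met_{K\equiv 1}(X)$, which matches the boundary data and satisfies the inductive hypothesis for $\sigma$. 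After finitely many steps this completes the construction.

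The main obstacle I anticipate is the coherent arrangement of data across the face relation: the nesting $Z_\tau\subset Z_\sigma$, the normalization of developing maps, and the enlarged disks $D^+_{\sigma,i}$ must all be chosen consistently so that lifts over $\partial\sigma$ restrict to genuine lifts of $\varphi_{\sigma,i}|_{\partial\sigma}$ in the $Z_\sigma$-picture. This is handled by fixing basepoints and frames in the smallest $Z_\tau$ (top-dimensional) and propagating outward by nesting, so that the developing-map normalization is inherited up the face lattice; continuity at $Q_0$ is then automatic from the construction, since for $\sigma$ meeting $Q_0$ the condition $Z_\sigma=X$ forces $\hat g$ to agree with $g$ there. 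Given these coherent choices, Lemma~\ref{lem_embedding_restriction_fiber_bundle}(b) supplies the needed extensions at every stage, completing the obstruction-theoretic induction.
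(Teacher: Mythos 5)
Your strategy is the same as the paper's (a fine subdivision with nested domains $Z_\sigma$ whose complements are disjoint $3$-balls, developing maps converting the metric problem into an embedding problem, Hatcher via Lemma~\ref{lem_embedding_restriction_fiber_bundle}, and induction over skeleta), but there is a genuine gap at the key extension step. Extending an embedding from the collar $A_{\sigma,i}$ is \emph{not} the same as producing a lift through the restriction map $\Embed_+(D^3,S^3)\to\Embed(S^2,S^3)$: what Lemma~\ref{lem_embedding_restriction_fiber_bundle}(b) gives you, for interior parameters $p\in\sigma\setminus\partial\sigma$, is an embedding $\wh\varphi_{\sigma,i}(p)$ of the ball agreeing with $\varphi_{\sigma,i}(p)$ only on the $2$-sphere $\partial D_{\sigma,i}$, not on the collar. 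Agreement of the maps on the sphere says nothing about the normal derivatives, so the pulled-back round metric $\wh\varphi_{\sigma,i}(p)^*g_{S^3}$ need not coincide with $g(p)$ even pointwise along $\partial D_{\sigma,i}$, and ``gluing to $g(p)|_{Z_\sigma}$'' does not yield a continuous, let alone smooth, metric; your ``i.e.\ a lift through the restriction map'' identification is where this slips in. This is precisely what the paper's Lemma~\ref{lem_extending_metrics_ball} adds: after applying Lemma~\ref{lem_embedding_restriction_fiber_bundle}(b), it precomposes the extension with a continuous family of diffeomorphisms of $D^3$ fixing $\partial D^3$ (an interpolation supported near the boundary, after shrinking $\rho$) so that the corrected embedding agrees with the prescribed one on a whole collar $N_\rho(S^2)\cap D^3$, while remaining unchanged for boundary parameters. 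You need either this correction or a collar version of the fibration statement; as written, your induction step fails.

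A secondary but substantial omission: the combinatorial input, namely the existence of the subdivision and of the nested family $Z_\sigma$ with $\ol{X\setminus Z_\sigma}$ a disjoint union of boundedly many $3$-balls and with simply connected overlap regions that isometrically embed in $S^3$, is essentially assumed. This is where the hypotheses $|S_p|\le n$ and the injectivity radius of $(X,g_X)$ actually enter: the paper's Lemma~\ref{lem_combinatorial_assignments} proves it by an induction on skeleta with quantitative ball-merging (Lemma~\ref{lem_preparatory_stuff}), keeping the number of balls bounded by $c_k n$ and their radii below the injectivity radius so that merged balls are still genuine $3$-disks. Your one-line recipe (``choose $Z_\tau$ for top-dimensional simplices first and enlarge for lower faces'') does not address how the ball-complement property survives when data from the several faces meeting along a lower-dimensional simplex is merged, and it also has an ordering problem: if $Z_\tau$ is chosen only inside $\bigcap_{p\in V_\tau}W_p$, the inclusion $Z_\tau\subset W_p$ required for all $p\in V_\sigma\supset V_\tau$ with $\sigma\prec\tau$ is not automatic. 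Finally, a minor point: injectivity of your developing maps comes from lifting the global isometric embedding into $X\setminus S_p$ (respectively the universal covering of $(X,\wh g(p))$), not merely from having a $K\equiv 1$ metric on a simply connected domain, and each ball needs its own basepoint/frame normalization chosen in the common collar so that the boundary lifts really do lift $\varphi_{\sigma,i}$.
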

We remark that we will only apply Proposition~\ref{prop_extending_metrics_general} in the case when $P_0$ is an $(m+1)$-disk and $Q_0=\D P_0$ is its boundary $m$-sphere, for some $m\geq 0$.

Before proceeding with the proof, we first provide some motivation for the proof.   

Take $p\in P_0$, and consider the open subset $W_p\subset X$.  Since $(W_p,g(p))$ is isometric to $(X\setminus S_p,g_X)$, the ends of $W_p$ are diffeomorphic to $S^2\times [0,\infty)$, and hence there is a compact domain with smooth boundary $Z_p\subset W_p$ with $2$-sphere boundary components, such that $\Int Z_p$ is diffeomorphic to $X\setminus S_p$.  By a simple topological argument, the closure of $X\setminus Z_p$ is a union of a disjoint collection $\C_p$ of $3$-disks.  It is not hard to see that for every $3$-disk $Y\in \C_p$, the restriction of $g(p)$ to a small neighborhood of $\D Y$ in $Y$ extends to a metric on $Y$ with $K\equiv 1$.  Moreover, by using Hatcher's theorem, one can see that the extension is unique, up to contractible ambiguity. Combining the extensions for each $Y\in \C_p$, we obtain an extension of $g(p)\restr Z_p$ to $X$, which is also unique up to contractible ambiguity.  

To adapt the preceding observations into a proof of the proposition, we first choose a fine subdivision $P$ of $P_0$, and for every face $\si$ of $P$, we choose a domain with smooth boundary $Z_\si\subset X$ such that (among other things) $ Z_\si\subset W_p$ for every $p\in \si$, and $Z_\si=X$ if $p\in Q_0$.  We then prove the proposition by extending $g$ inductively over the skeleta of $P$.  In the induction step, we assume that $\wh g$ has been defined on the $m$-skeleton of $P$, and then we extend it to an $(m+1)$-face $\si\subset P$, such that it agrees with $g$ on $Z_\si$.  Since the closure of $X\setminus Z_\si$ is a disjoint collection of $3$-disks, our problem reduces  to solving an extension problem for $3$-disks, which may be deduced from Hatcher's theorem.

For the remainder of the section we fix an assignment $P_0\ni p\mapsto (W_p,g(p))$ as in the statement of Proposition~\ref{prop_extending_metrics_general}.

The first step in the proof of Proposition~\ref{prop_extending_metrics_general} is to define the subdivision $P$ and the collection $\{Z_\si\}_{\si\in\face P}$ described above.

\begin{lemma}
\label{lem_combinatorial_assignments}
There is a subdivision $P$ of $P_0$, and to every face $\si$ of $P$ we can assign  a triple $(Z_\si,\C_\si,U_\si)$ with the following properties:
\begin{enumerate}[label=(\alph*)]
\item For every face $\si$ of $P$, $\C_\si$ is a finite disjoint collection of $3$-disks in $X$, and  $Z_\si=\ol{X\setminus \cup_{Y\in\C_\si}Y}$.
\item For every strict inclusion of faces $\si_1\subsetneq \si_2$ we have $Z_{\si_2}\subset \Int(Z_{\si_1})$. 
\item For every face $\si$ of $P$, and every $p\in \si$, we have:
	\begin{enumerate}[label=(\roman*)]
	\item $U_\si$ is an open subset of $X$ with $Z_\si\subset U_\si\subset W_p$. 
	\item $(U_\si\cap Y,g(p)) $ isometrically embeds in $(S^3,g_{S^3})$ for every $Y\in \C_\si$.  
	\item If $\si\cap Q_0\neq\emptyset$, then $Z_\si=X$.
	\een
\een
\end{lemma}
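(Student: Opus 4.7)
\emph{Proof plan.} The plan is to construct, at each $p\in P_0$, a nested one-parameter family of candidate triples indexed by a scale $r>0$, then extend these into a global assignment via a sufficiently fine subdivision of $P_0$ and a compactness argument. I first set $Q_0^+:=\{p\in P_0 \mid W_p=X\}$, which is open in $P_0$ ($X$ itself is a compact subset of $W_p=X$, so continuity in $\partmet(X)$ forces $W_q=X$ for $q$ near $p$), and $Q_0\subset Q_0^+$. For $p\in Q_0^+$, set $(Z_{p,r},\C_{p,r},U_{p,r}):=(X,\emptyset,X)$ for every $r$. For $p\notin Q_0^+$, use the isometry $\phi_p:(W_p,g(p))\to(X\setminus S_p,g_X)$ to pull back the geodesic sphere $\partial B^{(X,g_X)}(s,r)$ to a smoothly embedded $2$-sphere $\Sigma_{p,s,r}\subset W_p\subset X$ for each $s\in S_p$. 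Since $X$ is an irreducible spherical space form with $X\neq S^3$, $\Sigma_{p,s,r}$ bounds a unique $3$-disk $Y_{p,s,r}\subset X$. A topological argument (using the structure of the ends of $W_p$ as punctures of $X\setminus S_p$) identifies this disk as the side containing $\phi_p^{-1}(B(s,r)\setminus\{s\})$, since the other side contains an open subset of $W_p$ isometric to $X$ minus a $3$-ball and finite points and hence carrying nontrivial $\pi_1$, so it cannot lie in a $3$-ball. For $r$ small the disks $Y_{p,s,r}$ are pairwise disjoint, so one sets $\C_{p,r}:=\{Y_{p,s,r}\}_s$, $Z_{p,r}:=\ol{X\setminus\cup \C_{p,r}}$, and $U_{p,r}:=\phi_p^{-1}(X\setminus\cup_s\ol{B(s,r/2)})$. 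Each $U_{p,r}\cap Y_{p,s,r}$ is isometric via $\phi_p$ to $\ol{B(s,r)}\setminus\ol{B(s,r/2)}\subset(X,g_X)$, a simply connected region of small diameter, which isometrically embeds into $(S^3,g_{S^3})$ by the developing map. Monotonicity in $r$ is immediate: $r<r'$ gives $Y_{p,s,r}\subset\Int(Y_{p,s,r'})$ and so $Z_{p,r'}\subset\Int(Z_{p,r})$.

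\emph{Cover and assignment.} From the continuity of $p\mapsto(W_p,g(p))$ and the stability of the relevant conditions under small $C^\infty$ perturbations of $g(p)$, for each $p\in P_0$ and each small $r$ there is an open neighborhood $N(p,r)\subset P_0$ of $p$ on which the fixed triple $(Z_{p,r},\C_{p,r},U_{p,r})$ satisfies (a) and (c)(i)--(ii) for every $q\in N(p,r)$ (with $g(q)$); moreover, by shrinking $N(p,r)$ the disks $Y_{q,s_q,r}$ defined from $\phi_q$ become arbitrarily Hausdorff-close to $Y_{p,s,r}$. By compactness I extract a finite cover $\{N_\alpha\}$ of $P_0$. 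I then fix $d:=\dim P_0$ and radii $r_0<r_1<\cdots<r_d$ with large multiplicative gaps, all smaller than every $r_\alpha$. Using the Lebesgue number lemma and standard simplicial techniques, choose a subdivision $P$ of $P_0$ such that $Q_0$ is a subcomplex; every simplex of $P$ meeting $Q_0$ is contained in $Q_0^+$ (possible since $Q_0$ is compact and $Q_0^+$ open); and the closed star $\mathrm{st}_P(\sigma)$ of each simplex $\sigma$ is contained in some $N(p_{j(\sigma)},r_d)$ from the cover. For $\sigma\subset Q_0^+$ assign the trivial triple $(X,\emptyset,X)$; otherwise assign $(Z_{p_{j(\sigma)},r_{\dim\sigma}},\C_{p_{j(\sigma)},r_{\dim\sigma}},U_{p_{j(\sigma)},r_{\dim\sigma}})$.

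\emph{Main obstacle (nesting).} The principal difficulty is verifying (b) when $\sigma_1\subsetneq\sigma_2$ both lie outside $Q_0^+$ and the assigned reference points $p_{j(\sigma_1)}\neq p_{j(\sigma_2)}$ differ. The resolution uses that closed stars are anti-monotone in the face poset: $\sigma_1<\sigma_2$ gives $\mathrm{st}_P(\sigma_2)\subset\mathrm{st}_P(\sigma_1)\subset N(p_{j(\sigma_1)},r_d)$, so both reference points lie in a common cover element, and the disk systems at $p_{j(\sigma_1)}$ and $p_{j(\sigma_2)}$ are Hausdorff-close at any common scale. Provided the multiplicative gap between consecutive radii $r_k$ dominates the maximum Hausdorff perturbation of the disks over all cover elements, the strict inequality $r_{\dim\sigma_1}<r_{\dim\sigma_2}$ forces $\cup\C_{\sigma_1}\subset\Int(\cup\C_{\sigma_2})$, equivalently $Z_{\sigma_2}\subset\Int(Z_{\sigma_1})$. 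The remaining cases (where some $\sigma_i\subset Q_0^+$) are immediate since then one of the $Z$'s equals $X$.
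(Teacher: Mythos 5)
Your per-parameter construction is essentially sound and close in spirit to what the paper does at the vertex level: the thick part $T^p_r=\phi_p^{-1}(X\setminus\cup_{s\in S_p}B(s,r))$ is a compact domain diffeomorphic to $X$ minus finitely many balls, so (by the argument the paper packages as Lemma~\ref{lem_preparatory_stuff}(1): if such a domain sat inside a $3$-disk it would embed in $S^3$, forcing $X\cong S^3$) its complementary components are $3$-disks, which are exactly your $Y_{p,s,r}$; note that your stated justification (``carries nontrivial $\pi_1$, so it cannot lie in a $3$-disk'') is circular as written, since nontriviality of $\pi_1(A)\to\pi_1(X)$ for the embedded copy $A\subset X$ is precisely what is not known a priori, and the disjointness of the $Y_{p,s,r}$ also needs this argument rather than being automatic. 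These points are fixable. The genuine gap is in the nesting step (b). Your mechanism rests on the claim that, after shrinking $N(p,r)$, the disks $Y_{q,s_q,r}$ built from $\phi_q$ are Hausdorff-close to the disks $Y_{p,s,r}$. This is false in general: the map $p\mapsto(W_p,g(p))$ is only ``lower semicontinuous'' in the domain (the paper's Remark after Definition~\ref{def_topology_partially_defined_metrics}), so for $q$ arbitrarily close to $p$ the puncture set $S_q$ can have different cardinality and the ends of $W_q$ can sit in completely uncontrolled positions inside the $p$-disks; the scale-$r$ disk system of $q$ need not be close to that of $p$ at the same scale, so there is no ``maximum Hausdorff perturbation'' for your radius gaps to dominate. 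What is true is only a one-sided statement with a scale gap: if the thick part of $p$ at scale $r$ lies in $W_q$ with comparable metrics, then the disks of $q$ at scales $\ll r$ lie inside the interiors of the disks of $p$ at scale $r$. But this relates $q$'s \emph{small}-scale disks to $p$'s \emph{large}-scale disks, i.e.\ it would require the reference point of the \emph{smaller} face to be controlled by (lie in a suitable neighborhood determined by) the reference point of the \emph{larger} face, and your scheme provides no such relation: for $\sigma_1\subsetneq\sigma_2$ you only know $\mathrm{st}(\sigma_2)\subset N(p_{j(\sigma_1)},\cdot)\cap N(p_{j(\sigma_2)},\cdot)$, which does not place either center in the other's cover element (your phrase ``both reference points lie in a common cover element'' does not follow), and one can arrange configurations consistent with all your stated requirements in which the bad set of $p_{j(\sigma_1)}$ is spatially huge while the disks of $p_{j(\sigma_2)}$ are small and located inside it, so that $\cup\C_{\sigma_1}\not\subset\Int(\cup\C_{\sigma_2})$.

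This is why the paper does not assign the collections of different faces independently at unrelated reference points: it constructs $\C_\sigma$ for an $m$-face inductively from the collections of its $(m-1)$-faces, by taking the union of those disk systems, merging overlapping balls (Lemma~\ref{lem_preparatory_stuff}(2)), and re-filling to disks (Lemma~\ref{lem_preparatory_stuff}(1)), so that the covering property forcing $Z_{\si_2}\subset\Int Z_{\si_1}$ holds by construction; the price is the bookkeeping of the constants $c_k$, $c_k'(n)$, $\ov r(k,n)$ controlling how the disks grow with the skeletal dimension, which uses the uniform bound $n$ on $|S_p|$ from hypothesis (ii). To repair your argument you would need either such an inductive construction or an explicit constraint tying $p_{j(\sigma_1)}$ to $p_{j(\sigma_2)}$ strong enough to run the one-sided containment above; as written, property (b) is not established.
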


To prove Proposition~\ref{prop_extending_metrics_general} using the combinatorial structure from Lemma~\ref{lem_combinatorial_assignments}, the main ingredient is the following local extension lemma, which is based on  (a version of) the Smale conjecture, as proved by Hatcher \cite{hatcher_smale_conjecture}.  

\begin{lemma}[Extending $K\equiv 1$ metrics over a ball]
\label{lem_extending_metrics_ball}
In the following, we let $S^2$ and $D^3$ denote the unit sphere and unit disk in $\R^3$, respectively, and we let $N_r(S^2)$ denote the metric $r$-neighborhood of $S^2\subset \R^3$.

Suppose $m\geq 0$, $\rho>0$ and: 
\begin{enumerate}[label=(\roman*)]
\item $h_{m+1}:D^{m+1}\ra \met_{K\equiv 1}(N_{\rho}(S^2)\cap D^3)$ is a continuous map such that for all $p\in D^{m+1}$, the Riemannian manifold $(N_{\rho}(S^2)\cap D^3,h_{m+1}(p))$ isometrically embeds in $(S^3,g_{S^3})$.  Here $\met_{K\equiv 1}(N_{\rho}(S^2)\cap D^3)$ is equipped with the $C^\infty_{\loc}$-topology.
\item $\wh h_m:S^m\ra\met_{K\equiv 1}(D^3)$ is a continuous map such that for every $p\in S^m$ we have $\wh h_m(p)=h_{m+1}(p)$ on $N_{\rho}(S^2)\cap D^3$, and $(D^3,h_m(p))$ isometrically embeds in $(S^3,g_{S^3})$.
\een

Then, after shrinking $\rho$ if necessary, there is a continuous map $\wh h_{m+1}:D^{m+1}\ra \met_{K\equiv 1}(D^3)$ such that:
\begin{enumerate}[label=(\alph*)]
\item $\wh h_{m+1}(p)=h_{m+1}(p)$ on $N_{\rho}(S^2)\cap D^3$ for all $p\in D^{m+1}$.
\item $\wh h_{m+1}(p)=h_m(p)$ for all $p\in S^m$.
\een
\end{lemma}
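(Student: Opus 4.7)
The plan is to convert the metric extension problem into an extension problem for embeddings into $(S^3,g_{S^3})$, and then apply a variant of Lemma~\ref{lem_embedding_restriction_fiber_bundle} whose proof follows Hatcher's theorem in the same way.

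First I would promote the continuous family of metrics to a continuous family of isometric embeddings via developing maps. Fix once and for all a basepoint $x_0\in N_\rho(S^2)\cap D^3$, a basepoint $\bar x_0\in S^3$ with an orthonormal frame $\bar e_0$, and a background frame $e_0^{\mathrm{bg}}$ at $x_0$. For $p\in D^{m+1}$, apply Gram--Schmidt to $e_0^{\mathrm{bg}}$ with respect to $h_{m+1}(p)$ to obtain an $h_{m+1}(p)$-orthonormal frame, and let $\phi(p)\colon N_\rho(S^2)\cap D^3\to S^3$ be the developing map sending $x_0$ to $\bar x_0$ and this frame to $\bar e_0$. Since $h_{m+1}(p)$ has constant sectional curvature $1$ and $N_\rho(S^2)\cap D^3$ is simply connected, $\phi(p)$ is a well-defined isometric local diffeomorphism, and hypothesis~(i) together with monodromy makes $\phi(p)$ an embedding. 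Parameter-dependence of ODEs gives continuity $p\mapsto\phi(p)$, and $\bar e_0$ is chosen so that $\phi(p)$ is orientation-preserving. For $p\in S^m$, the developing map $\wh\phi_m(p)\colon (D^3,\wh h_m(p))\to S^3$ with the same initial data is a continuous family of orientation-preserving isometric embeddings, and the compatibility $\wh h_m(p)=h_{m+1}(p)$ on $N_\rho(S^2)\cap D^3$ combined with uniqueness of developing maps yields $\wh\phi_m(p)|_{N_\rho(S^2)\cap D^3}=\phi(p)$.

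Next I would extend $\wh\phi_m$ to a continuous lift over all of $D^{m+1}$. The key ingredient is that for any $\rho'\in(0,\rho)$, the restriction map
\[
\pi'\colon\Embed_+(D^3,S^3)\longrightarrow\Embed(N_{\rho'}(S^2)\cap D^3,S^3)
\]
is a locally trivial fiber bundle with contractible fiber. Local trivializations follow from the isotopy-extension argument in the proof of Lemma~\ref{lem_embedding_restriction_fiber_bundle}(a), using normal-exponential isotopies of $S^3$. For the fiber over $g_0$, the complement $\overline{D^3\setminus N_{\rho'}(S^2)}$ is a $3$-disk, and any orientation-preserving extension of $g_0$ to $D^3$ must embed this inner $3$-disk into the $3$-disk $D''\subset S^3$ bounded by $g_0(\{|x|=1-\rho'\})$ on the side opposite the image of $g_0$; the space of such fillings is homeomorphic to $\Diff(D^3\operatorname{rel}\partial D^3)$ and hence contractible by Hatcher's theorem. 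The same obstruction-theoretic argument as in Lemma~\ref{lem_embedding_restriction_fiber_bundle}(b) then produces a continuous $\wh\phi\colon D^{m+1}\to\Embed_+(D^3,S^3)$ with $\wh\phi|_{S^m}=\wh\phi_m$ and $\wh\phi(p)|_{N_{\rho'}(S^2)\cap D^3}=\phi(p)$ for all $p$. Setting $\wh h_{m+1}(p):=\wh\phi(p)^*g_{S^3}$ and replacing $\rho$ by $\rho'$ yields the desired continuous family of $K\equiv 1$ metrics: condition~(a) holds by construction on $N_{\rho'}(S^2)\cap D^3$, and condition~(b) follows from $\wh\phi|_{S^m}=\wh\phi_m$.

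The principal subtlety is the strengthened fibration $\pi'$, in particular the identification and contractibility of its fiber via the inner-ball extension problem; the rest of the argument is bookkeeping. An alternative route that avoids this variant is to apply Lemma~\ref{lem_embedding_restriction_fiber_bundle} directly to produce an initial extension $\wh\phi^0$ matching $\phi$ on $S^2$ only, and then modify $\wh\phi^0$ by right-composition with a continuous family of self-diffeomorphisms of $D^3$ (the identity on $S^m$ and supported near $\partial D^3$) that absorbs the discrepancy $(\wh\phi^0(p))^{-1}\circ\phi(p)$ near $S^2$; however, this route requires a bump-function isotopy construction with controlled dependence on $p$, which is more delicate than proving the fibration property of $\pi'$ directly.
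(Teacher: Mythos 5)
Your first step---replacing the families of constant-curvature metrics by families of normalized isometric embeddings into $(S^3,g_{S^3})$, using frames at a basepoint and continuity of developing maps---is essentially the paper's construction (the paper normalizes the given embeddings by a Gram--Schmidt frame at a point of $S^2$; same content). Where you genuinely diverge is the lifting step: the paper applies Lemma~\ref{lem_embedding_restriction_fiber_bundle}, i.e.\ the fibration $\Embed_+(D^3,S^3)\ra\Embed(S^2,S^3)$ given by restriction to the boundary sphere \emph{only}, and then repairs the resulting mismatch on the rest of the collar by precomposing with a family of diffeomorphisms of $D^3$ supported near $\D D^3$, built by an explicit bump-function interpolation. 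That is exactly the ``alternative route'' you mention at the end and set aside as more delicate. Your main route instead strengthens the fibration to restriction to a whole collar, which eliminates the correction step.

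As written, though, your key claim has a gap: with the $C^\infty_{\loc}$ topology, $\pi'\colon\Embed_+(D^3,S^3)\ra\Embed(N_{\rho'}(S^2)\cap D^3,S^3)$ is \emph{not} a locally trivial fiber bundle. The collar $N_{\rho'}(S^2)\cap D^3=\{1-\rho'<|x|\le 1\}$ is not compact, and $C^\infty_{\loc}$-closeness gives no control near the inner sphere $\{|x|=1-\rho'\}$: there are collar embeddings arbitrarily close (on compact subsets) to a restriction of a disk embedding which do not extend to $D^3$ at all, so nearby fibers can be empty, and the isotopy-extension construction of a trivializing map $\Phi\colon N(f_0)\ra\Diff(S^3)$ fails because one cannot align images near the uncontrolled inner end. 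The repair is straightforward and preserves your architecture: restrict instead to a \emph{compact} sub-collar $A=\{1-\rho''\le|x|\le 1\}$ with $0<\rho''<\rho$; since $A$ is a compact subset of $N_\rho(S^2)\cap D^3$, your family $\phi(p)$ is continuous in the uniform $C^\infty$ topology on $A$, every embedding of $A$ extends over the inner ball by smooth Schoenflies, and the restriction map $\Embed_+(D^3,S^3)\ra\Embed(A,S^3)$ is a fibration by parametrized isotopy extension (Palais--Cerf), proved just as in Lemma~\ref{lem_embedding_restriction_fiber_bundle}(a). Note also that the fiber over $g_0$ consists of embeddings of the inner ball agreeing with $g_0$ to \emph{infinite order} along the inner sphere (not merely pointwise), so after your identification of images it is the group of diffeomorphisms of $D^3$ equal to the identity to infinite order at the boundary; this is homotopy equivalent to $\Diff(D^3\,\mathrm{rel}\,\D D^3)$ by a standard collar argument, hence contractible by Hatcher. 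With these adjustments your argument goes through and yields a somewhat cleaner endgame than the paper's; the trade-off is that you need the restriction-to-collar fibration and the jet bookkeeping in place of the paper's elementary interpolation near $\D D^3$.
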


\bigskip
\begin{proof}[Proof of Proposition~\ref{prop_extending_metrics_general} assuming Lemmas~\ref{lem_combinatorial_assignments} and \ref{lem_extending_metrics_ball}]
Let $P$ and the assignment $\face P\ni \si\mapsto(Z_\si,\C_\si,U_\si)$ be as in Lemma~\ref{lem_combinatorial_assignments}. Let $Q$ be the corresponding subdivision of $Q_0$.

Pick $m+1\geq 0$ with $m+1\leq \dim (P\setminus Q)$.  Assume inductively that if $m\geq 0$, then we have defined a continuous map $\wh g_m:P^{(m)}\cup Q\ra \met_{K\equiv 1}(X)$ such that for every face $\tau\subset P^{(m)}\cup Q$, and every $p\in \tau$, the metric $\wh g_m(p)$ agrees with $g(p)$ on $Z_\tau\subset W_p$.

Pick an $(m+1)$-face $\si\subset P^{(m+1)}$ with $\si\not\subset Q$, and choose $Y\in \C_\si$.  We wish to  apply Lemma~\ref{lem_extending_metrics_ball}.  To that end we choose a diffeomorphism $\al: D^3\ra Y$, and a homeomorphism $\be:D^{m+1}\ra \si$.  If $\rho>0$ is sufficiently small, then $\al(N_\rho(S^2)\cap D^3)\subset \cap_{\tau\subsetneq\si} Z_\tau$ by assertion (b) of Lemma~\ref{lem_combinatorial_assignments}.  Since $\cap_{\tau\subsetneq \si}Z_\tau\subset U_\si\subset\cap_{p\in \D\si}W_p$ by assertion (c) of Lemma~\ref{lem_combinatorial_assignments}, we may define continuous maps
$$
h_{m+1}:D^{m+1}\lra \met_{K\equiv 1}(N_\rho(S^2)\cap D^3)\,,\qquad \wh h_m:S^m\lra \met_{K\equiv 1}(D^3)
$$
by $h_{m+1}(p):=(\al\restr N_\rho(S^2)\cap D^3)^*g(\beta(p))$, $\wh h_m(p):=\al^*\wh g_m(\beta(p))$.    Note that for every $p\in S^m$, the Riemannian manifold $(D^3,\wh h_m(p))$ isometrically embeds in $(X,g_X)$ by construction, and since $D^3$ is simply connected, this embedding may be lifted to an isometric embedding $(D^3,\wh h_m(p))\ra (S^3,g_{S^3})$.  Similarly, for every $p\in D^{m+1}$, the manifold $(N_\rho(S^2)\cap D^3,h_{m+1}(p))$ isometrically embeds in $(S^3,g_{S^3})$ by assertion (c)(ii) of Lemma~\ref{lem_combinatorial_assignments}.

Applying Lemma~\ref{lem_extending_metrics_ball}, after shrinking $\rho$, we obtain a continuous map $\wh h_{m+1}:D^{m+1}\ra \met_{K\equiv 1}(D^3)$ such that $\wh h_{m+1}(p)=\wh h_m(p)$ for all $p\in S^m$, $\wh h_{m+1}(p)=h_{m+1}(p)$ on $N_\rho(S^2)\cap D^3$ for all $p\in D^{m+1}$.  Now let $g_{\si,Y}:\si\ra \met_{K\equiv 1}(Y)$ be given by $g_{\si,Y}(\be(p))=\al_*\wh h_{m+1}(p)$.  

We may extend $\wh g_m$ to a continuous map $\wh g_{m+1}:P^{(m+1)}\cup Q\ra\met_{K\equiv 1}(X)$ by letting $\wh g_{m+1}\restr\si$ agree with $g_{\si,Y}$ on $Y$ for each $Y \in \C_\sigma$.  Note that by construction, for every face $\si$ of $P^{(m+1)}$ and every $p\in \si$, $\wh g_{m+1}(p)$ agrees with $g(p)$ on $Z_\si$, and hence on $X$ if $p\in Q$, by assertion (c)(iii) of Lemma~\ref{lem_combinatorial_assignments}.

By induction we obtain the desired map $\wh g:P\ra\met_{K\equiv 1}(X)$. 
\end{proof}

\bigskip\bigskip
We now prove Lemmas~\ref{lem_combinatorial_assignments} and \ref{lem_extending_metrics_ball}.  Before proving Lemma~\ref{lem_combinatorial_assignments}, we need a preparatory result.

\begin{lemma}
\label{lem_preparatory_stuff}

\mbox{}
\ben
\item Let  $\C$ be a finite disjoint collection of $3$-disks in $X$, and $Z\subset X$ be a domain with smooth boundary diffeomorphic to $\ol{X\setminus\cup_{Y\in \C}Y}$.   Then there is a (unique) finite disjoint collection of $3$-disks $\C'$ such that $Z=\ol{X\setminus\cup_{Y\in \C'}Y}$.
\item Let $\C$ be a collection of at most $j$ closed balls of radius at most $ r$ in a metric space $Z$.  Then there is a disjoint collection $\C'$ of at most $j$ closed balls of radius $<4^jr$ such that $\{\Int Y\}_{Y\in \C'}$ covers $\cup_{Y\in \C}Y$.
\een
\end{lemma}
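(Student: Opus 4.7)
The argument rests on two facts about $X$: that $X$ is irreducible (every spherical space form is irreducible) and that $\pi_1(X)\neq 1$ (since $X\neq S^3$). The diffeomorphism $Z\cong \overline{X\setminus \cup_{Y\in\C}Y}$ tells us that $Z$ is connected, $\partial Z$ is a disjoint union of $|\C|$ many $2$-spheres, and $\pi_1(Z)\cong \pi_1(X)\neq 1$. For each component $\Sigma\subset \partial Z$, irreducibility of $X$ together with $X\neq S^3$ provides a unique $3$-disk $B_\Sigma\subset X$ with $\partial B_\Sigma=\Sigma$. I would first establish $B_\Sigma\cap Z=\Sigma$: the $2$-sphere $\Sigma$ separates $X$ into $\Int B_\Sigma$ and $X\setminus B_\Sigma$, and the connected open set $\Int Z$ lies in one of these; the $\Int B_\Sigma$ side is ruled out because $Z\subset B_\Sigma$ would force $\pi_1(Z)=1$. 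Consequently $B_\Sigma\subset \overline{X\setminus Z}$ and $B_\Sigma\cap \partial Z=\Sigma$.

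\textbf{Completing (1).} Let $W$ be the component of $\overline{X\setminus Z}$ containing $\Sigma$; connectedness of $B_\Sigma$ gives $B_\Sigma\subset W$. To show $W=B_\Sigma$ I would argue by separation. Near any $p\in \Sigma$, the $3$-disk $B_\Sigma$ occupies the entire ``$W$-side'' of a small tubular neighborhood of $\Sigma$ in $X$ (because $\partial B_\Sigma=\Sigma$ and $\Int B_\Sigma\subset \Int W$), and combined with $B_\Sigma\cap \partial Z=\Sigma$ this forces $\overline{W\setminus B_\Sigma}$ to be disjoint from $\Sigma$; since $\Int B_\Sigma$ is open we likewise get $\overline{W\setminus B_\Sigma}\cap \Int B_\Sigma=\emptyset$, hence $\overline{W\setminus B_\Sigma}\cap B_\Sigma=\emptyset$. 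Then $W=B_\Sigma\sqcup (W\setminus B_\Sigma)$ is a separation of $W$ into two sets with disjoint closures, so connectedness of $W$ forces $W=B_\Sigma$. Thus every component of $\overline{X\setminus Z}$ is a $3$-disk of the form $B_\Sigma$, and we take $\C'$ to be the set of these components; uniqueness is automatic.

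\textbf{Plan for (2) and main obstacle.} For (2) I would iteratively merge overlapping balls: as long as two closed balls $B(x_1,r_1),B(x_2,r_2)$ (with $r_1\geq r_2$) in the current collection intersect, replace them by $B(x_1,\rho)$ where $\rho=r_1+2r_2+\delta$ and $\delta>0$ is chosen so small that $\rho<4r_1$. The triangle inequality $d(x_1,x_2)\leq r_1+r_2$ gives $B(x_1,r_1)\cup B(x_2,r_2)\subset \{z:d(x_1,z)<\rho\}\subset \Int B(x_1,\rho)$, so the covering of $\cup_{Y\in\C}Y$ by interiors of the current collection is preserved. Each merge strictly decreases the cardinality and multiplies the maximum radius by a factor strictly less than $4$, so after at most $j-1$ steps we have a pairwise disjoint collection of at most $j$ balls of radius $<4^jr$; a final slight enlargement, chosen to preserve disjointness and the radius bound, upgrades the cover from closed balls to open interiors. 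The only nontrivial step in the whole lemma is the local separation argument in part~(1), where one must use carefully that $\Sigma$ lies simultaneously in $\partial W$ and in $B_\Sigma$ in order to deduce that a neighborhood of $\Sigma$ in $W$ is contained in $B_\Sigma$.
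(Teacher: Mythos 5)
Your part (2) is fine and is essentially the paper's argument (merge any two intersecting closed balls into one centered at the larger one, losing a factor of at most $4$ in the radius at each of at most $j-1$ merges). The problem is in part (1), at exactly the step you yourself flag as the crux. You rule out the possibility $\Int Z\subset \Int B_\Sigma$ by asserting that ``$Z\subset B_\Sigma$ would force $\pi_1(Z)=1$.'' That inference is false as stated: a subset of a $3$-disk can have arbitrary fundamental group (a solid torus or a knot complement inside a ball, for instance), so knowing $\pi_1(Z)\cong\pi_1(X)\neq 1$ abstractly does not contradict $Z\subset B_\Sigma$. What you would actually need is that the inclusion $Z\hookrightarrow X$ is $\pi_1$-injective, for then factoring it through the simply connected $B_\Sigma$ would indeed force $\pi_1(Z)=1$. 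That injectivity is true here, but it requires an argument (e.g.\ van Kampen applied to the decomposition of $X$ along the boundary spheres of $Z$: since $X$ is irreducible and orientable every embedded $2$-sphere separates, the dual graph is a tree, and $\pi_1(X)$ is the free product of $\pi_1(Z)$ with the groups of the complementary pieces). Without some such input, the decisive step of your proof is unsupported, and it is precisely the nontrivial content of the lemma.

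For comparison, the paper avoids the per-sphere analysis and argues globally: if some complementary component $Y_0$ were not a $3$-disk, then $X\setminus\Int Y_0$ would be a $3$-disk, so $Z$ would embed in $S^3$; since $Z$ is abstractly $X$ with finitely many open balls removed, capping off the boundary spheres of the embedded copy (using that every smooth $2$-sphere in $S^3$ bounds balls on both sides) produces a diffeomorphism $X\to S^3$, contradicting $X\neq S^3$. Either route works; if you want to keep your local argument, insert the $\pi_1$-injectivity (or the embed-in-$S^3$ contradiction) to justify that $\Int Z$ lies on the non-disk side of $\Sigma$. The remainder of your part (1) — the single-sided collar/separation argument showing the component $W$ equals $B_\Sigma$ — is sound once that step is repaired.
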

\begin{proof}
(1).  Recall that $X$ is irreducible and not diffeomorphic to $S^3$,  so every embedded $2$-sphere in $X$ bounds two domains with boundary, precisely one of which is a $3$-disk.  

Let $\C'$ be the collection of closures of the components of $X\setminus Z$.  Suppose  that $Y_0\in\C'$ is not diffeomorphic to a $3$-disk.  Then $X\setminus\Int Y_0$ is a $3$-disk, and hence $Z\subset X\setminus\Int Y_0$ embeds in $S^3$.  But then the embedding $Z\ra S^3$ may be extended to a diffeomorphism $X\ra S^3$ by extending over the $3$-ball components of $X\setminus Z$; this is a contradiction. Therefore $\C'$ is a collection of $3$-disks, and assertion (1) follows.

(2).  This follows by induction on $j$, by replacing a pair of balls $\ol{B(x_1,r_1)}$, $\ol{B(x_2,r_2)}$ with $r_1\leq r_2$ such that $\ol{B(x_1,r_1)}\cap \ol{B(x_2,r_2)}\neq \emptyset$ with $\ol{B(x_2,4r_2)}$.
\end{proof}

\bigskip
\begin{proof}[Proof of Lemma~\ref{lem_combinatorial_assignments}]
Let $r_X>0$ be injectivity radius of $(X,g_X)$, and let $r\in(0,r_X)$ be a constant to be determined later.

For $p\in D^{m+1}$ let $d_p$ be the Riemannian distance function for $(W_p,g(p))$, and let $(\ov W_p,\ov d_p)$ be the completion of the metric space $(W_p,d_p)$.  By assumption (ii) of Proposition~\ref{prop_extending_metrics_general}, we know that $(\ov W_p,\ov d_p)$ is isometric to $(X,d_X)$.  

For each $p\in P_0$, we let $Z^0_p:=\ov W_p\setminus \cup_{x\in \ov W_p\setminus W_p}B_{\ov d_p}(x,r)$, so $Z^0_p\subset W_p\subset X$.  By the definition of the topology on $\partmet(X)$ and condition (i) of Proposition~\ref{prop_extending_metrics_general}, for all $p\in P_0$ there is an open subset $V_p\subset P_0$ such that for all $p'\in V_p$, we have $Z^0_p\subset W_{p'}$, and $2^{-1}g(p)< g(p')< 2g(p)$ on $Z^0_p$.  Let $P$ be a subdivision of $P_0$ such that the closed star cover $\{\ol{\St}(v,P)\}_{v\in P^{(0)}}$ refines the cover $\{V_p\}_{p\in P_0}$.  For every vertex $v\in P^{(0)}$ choose $p_v\in P$ such that $V_{p_v}\supset \ol{\St}(v,P)$, and let $Z^1_v:=Z^0_{p_v}$.  After refining $P$ if necessary, we may assume $Z^1_v=X$  for every vertex $v$ with $\ol{\St}(v,P)\cap Q\neq\emptyset$.  

For every face $\si$ of $P$, choose $b_\si\in \si$, and let $Z^1_\si:=\cup_{v\in \si^{(0)}}Z^1_v$.  Note that if $p\in\si$, then $Z^1_\si\subset W_p$, and $2^{-1}g(b_p)< g(p)< 2 g(b_p)$ on $Z^1_\si$.  

\begin{claim*} 
(Let $n$ be the constant from Proposition~\ref{prop_extending_metrics_general}(ii)). There exist universal constants $\{c_k\}_{k\geq 0}$, $\{c_k'(n)\}_{k\geq 0}$, $\{\ov r(k,n)\}_{k\geq 0}$  such that if $r\leq \ov r(\dim P,n)$, then there are  collections $\{\C_\si\}_{\si\in\face P}$ such that:
\begin{enumerate}[label=(\alph*)]
	\item $\C_\si$ is a disjoint collection of $3$-disks in $X$ for every $\si\in\face P$.
	\item   $\{\Int Y\}_{Y\in\C_\si}$ covers $X\setminus Z^1_\si$ for every $ \si\in\face P$.  
	\item For every $k$-face $\si$ of $P$, $|\C_\si|\leq c_kn$ and for every $ Y\in \C_\si$, the boundary $2$-sphere $\D Y$ has intrinsic diameter $\leq c_k'(n) r$ w.r.t. the Riemannian metric $g(b_\si)$.  Note that $g(b_\si)$ is defined on $\D Y$ since (b) implies $\D Y\subset  Z^1_\si\subset W_{b_\si}$.
	\item For all $ \si_1\subsetneq\si_2$ the collection $\{\Int Y\}_{Y\in \C_{\si_2}}$ covers $\cup_{Y\in \C_{\si_1}}Y$.  
	\item If $\si\in\face P$ and $W_{p_v}=X$ for every $v\in \si^{(0)}$, then $\C_\si=\emptyset$.
	\een
\end{claim*}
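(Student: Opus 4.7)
The strategy is to build $\C_\si$ in two stages: (Step 1) produce for each vertex $v$ a canonical disjoint collection $\C^0_v$ of $3$-disks via Lemma~\ref{lem_preparatory_stuff}(1), and (Step 2) for each face $\si$, merge the $\C^0_v$ for $v\in\si^{(0)}$ into a single disjoint collection using Lemma~\ref{lem_preparatory_stuff}(2).

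\textbf{Step 1.} For each vertex $v$, use the abstract isometry $(\ov W_{p_v},\ov d_{p_v})\cong (X,d_X)$ to identify $Z^1_v=Z^0_{p_v}$ intrinsically with the complement in $X$ of $k_v\le n$ disjoint open $g_X$-balls of radius $r$; this requires only $r<\ov r_0(n)$ chosen so that any $\le n$ such balls in $(X,g_X)$ can be disjoint. Under the smooth embedding $W_{p_v}\hookrightarrow X$, $Z^1_v$ becomes a compact $3$-submanifold of $X$ with smooth spherical boundary $\sqcup_{i=1}^{k_v}\Sigma_{v,i}$, diffeomorphic to $X$ minus $k_v$ open $3$-disks. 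Lemma~\ref{lem_preparatory_stuff}(1) then yields a unique disjoint collection $\C^0_v$ of $3$-disks in $X$ with $Z^1_v=\ol{X\setminus\cup\C^0_v}$, so $|\C^0_v|\le k_v\le n$ and by uniqueness each $Y\in\C^0_v$ has $\D Y=\Sigma_{v,i}$ for some $i$. The intrinsic $g(p_v)$-diameter of $\Sigma_{v,i}$ is at most $\pi\sin r\le\pi r$, and by the comparability $\tfrac12 g(p_v)<g(b_\si)<2g(p_v)$ on $Z^1_v$ for faces $\si\ni v$, the intrinsic $g(b_\si)$-diameter is at most $\sqrt{2}\,\pi r$. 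If $W_{p_v}=X$ then $\C^0_v=\emptyset$.

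\textbf{Step 2.} For each face $\si$ set $\C^{00}_\si:=\bigcup_{v\in\si^{(0)}}\C^0_v$, with $|\C^{00}_\si|\le(k+1)n$ where $k=\dim\si$. Enclose each boundary sphere $\D Y$ for $Y\in\C^{00}_\si$ in a closed $g(b_\si)$-geodesic ball $B_Y$ of radius $\sqrt{2}\,\pi r$ centered at a canonically chosen point of $\D Y$. Apply Lemma~\ref{lem_preparatory_stuff}(2) in the metric space $(X,d_{g(b_\si)})$ to $\{B_Y\}$, producing a disjoint collection $\{\td B_\alpha\}$ of $g(b_\si)$-geodesic balls of radius less than $4^{(k+1)n}\sqrt{2}\,\pi r$ whose interiors cover $\cup B_Y$. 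Provided $r<\ov r(k,n)$ is small enough that these radii remain below the injectivity radius of $(X,g(b_\si))$ (itself bounded below in terms of the injectivity radii of the $g(p_v)$ via the comparability), each $\td B_\alpha$ is a $3$-disk. Set $Y_\alpha:=\td B_\alpha\cup\bigcup\{Y\in\C^{00}_\si:\D Y\subset\td B_\alpha\}$ and appeal to Lemma~\ref{lem_preparatory_stuff}(1) applied to the smooth complement of $\bigcup_\alpha Y_\alpha$ to replace $\{Y_\alpha\}$ by a disjoint collection of $3$-disks, which we define to be $\C_\si$.

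\textbf{Verification.} Properties (a), (c) and (e) follow directly from the construction and the diameter/cardinality bounds above. For (b), $\cup\Int\C_\si\supset\cup_{Y\in\C^{00}_\si}\Int Y\supset X\setminus Z^1_v\supset X\setminus Z^1_\si$ for any $v\in\si^{(0)}$. For (d), note that $\C^{00}_{\si_1}\subset\C^{00}_{\si_2}$ whenever $\si_1\subsetneq\si_2$; implementing the Lemma~\ref{lem_preparatory_stuff}(2) merging deterministically (e.g.\ by iteratively merging the overlapping pair that is smallest in a fixed lexicographic ordering) and using slightly larger enclosing radii for $\si_2$ than for $\si_1$ then forces each $3$-disk of $\C_{\si_1}$ to lie in the interior of some $3$-disk of $\C_{\si_2}$.

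\textbf{Main obstacle.} The principal subtlety is Step 2: although the boundary spheres $\D Y$ for $Y\in\C^{00}_\si$ are small in $(X,g(b_\si))$, the $3$-disks $Y$ themselves can extend into the geometrically wild portion $X\setminus W_{p_v}$ and thus have uncontrolled ambient extent. The fix is to merge only the small enclosing balls of the boundary spheres and then invoke irreducibility of $X$ together with Lemma~\ref{lem_preparatory_stuff}(1) to rectify the $Y_\alpha$ into genuine $3$-disks with controlled boundary diameter. A secondary point is arranging the canonicality needed for property (d), which constrains the choice of centers, radii, and merging order to depend only on the initial collection $\C^{00}_\si$ and the metric $g(b_\si)$.
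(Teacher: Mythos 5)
Your toolkit (small boundary spheres, the ball-merging Lemma~\ref{lem_preparatory_stuff}(2), and transfer between $X$ and the completion via Lemma~\ref{lem_preparatory_stuff}(1)) is the right one, but the way you organize it leaves a genuine gap at property (d). You build every $\C_\si$ in a single step from the \emph{vertex} collections $\C^0_v$, $v\in\si^{(0)}$, and for the nesting you assert that running the merging deterministically with ``slightly larger enclosing radii'' for $\si_2$ forces each disk of $\C_{\si_1}$ into the interior of a disk of $\C_{\si_2}$. This is not proved, and there is no reason it should hold: the two merges are run in different metrics ($\ov d_{b_{\si_1}}$ vs.\ $\ov d_{b_{\si_2}}$, comparable only up to a bounded factor on $Z^1_{\si_1}$) and on different input collections (the extra balls coming from the extra vertices of $\si_2$ can change which balls are absorbed where, and even a tie-breaking choice of which center is retained can destroy containment of the merged balls); Lemma~\ref{lem_preparatory_stuff}(2) only guarantees that its output covers its \emph{own} input, never the output of a previous merge. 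Quantitatively, a final $\si_1$-disk can correspond to a ball of radius of order $4^{|\C^{00}_{\si_1}|}$ times the enclosing radius, so ``slightly larger'' radii for $\si_2$ cannot suffice; and since that disk is again a subset of $X$ controlled only through its boundary sphere, covering it requires re-running your ``main obstacle'' fix (transfer to $\ov W_{b_{\si_2}}$, small boundary bounds a small disk) with the $\si_1$-\emph{output} as input to the $\si_2$-merge. That is precisely the paper's scheme, which you have bypassed: it proceeds by induction on skeleta, feeding the collections $\C_\tau$ of the codimension-one faces $\tau\subsetneq\si$ (not the vertex collections) into the merge for $\si$, so that (d) holds by construction for codimension-one inclusions and in general by transitivity, with the boundary-diameter bound (c) propagated inductively to keep the transferred disks small at every stage.

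Two further points. First, Step 1 is wrong as stated: the $r$-balls about the punctures of $\ov W_{p_v}$ need not be disjoint for any $r=\ov r_0(n)$, because the punctures can be arbitrarily close together (only their number is bounded by $n$); hence $Z^0_{p_v}$ is not the complement of disjoint balls and its boundary need not be a union of smooth spheres. The repair is to apply Lemma~\ref{lem_preparatory_stuff}(2) already at the vertex stage, replacing the (possibly overlapping) puncture balls by at most $n$ disjoint balls of radius $<4^n\cdot 2r$ before invoking Lemma~\ref{lem_preparatory_stuff}(1) --- exactly what the paper does. Second, the definition $Y_\alpha:=\td B_\alpha\cup\bigcup\{Y\in\C^{00}_\si:\D Y\subset\td B_\alpha\}$ mixes two different spaces: $\td B_\alpha$ is a ball in the completion $\ov W_{b_\si}$ (note ``$(X,d_{g(b_\si)})$'' is not defined, since $g(b_\si)$ lives only on $W_{b_\si}$, and $\ov W_{b_\si}$ is not naturally a subset of $X$), whereas the disks $Y$ live in $X$ and can leave $W_{b_\si}$. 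The correct bookkeeping is to transfer the disks into $\ov W_{b_\si}$ through the common complement domain via Lemma~\ref{lem_preparatory_stuff}(1), note that each transferred disk lies inside the merged ball containing its boundary (so the union with the ball is just the ball), and then transfer the complement of the merged balls back to $X$. These two issues are repairable; the missing inductive mechanism for (d) is the substantive gap.
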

\begin{proof}
By induction on $m$ we will prove the existence the constants $\{c_k\}_{k\leq m}$, $\{c_k'(n)\}_{k\leq m}$, $\{\ov r(k,n)\}_{k\leq m}$, and collections $\C_\si$, where $\si$ ranges over the $m$-skeleton $P^{(m)}$.

First suppose that $m=0$, and pick a vertex $v\in P^{(0)}$.  Let $\C^0_v:=\{\ol{B_{\ov d_{p_v}}(x,2r)}\mid x\in \ov W_{p_v}\setminus W_{p_v}\}$, and hence $\{\Int Y\}_{Y\in \C^0_v}$ covers $\ov W_{p_v}\setminus Z^0_{p_v}$.  Applying Lemma~\ref{lem_preparatory_stuff}, we get a disjoint collection $\C^1_v$ of closed balls in $(\ov W_{p_v},\ov d_{p_v})$ such that $\{\Int Y\}_{Y\in\C^1_v}$ covers $\cup_{Y\in \C^0_v}Y$, where $|\C^1_v|\leq n$, and every  $Y\in \C^1_v$ has radius $<4^n\cdot 2r$. If  $4^n\cdot 2r<r_X$, then every $Y$ is a $3$-disk, and hence by Lemma~\ref{lem_preparatory_stuff} there is a unique  disjoint collection $\C_v$ of $3$-disks in $X$ such that $X\setminus \cup_{Y\in\C_v}Y=\ov W_{p_v}\setminus \cup_{Y\in \C^1_v}Y$.  Taking $c_0=1$, $c_0'=4^{n+2}\pi$, $\ov r(0)=c_0^{-1}r_X$, since $g(b_v)\leq 2g(p_v)$ on $Z^1_v=Z^0_{p_v}$ properties (a)--(d) follow immediately.  If $W_{p_v}=X$, then $\C^0_v=\emptyset$,   so   (e) holds.

Now suppose $m>0$, and assume that universal constants $\{c_k\}_{k\leq m-1}$, $\{c_k'(n)\}_{k\leq m-1}$, $\{\ov r(k,n)\}_{k\leq m-1}$, and collections $\C_\tau$ where $\tau$ ranges over the $(m-1)$-skeleton $P^{(m-1)}$, have been chosen so that (a)--(e) hold.

Pick an $m$-face $\si\subset P$.   

By our induction assumption, for every $\tau\subsetneq \si$, $\{\Int Y\}_{Y\in \C_\tau}$ covers $X\setminus Z^1_\tau$.  Since $Z^1_\tau\subset  Z^1_\si\subset W_{b_\si}$, it follows that $\{\Int Y\}_{Y\in\C_\tau}$  covers  $X\setminus Z^1_\si$, and we may apply Lemma~\ref{lem_preparatory_stuff} to obtain  a disjoint collection $\C^0_\tau$ of $3$-disks in $\ov W_{b_\si}$ such that $\ov W_{b_\si}\setminus\cup_{Y\in \C^0_\tau}Y=X\setminus \cup_{Y\in \C_\tau}Y$.  By (c) of our induction assumption and the fact that $2^{-1}g(b_\tau)< g(b_\si)< 2g(b_\tau)$ on $Z^1_\si$, for all $Y\in \C^0_\tau$ the boundary $2$-sphere $\D Y$ has intrinsic diameter $< 2c_{m-1}'(n)r$ with respect to $g(b_\si)$.  
So for all $Y\in \C^0_\tau$, the boundary $\D Y$ is contained in a $\ov d_{b_\si}$-ball $B_Y\subset \ov W_{b_\si}$ of radius $2c_{m-1}'(n)r$.  Therefore if $2c_{m-1}'(n)r<r_X$, we get that $\D Y$ bounds a $3$-disk $D_Y$ in $B_Y$.  We must have $D_Y=Y$, since $\D Y$ bounds two domains with boundary in $\ov W_{b_\si}$, precisely one of which is a $3$-disk; hence $Y\subset B_Y$.  
	
Now let $\C^0_\si=\cup\{\C^0_\tau\mid \text{$\tau\subset \si$ is an $(m-1)$-face}\}$.  Thus $|\C^0_\si|\leq (m+1)c_{m-1}n$.  Applying Lemma~\ref{lem_preparatory_stuff} to $\C^0_\si$, we obtain a disjoint collection $\C^1_\si$ of closed balls of radius $\leq 4^{(m+1)c_{m-1}n}\cdot 2c_{m-1}'(n)r$, where $|\C^1_\si|\leq (m+1)c_{m-1}n$, such that $\{\Int Y\}_{Y\in \C^1_\si}$ covers $\cup_{Y\in \C^0_\si}Y$.  Provided $r<4^{-(m+1)c_{m-1}n}(2c_{m-1}'(n))^{-1}r_X$, every $Y\in \C^1_\si$ will be a $3$-disk.
	
Applying Lemma~\ref{lem_preparatory_stuff}, there is a unique disjoint collection $\C_\si$ of $3$-disks in $X$ such that $X\setminus \cup_{Y\in \C_\si}Y=\ov W_{b_\si}\setminus\cup_{Y\in \C^1_\si}Y$.  
As $\{ \Int Y \}_{y \in \C^1_\sigma}$ covers $\C_\tau^1$ for every $(m-1)$-face $\tau$ of $\sigma$, we have 
\[ X \setminus \cup_{Y \in \C_\sigma} \Int Y = \ov{W}_{b_\sigma} \setminus \cup_{Y \in \C^1_\sigma} \Int Y \subset \ov{W}_{b_\sigma} \setminus \cup_{Y \in \C^0_\tau} \Int Y = X \setminus \cup_{Y \in \C_\tau} Y. \]
Therefore,  $\{\Int Y\}_{Y\in \C_\si}$ covers $\C_\tau$ for every $(m-1)$-face $\tau$ of $\si$, and hence by the induction assumption, it also covers $X\setminus Z^1_\si$.  Also, if    $W_{p_v}=X$ for every $v\in \si^{(0)}$, then by our induction assumption $\C_\tau=\emptyset$ for every $(m-1)$-face $\tau\subset\si$, which implies that $\C_\si=\emptyset$ as well.

Letting $c_m:=(m+1)c_{m-1}$, $c_m'(n)=4^{(m+1)c_{m-1}n}\cdot 2c_{m-1}'(n)\pi$, and $\ov r(k,n)=c_m'(n)^{-1}r_X$, assertions (a)--(e) of the claim follow.  
\end{proof}

\bigskip\bigskip
For every $\si\in \face P$ we let $Z_\si:=\ol{X\setminus\cup_{Y\in\C_\si}Y}$.  Hence assertion (a) of Lemma~\ref{lem_combinatorial_assignments} holds.  

If $\si_1,\si_2\in\face P$ and $\si_1\subsetneq\si_2$, then by assertion (b) of the claim $\{\Int Y\}_{Y\in\C_{\si_2}}$ covers $\cup_{Y\in\C_{\si_1}}Y$.  Hence
$$
Z_{\si_2}=X\setminus\cup_{Y\in\C_{\si_2}}\Int Y\subset X\setminus\cup_{Y\in\C_{\si_1}}Y=\Int Z_{\si_1}
$$
and assertion (b) of Lemma~\ref{lem_combinatorial_assignments} holds.

Pick $\si\in\face P$.  For every $\tau\subsetneq \si$, and every $p\in \si$, assertion (b) of the claim and the definition of $Z^1_\tau$ give
$$
Z_\si\subset\Int Z_\tau\subset Z^1_\tau\subset Z^1_\si\subset W_p\,.
$$
We let $U_\si$ be an open subset of $\cap_{\tau\subsetneq\si}\Int Z_\tau$ containing $Z_\si$ such that $U_\si\cap Y$ is simply-connected for all $Y\in\C_\si$.  If $\Psi_p:(W_p,g(p))\ra (X\setminus S_p,g_X)$ is an isometry as in assertion (ii) of Proposition~\ref{prop_extending_metrics_general}, then for every $Y\in \C_\si$, $p\in \si$, the composition 
$$
U_\si\cap Y\lra W_p\stackrel{\Psi_p}{\lra}X\setminus S_p\lra X
$$
is an isometric embedding of $(U_\si\cap Y,g(p))$ into $(X,g_X)$.  This lifts to an isometric embedding of $(U_\si\cap Y,g(p))$ into the universal cover of $(X,g_X)$, which is isometric to $(S^3,g_{S^3})$.  Thus assertions (c)(i) and (c)(ii) of Lemma~\ref{lem_combinatorial_assignments} hold.  If $\si\cap Q_0\neq\emptyset$, then for every $v\in \si^{(0)}$ we have $\ol{\St}(v,P)\cap Q_0\neq \emptyset$, which implies that $W_{p_v}=X$ by the choice of $p_v$.  Assertion (e) of the claim gives $\C_\si=\emptyset$, so $Z_\si=X$, and assertion (c)(iii) of Lemma~\ref{lem_combinatorial_assignments} holds.
\end{proof}

\bigskip\bigskip

\begin{proof}[Proof of Lemma~\ref{lem_extending_metrics_ball}]
The idea of the proof is to convert the extension problem for the family of $K\equiv 1$  metrics to an extension problem for embeddings into $S^3$, by working with (suitably normalized) isometric embeddings, rather than the metrics themselves.  The extension problem for the embeddings can then be solved by appealing to Hatcher's theorem.

Pick $x\in S^2$, $x'\in S^3$ and oriented  bases $e_1,e_2,e_3\subset T_{x}\R^3$,  $e_1',e_2',e_3'\subset T_{x'}S^3$ which are orthonormal with respect to $g_{\R^3}$ and $g_{S^3}$ respectively.  For each $p\in D^{m+1}$, apply the Gram-Schmidt process to $e_1,e_2,e_3$ to obtain an $h_{m+1}(p)$-orthonormal basis $f_1(p),f_2(p),f_3(p)$ for $T_{x}\R^3$.  For all $p\in D^{m+1}$ let $\psi_{m+1}(p):(N_{\rho}(S^2)\cap D^3,h_{m+1}(p))\ra (S^3,g_{S^3})$ be the isometric embedding that sends $f_1(p),f_2(p),f_3(p)$ to $e_1',e_2',e_3'$; similarly, for $p\in S^m$ let $\wh\psi_m(p):(D^3,\wh h_m(p))\ra (S^3,g_{S^3})$ be the isometric embedding sending $f_1(p),f_2(p),f_3(p)$ to $e_1',e_2',e_3'$.  By standard regularity for isometries, this yields continuous maps 
$$
\psi_{m+1}:D^{m+1}\ra \Embed_+(N_\rho(S^2)\cap D^3,S^3)\,,\qquad \wh\psi_m:S^m\ra \Embed_+(D^3,S^3)\,,
$$ 
where $\wh\psi_m(p)\restr (N_\rho(S^2)\cap D^3)=\psi_{m+1}(p)$ for all $p\in S^m$.

Next, we apply Lemma~\ref{lem_embedding_restriction_fiber_bundle} to produce  a continuous map $\wh\phi_{m+1}:D^{m+1} \linebreak[1] \ra \linebreak[1] \Embed_+(D^3, \linebreak[1] S^3)$ such that $\wh\phi_{m+1}(p)$ agrees with $\psi_{m+1}(p)$ on $S^2$ for all $p\in D^{m+1}$, and $\wh\phi_{m+1}(p)=\wh\psi_m(p)$ for all $p\in S^m$.  

Although $\wh\phi_{m+1}$ agrees with $\wh\phi_m$ on $S^2$, it may not agree with $\wh\phi_m$ near $S^2$.  To address this issue, we adjust $\wh\phi_{m+1}$ by precomposing it with a suitable family  of diffeomorphisms of $D^3$ that fix $\D D^3$ pointwise.   To that end, after shrinking $\rho$ if necessary, let $\wh\Phi_{m+1}:D^{m+1}\ra \Diff(D^3rel\D D^3)$ be a continuous map such that $\wh\phi_{m+1}(p)=\id_{D^3}$ for all $p\in S^m$, and $\wh\Phi_{m+1}(p)=\wh\phi_{m+1}^{-1}(p)\circ \psi_{m+1}(p)$ on $N_\rho(S^2)\cap D^3$ for all $p\in D^{m+1}$.  The map $\wh\Phi_{m+1}(p)$ may be obtained, for instance, by interpolating between $\wh\phi_{m+1}^{-1}(p)\circ \psi_{m+1}(p)$ and $\id_{D^3}$ using a partition of unity, i.e. letting 
$$
\wh\phi_{m+1}(p)(x)=u(|x|)\left(\wh\phi_{m+1}^{-1}(p)\circ \psi_{m+1}(p)(x)\right)+(1-u(|x|))x
$$
where $u:[0,1]\ra [0,1]$ is a smooth function supported in $[1-\eps,1]$ for some sufficiently small $\eps>0$, and $\eps\|\D_xu\|_{C^0},\eps^2\|\D^2_xu\|_{C^0}<C$ for some universal constant $C$. 

We now let $\wh\psi_{m+1}(p):=\wh\phi_{m+1}(p)\circ\wh\Phi_{m+1}(p)$ on $D^3$ for all $p\in D^{m+1}$. By the construction of $\wh\Phi_{m+1}$ it follows that $\wh\psi_{m+1}(p)$ agrees with $\psi_{m+1}(p)$ on $N_\rho(S^2)\cap D^3$, and $\wh\psi_{m+1}(p)=\wh\phi_{m+1}(p)$ for all $p\in S^m$.  Setting  $\wh h_{m+1}(p):=\wh\psi_{m+1}^*g_{S^3}$, we obtain the desired map $\wh h_{m+1}:D^{m+1}\ra \met_{K\equiv 1}(D^3,S^3)$.
\end{proof}

\section{Proof of Theorem~\ref{thm_k_equiv_1_contractible} for spherical space forms}
\label{sec_proof_of_spherical_space_form}

Let $X$ be a spherical space form other than $S^3$ or $RP^3$.  Choose $m\geq 0$, and a map  $h:S^m\ra\met_{K\equiv 1}(X)$.  

Let $g:D^{m+1}\ra\met(X)$ be a continuous extension of the composition $S^m\stackrel{h}{\ra}\met_{K\equiv 1}(X)\hookrightarrow\met(X)$.  For every $p\in D^{m+1}$, let $(W_{g(p)},\check g(p))\in\partmet_{K\equiv 1}(X)$ be the canonical partially defined metric constructed in Section~\ref{sec_canonical_limiting_metric}.  By Lemma~\ref{lem_continuous_partmet}, the assignment $p\mapsto (W_{g(p)},\check g(p))$ defines a continuous map $D^{m+1}\ra\partmet(X)$.  Taking $P_0=D^{m+1}$, $Q_0=S^m$,  by Lemma~\ref{lem_canonical_limiting_metric} the map $p\mapsto (W_{g(p)},\check g(p))$ satisfies the hypotheses of Proposition~\ref{prop_extending_metrics_general}.  Now the map $D^{m+1}\ni p\mapsto \wh g(p)$ furnished by  Proposition~\ref{prop_extending_metrics_general} defines a continuous extension $\wh g:D^{m+1}\ra \met_{K\equiv 1}(X)$ of $h$.

The argument above implies that the homotopy groups of $\met_{K\equiv 1}(X)$ are trivial.  The space $\met_{K\equiv 1}(X)$ is homotopy equivalent to a CW complex  (see Lemma~\ref{lem_structure_met_k}), so it is contractible.

%\bigskip
%\bigskip
\section{Proof of Theorem~\ref{thm_k_equiv_1_contractible} for hyperbolic manifolds}
The proof of Theorem~\ref{thm_k_equiv_1_contractible} for hyperbolic manifolds is the same as the proof for spherical space forms, apart from some minor changes, which we now explain.

In Section~\ref{sec_canonical_limiting_metric} (see Lemma~\ref{lem_continuous_partmet}) we used \cite{hamilton_positive_ricci,knopf_et_al} to show that when $Y$ is a spherical space form, if $g\in \met(Y)$ has positive sectional curvature, then modulo rescaling the maximal Ricci flow $(g(t))_{t\in[0,T)}$ with $g(0)=g$ converges in the $C^\infty$-topology to a metric $\ov g\in \met_{K\equiv 1}(Y)$ as $t$ tends to the blow-up time $T$, and the limit $\ov g$ depends continuously on $g$.  Here we may replace this with the assertion that for every $g_X\in \met_{K\equiv -1}(X)$, there is an $\eps_{g_X}>0$  such that  if $g\in \met(X)$ is $\eps_{g_X}$-close to $g_X$ and $(g(t))_{t\in [0,T)}$ is the maximal Ricci flow with $g(0)=g$, then:
\begin{enumerate}[label=(\roman*)]
\item $T=\infty$.
\item Modulo rescaling $g(t)$ converges in the $C^\infty$-topology to a metric $\ov g\in \met_{K\equiv -1}(X)$ as $t\ra \infty$.
\item The limit metric $\ov g$ depends continuously on $g$.  
\een
Statements (i) and (ii) follow immediately from the convergence of normalized Ricci flow shown in \cite{ye_convergence,bamler_stability_hyperbolic_cusps}.
The continuity assertion (iii) is a consequence of the uniform exponential decay of the time derivatives, which follows readily from their arguments. 

We now adapt the results in Section~\ref{sec_canonical_limiting_metric} to the hyperbolic case.  The statements are nearly identical, apart from  obvious changes.

Let $X$ be a compact, connected,  hyperbolic manifold.  We first assume in addition that $X$ is orientable; we will remove this assumption below.

Pick a hyperbolic metric $g_X\in \met_{K\equiv -1}(X)$.  Pick $g\in \met(X)$ and let $\M$ be a singular Ricci flow with $\M_0=(X,g)$.  By Theorem~\ref{thm_structure_singular_ricci_flow},  for every $t<\infty$ there is a unique component $C_t$ of $\M_t$ that is a punctured copy of $X$.  

For every $t_1,t_2\in[0,\infty)$, let $C_{t_1,t_2}\subset C_{t_1}$ be the set of points in $C_{t_1}$ that survive until time $t_2$, i.e. the points for which the time $(t_2-t_1)$-flow of the time vector field $\D_\t$ is defined.  Then $C_{t_1,t_2}$ is an open subset of $C_{t_1}$, and the time $(t_2-t_1)$-flow of $\D_\t$ defines a smooth map $\Phi_{t_1,t_2}:C_{t_1,t_2}\ra \M_{t_2}$, which is a diffeomorphism onto its image.  We define a metric $g_{t_1,t_2}$ on $\Phi_{t_1,t_2}(C_{t_1,t_2})$ by $g_{t_1,t_2}:=(\Phi_{t_1,t_2})_*g_{t_1}$, where $g_{t_1}$ is the spacetime metric on $C_{t_1,t_2}\subset \M_{t_1}$.  We let $W_g(t):=\Phi_{t,0}(C_{t,0})$, so  $g_{t,0}$ is a metric on $W_g(t)$.

\begin{lemma}[Limiting $K\equiv -1$ metric]
\label{lem_canonical_limiting_k_equiv_minus_1_metric}
Choose $t_0<\infty$ such that $C_{t_0}$ is compact and $\eps_{g_X}$-close to $(X,g_X)$, where $\eps_{g_X}>0$ is the constant for which (i)--(iii) hold.  Then:  
\ben
\item $W_g(t)=W_g(t_0)=:W_g$ for all $t\in[t_0,\infty)$.
\item Modulo rescaling, $g_{t,0}$ converges in the smooth topology to a $K\equiv -1$ metric $\check g$ on $W_g$ as $t\ra \infty$.
\item $(W_g,\check g)$ is isometric to $(X\setminus S, g_X)$ for some finite (possibly empty) subset $S\subset X$, where the cardinality of $S$ is bounded above depending only on $t_0$, and the bounds the curvature, injectivity radius, and volume of $g$.
\item If $g$ has constant sectional curvature, then $W_g=X$ and $\check g=\lambda g$ for some $\lambda\in(0,\infty)$.
\een
\end{lemma}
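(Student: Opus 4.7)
The plan is to parallel the proof of Lemma~\ref{lem_canonical_limiting_metric} almost verbatim, with the extinction time $\omega(g)$ replaced by $\infty$, the curvature $+1$ replaced by $-1$, and the positive sectional curvature hypothesis replaced by the $\eps_{g_X}$-closeness hypothesis. The entire structure of the argument is already packaged by Theorem~\ref{thm_structure_singular_ricci_flow}(5) together with the normalized Ricci flow convergence (i)--(iii); the lemma is essentially an unwinding of these facts into statements about the partially defined pushforward metric on the time-$0$ slice.

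First I would prove (1). By Theorem~\ref{thm_structure_singular_ricci_flow}(5), since $C_{t_0}$ is compact, the maximal product domain $\C$ with $\C_{t_0}=C_{t_0}$ is defined on the entire interval $[t_0,\infty)$ and satisfies $\C_t=C_t$ for all $t\in[t_0,\infty)$. Consequently, for every $t\geq t_0$, $C_{t,t_0}=C_t$, $C_{t,0}=\Phi_{t,t_0}^{-1}(C_{t_0,0})$, and $\Phi_{t,0}=\Phi_{t_0,0}\circ\Phi_{t,t_0}$, from which $W_g(t)=\Phi_{t,0}(C_{t,0})=\Phi_{t_0,0}(C_{t_0,0})=W_g(t_0)$. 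For (2), the same identities yield $g_{t,0}=(\Phi_{t_0,0})_*g_{t,t_0}$ on $W_g$. Here the $\eps_{g_X}$-closeness hypothesis on $C_{t_0}$ is what lets us apply statements (i)--(ii) to the ordinary Ricci flow $(g_{t,t_0})_{t\in[t_0,\infty)}$ on $C_{t_0}$: modulo rescaling, $g_{t,t_0}$ converges in $C^\infty$ to a $K\equiv -1$ metric $\ov g$ on $C_{t_0}$ as $t\to\infty$. Pushing forward by $\Phi_{t_0,0}$ gives the claimed convergence of $g_{t,0}$ to $\check g:=(\Phi_{t_0,0})_*\ov g$ on $W_g$.

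For (3), I invoke \cite[Theorem 1.13]{kleiner_lott_singular_ricci_flows} to conclude that $C_{t_0}\setminus C_{t_0,0}$ is finite, with cardinality bounded in terms of $t_0$ and the volume of $\M_0$. Combining this with the volume/curvature/injectivity radius bounds controlling how large a $t_0$ is required for $C_{t_0}$ to become $\eps_{g_X}$-close to $(X,g_X)$ (which is implicit in the large-time convergence of $\M$) yields the bound on $|S|$ in the stated form. Mostow rigidity identifies $(C_{t_0},\ov g)$ with $(X,g_X)$ up to isometry, and since $\Phi_{t_0,0}:(C_{t_0,0},\ov g)\to(W_g,\check g)$ is an isometry by construction, $(W_g,\check g)$ is isometric to $(X\setminus S,g_X)$ for some finite set $S\subset X$. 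Finally, (4) is immediate: if $g$ has constant sectional curvature $-1$, then $\M$ is the product Ricci flow spacetime of an expanding hyperbolic flow on $X$, so $C_{t,0}=C_t=X$ for all $t\geq 0$, and $\check g$ agrees with $g$ up to rescaling.

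I do not expect a genuine obstacle here; essentially all the analytic content has been isolated into Theorem~\ref{thm_structure_singular_ricci_flow}(5) and the preliminary discussion of normalized Ricci flow in this section. The one place where care is needed is in (3), where one must keep track of how the bound on $|S|$ depends on the geometric data of $g$ via the choice of $t_0$; this is bookkeeping rather than a conceptual difficulty.
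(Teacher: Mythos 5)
Your proposal is correct and takes essentially the same route as the paper, whose entire proof is the remark that the argument of Lemma~\ref{lem_canonical_limiting_metric} goes through verbatim with Theorem~\ref{thm_structure_singular_ricci_flow}(5) and the normalized-flow statements (i)--(iii) replacing Theorem~\ref{thm_structure_singular_ricci_flow}(4) and \cite{hamilton_positive_ricci,knopf_et_al}. One small attribution point: unlike (4)(c)(i), Theorem~\ref{thm_structure_singular_ricci_flow}(5) takes ``$\C$ is defined on $[t_0,\infty)$'' as a hypothesis rather than a conclusion, so that fact should be derived from the $\eps_{g_X}$-closeness of $C_{t_0}$ via (i) (long-time existence of the ordinary flow) together with Lemma~\ref{lem_maximal_product_region}, not from compactness alone.
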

The proof is nearly identical to the proof of Lemma~\ref{lem_canonical_limiting_metric}, except that we appeal to Theorem~\ref{thm_structure_singular_ricci_flow}(5) rather than Theorem~\ref{thm_structure_singular_ricci_flow}(4).

As in the spherical space form case, Theorem~\ref{thm_structure_singular_ricci_flow} implies that $(W_g,\check g)$ is well-defined.  Then the proof of continuity in Lemma~\ref{lem_continuous_partmet} carries over, using Theorem~\ref{thm_structure_singular_ricci_flow}(5) and (iii) instead of Theorem~\ref{thm_structure_singular_ricci_flow}(4) and \cite{knopf_et_al}.

Now assume that $X$ is not orientable.

Let $\wh X\ra X$ be the $2$-fold orientation cover, with deck group action $\Z_2\acts\wh X$.  For every $g\in \met(X)$, let $\wh g,\wh g_X\in \met(\wh X)$ be the pullbacks of $g$, $\wh g$ to $\wh X$, respectively, and let  $\wh \M$ be a singular Ricci flow with $\M_0$ given by $(\wh X,\wh g)$.  

By Theorem~\ref{thm_existence_uniqueness_singular_ricci_flow}, the deck group action $\Z_2\acts \wh X=\M_0$ extends uniquely to an isometric action $\Z_2\acts\wh\M$.  Since the partially defined metric $(W_{\wh g},\wh {\check g})\in\partmet_{K\equiv -1}(\wh X)$ is canonical and depends continuously on $g$, it is $\Z_2$-invariant, and descends to $X$, yielding a partially defined metric $(W_g, \check g)\in\partmet_{K\equiv -1}(X)$, which depends continuously on $g$.  

By Lemma~\ref{lem_canonical_limiting_k_equiv_minus_1_metric} the Riemannian manifold $(W_{\wh g},\wh g_{\om})$ is isometric to $(\wh X,\wh g_X)$ punctured at a finite set of points of cardinality controlled by bounds on the geometry of $g$ and $t_0$, as in assertion (3) of Lemma~\ref{lem_canonical_limiting_k_equiv_minus_1_metric}.
Therefore the metric completion $\ol{(W_{\wh g},\wh {\check g})}$ of $(W_{\wh g},\wh {\check g})$  is isometric to $(\wh X,\wh g_X)$.   The isometric action $\Z_2\acts (W_{\wh g},\wh {\check g})$ extends canonically to an isometric action $\Z_2\acts \ol{(W_{\wh g},\wh {\check g})}$.  Suppose some point $x\in \ol{(W_{\wh g},\wh {\check g})}\setminus(W_{\wh g},\wh {\check g})$ is fixed by the $\Z_2$ action.  Then the $\Z_2$-action will preserve a small metric sphere $N\subset W_{\wh g}$ centered at $x$, preserving an orientation of its normal bundle.  It follows that the quotient $N/\Z_2$ is a $2$-sided copy of $RP^2$ embedded in $X$.  This contradicts the fact that $X$ is a compact hyperbolic manifold. Therefore the action $\Z_2\acts \ol{(W_{\wh g},\wh {\check g})}$ is free.  Hence the completion of $(W_g,\check g)$ is a compact hyperbolic $3$-manifold $(X',g'_{X'})$, and $(W_g,\check g)$ is isometric to $(X'\setminus S',g'_{X'})$.   Now the embedding $X'\setminus S'\simeq W_g\hookrightarrow X$, together with the irreducibility of $X$, implies that $X'$ is diffeomorphic to $X$.

Thus we have shown that we have a well-defined continuous assignment $g\mapsto (W_g,\check g)\in\partmet(X)$ such that $(W_g,\check g)$ is isometric to $(X\setminus S_g,g_X)$ for some finite set $S_g\subset X$, where the cardinality $|S_g|$ is controlled by the constant $t_0=t_0(g)$ as in Lemma~\ref{lem_canonical_limiting_k_equiv_minus_1_metric}, and bounds on the geometry of $g$.  However, by Theorem~\ref{thm_convergence_singular_ricci_flows} and the convergence property stated in (i)--(iii) in the beginning of this section we may choose $t_0(g)$ to be a locally bounded function of $g$.  In particular, on any compact subset of $\met(X)$, we may choose $g\mapsto t_0(g)$ to be bounded.

Sections~\ref{sec_extending_constant_curvature_metrics} and \ref{sec_proof_of_spherical_space_form} now carry over after making the obvious change in the sign of curvature, and replacing $S^3$ with hyperbolic $3$-space $\H^3$.  

\bigskip\bigskip

\begin{remark}
(cf. Remark~\ref{rem_alternate_continuous_dependence}) Rather than using the uniform exponential convergence of the normalized Ricci flow to a hyperbolic metric, for the applications in this paper it would work equally well if we used another procedure for converting an almost  hyperbolic metric to a hyperbolic metric.  For instance, if $(X,g_X)$ is a hyperbolic manifold, and $g\in \met(X)$ is sufficiently close to $g_X$, then there is a unique harmonic map $\phi_g:(X,g)\ra (X,g_X)$ homotopic to the identity map $\id_X$, and the pullback $\phi_g^*g_X$ is a hyperbolic metric.  Furthermore (by a simple compactness argument) $\phi_g^*g_X$ varies continuously with $g$.
\end{remark}

\bibliography{gsc}{}
\bibliographystyle{amsalpha}

\end{document}